\newtheorem{thm}{Theorem}[section]
\newtheorem{cor}[thm]{Corollary}
\newtheorem{lem}[thm]{Lemma}
\newtheorem{prop}[thm]{Proposition}
\theoremstyle{definition}
\newtheorem{defin}[thm]{Definition}
\theoremstyle{remark}
\newtheorem{remarks}[thm]{Remarks}
\newtheorem{remark}[thm]{Remark}
\newtheorem{question}[thm]{Question}
\newtheorem{problem}[thm]{Problem}
\newtheorem{ex}[thm]{Example}
\newtheorem{exs}[thm]{Examples}
\numberwithin{equation}{section}
\newcommand{\delete}[1]{} % Comment out text.
\newcommand{\nt}{\noindent}
\newcommand{\sig}{\sigma}
\def\eps{{\varepsilon}}
\newcommand{\sk}{\vskip 0.3cm}
\newcommand{\ov}{\overline}
\newcommand{\ben}{\begin{enumerate}}
\newcommand{\een}{\end{enumerate}}
\newcommand{\bit}{\begin{itemize}}
\newcommand{\eit}{\end{itemize}}
\def\R {{\mathbb R}}
\def\N {{\mathbb N}}
\def\Z {{\mathbb Z}}
\def\Q {{\mathbb Q}}
\def\T {{\mathbb T}}
\def\RUC{{\hbox{RUC\,}^b}}
\def\H{{\mathrm{H}}}
\def\Iso{{\mathrm{Iso}}\,}
\def\Aut{{\mathrm Aut}\,}
\def\Homeo{{\mathrm{Homeo}}\,}
\def\diam{{\mathrm{diam}}}
\def\A{{\mathcal{A}}}
\def\B{{\mathcal{B}}}
\def\F{{\mathcal F}}
\def\PSL {{\mathrm{PSL}}}
\def\PGL {{\mathrm{PGL}}}
\def\Iso{\operatorname{Iso}}
\def\Asp{\operatorname{Asp}}
\def\RUC{\operatorname{RUC}}
\def\SUC{\operatorname{SUC}}
\def\UC{\operatorname{UC}}
\def\WAP{\operatorname{WAP}}
\def\Tame{\operatorname{Tame}}
\def\AP{\operatorname{AP}}
\def\QED{\nobreak\quad\ifmmode\roman{Q.E.D.}\else{\rm Q.E.D.}\fi}
\def\a{\alpha}
\def\Om{\Omega}
\def\s{\sigma}
\newcommand{\ga}{\gamma}
\newcommand{\del}{\delta}
\newcommand{\br}{\vspace{4 mm}}
\newcommand{\cls}{{\rm{cls\,}}}
\newcommand{\al}{\alpha}
\newcommand{\ep}{\varepsilon}
\newcommand{\lan}{\langle}
\newcommand{\ran}{\rangle}
\newcommand{\Ucal}{\mathcal{U}}
\newcommand{\OC}{\bar{\mathcal{O}}}
\newcommand{\sgn}{{\rm{sgn}}}
\newcommand{\card}{\rm{card\,}}
\newcommand{\proj}{{\rm{proj\,}}}
\begin{document}

\title[]
{More on tame dynamical systems}

%Authors
%    Information for first author
\author[]{Eli Glasner}
\address{Department of Mathematics,
Tel-Aviv University, Ramat Aviv, Israel}
\email{glasner@math.tau.ac.il}
\urladdr{http://www.math.tau.ac.il/$^\sim$glasner}

%    Information for second author
\author[]{Michael Megrelishvili}
\address{Department of Mathematics,
Bar-Ilan University, 52900 Ramat-Gan, Israel}
\email{megereli@math.biu.ac.il}
\urladdr{http://www.math.biu.ac.il/$^\sim$megereli}

\date{revised on 07/02/23}

\begin{abstract}
In this work, on the one hand, we survey and amplify old results concerning tame dynamical systems and, on the other, prove some new results and exhibit new examples of such systems. 
In particular, we study tame symbolic systems and establish a neat characterization of tame subshifts. We also provide sufficient conditions which ensure that certain coding functions are tame. Finally we discuss examples where certain universal dynamical systems 
associated with some Polish groups are tame.
\end{abstract}
 
\subjclass[2010]{Primary 37Bxx, 46-xx; Secondary 54H15, 26A45}

\keywords{Asplund space, entropy, enveloping semigroup,
fragmented function, non-sensitivity, null system,  Rosenthal space, Sturmian sequence,
subshift, symbolic dynamical system, tame function, tame system}

\thanks{This research was supported by a grant of Israel Science Foundation (ISF 668/13)}

\thanks{The first named author thanks the Hausdorff Institute at Bonn for the opportunity to participate in the Program ``Universality and Homogeneity" where part of this work was written, November 2013.}

\maketitle

\setcounter{tocdepth}{1}
\tableofcontents

\section{Introduction}

Tame dynamical systems were introduced by A. K\"{o}hler \cite{Ko} in 1995 
and their theory 
developed during the last decade in a series of works by several authors
(see e.g. \cite{Gl-tame,GM1,GM-rose,H,KL,Gl-str,Rom}).  
Recently, connections to other areas of mathematics like: Banach spaces, 
circularly ordered systems, 
substitutions and tilings, quasicrystals, cut and project schemes and even model theory and logic were established 
(see e.g. \cite{Auj,Ibar, Ch-Si} and the survey \cite{GM-survey} for more details).

Recall that for any topological group $G$ and any dynamical $G$-system $X$ (defined by a 
continuous homomorphism $j: G \to \H(X)$ into the group of homeomorphisms of the compact space $X$) 
the corresponding enveloping semigroup $E(X)$ 
was defined by Robert Ellis as the 
pointwise closure of the subgroup $j(G)$ of $\H(X)$ in the product space $X^X$. 
$E(X)$ is a compact right topological semigroup whose algebraic and topological structure often reflects properties of $(G,X)$ like almost periodicity (AP),
weak almost periodicity (WAP), distality,
hereditary nonsensitivity (HNS) and tameness, to mention a few. 
In the domain of symbolic dynamics WAP (and even HNS) systems are necessarily countable, and in these classes minimal tame subshifts are necessarily finite. 
In contrast there are 
many interesting symbolic (both minimal and non-minimal) tame systems which are not HNS. 
Sturmian subshifts is an important class of such systems. 

A metric dynamical $G$-system $X$ is tame if and only if 
every element $p \in E(X)$ of the enveloping semigroup $E(X)$ 
is a limit of a sequence of elements from $G$, \cite{GM1,GMU}, 
if and only if its enveloping semigroup $E(X)$ 
has cardinality at most $2^{\aleph_0}$ \cite{GM1,GM-AffComp}. 
For example, the enveloping semigroup of a Sturmian system has the form 
$E(X)=\T_{\T} \cup \Z$, the union of the ``double-circle" 
$\T_\T$ and $\Z$. 
Thus its cardinality is $2^{\aleph_0}$. 
Another interesting property of a Sturmian system $X$ is that both $X$ and $E(X)$ are 
circularly ordered dynamical systems. As it turns out all circularly ordered systems are tame, \cite{GM-c}.

Another characterization of tameness 
of combinatorial nature, via the notion of independence tuples, is due to Kerr and Li \cite{KL}. 
Finally, the metrizable tame systems are exactly those systems which admit 
a representation on a separable Rosenthal Banach space \cite{GM-rose}
(a Banach space is called \emph{Rosenthal} if it does not contain an isomorphic copy of $l_1$). 
As a by-product of the latter characterization we were able in \cite{GM-rose} to 
show that, e.g. for a Sturmian system, the corresponding representation must take place on a separable Rosenthal space $V$ with 
a non-separable dual, thereby proving the existence of such
a Banach space. The question whether such Banach 
spaces exist was a major open problem until the mid of 70's 
 (the first counterexamples were constructed independently by James, and Lindenstrauss and Stegall). 
For a survey of Banach representation theory for dynamical systems we refer the reader to \cite{GM-survey}. 

\sk
In Sections \ref{s:prelim} and \ref{s:classes} we review and amplify some basic results concerning tame systems, fragmentability, and independent families of functions. 
In section \ref{s:symb} we provide a new 
characterization of tame symbolic dynamical systems, 
Theorem \ref{subshifts}, 
and a combinatorial characterization of tame subsets $D \subset \Z$ 
(i.e., subsets $D$ such that the associated subshift $X_D\subset \{0,1\}^{\Z}$ is tame), 
Theorem \ref{tame subsets in Z}. 
In Section \ref{s:entropy} we briefly review results relating tameness to independence and entropy.

\sk

In Section \ref{s:tame-type} we  
study coding functions that yield tame dynamical systems.
A closely related task is to produce invariant 
families of real valued functions which do not 
contain independent infinite  sequences (\emph{tame families}). 
Theorem \ref{2811} gives some useful sufficient conditions for the tameness of families. 
%070223  
For instance, as a corollary of this theorem, we show in Theorem \ref{t:tame-typeOften}.3 that if
$G$ is a compact metric group and $f: X \to \R$ is a bounded
function with finitely many discontinuities, then $fG$ is a tame family.  
%% 
%For instance, as a corollary of this theorem, we show that if
%$X$ is a compact metric equicontinuous $G$-system and $f: X \to \R$ is a bounded
%function with countably many discontinuities, then $fG$ is a tame family.  

We also describe some old and new interesting examples of symbolic tame systems.
E.g. in Theorem \ref{sphere} we present a Sturmian-like extension
of a rotation on $\T^d$ whose enveloping semigroup has the form 
$E(X) = \Z \cup (\T^d \times \mathcal F)$, where $\mathcal F$ is the collection of
ordered orthonormal bases for $\R^d$.

\sk 

In Section \ref{s:order} we consider dynamical properties which are related to order preservation.
If $X$ is a compact space equipped 
with some kind of order $\le$, 
then subgroups of $\H_+(X, \le)$, the group of order preserving homeomorphisms of $X$, often have some
special properties. 
E.g. $\H_+(\T)$,
the group of orientation preserving homeomorphisms of the circle $\T$,
 is Rosenthal representable, Theorem \ref{t:CoisWRN}, 
and we observe in Theorem \ref{t:RP} that it is Roelcke precompact. 
The recipe described in Theorem \ref{t:multi} 
yields many tame coding functions for subgroups of $\H(\T)$. 
Considering $\Z^k$ or $\PSL_2(\R)$ as subgroups of $\H(\T)$ we obtain 
in this way tame Sturmian like $\Z^k$ and $\PSL_2(\Z)$ 
dynamical systems.

\br

Every topological group $G$ has a universal minimal system $M(G)$,
a universal minimal tame system $M_t(G)$, which is the largest tame $G$-factor of $M(G)$, and also a universal irreducible affine $G$-system $I\!A(G)$. 
In the final Section \ref{sec,int} we discuss some examples, where $M(G)$ and $I\!A(G)$ are tame. 
When $M(G)$ is tame, so that $M(G) = M_t(G)$, we say that $G$ is intrinsically tame.
Of course every extremely amenable group 
(i.e. a group with trivial $M(G)$) 
is intrinsically tame, and
in Theorem \ref{t:intrinsic} we show that the Polish groups $G=\H_+(\T)$
as well as the groups $\Aut(\mathbf{S}(2))$ and $\Aut(\mathbf{S}(3))$, of automorphisms of the circular directed graphs $\mathbf{S}(2)$ and 
$\mathbf{S}(3)$ respectively, are 
all intrinsically tame (but have nontrivial $M(G)$). 
When the universal system $I\!A(G)$ is tame we say that the group $G$ is 
convexly intrinsically tame.
Trivially every amenable group is convexly intrinsically tame, 
and every intrinsically tame group is convexly intrinsically tame.
We show here that
the group $\H_+(\T)$ is a nonamenable convexly intrinsically tame topological group. 
Also, every semisimple Lie group $G$ with finite center and no compact factors 
(e.g., $SL_n(\R)$) 
 is convexly intrinsically tame (but not intrinsically tame) 
 \footnote{An earlier version of this work is posted on the Arxiv (arXiv:1405.2588)}.

\section{Preliminaries}\label{s:prelim}

By a topological space we mean a Tychonoff (completely regular Hausdorff) space. 
The closure operator in topological spaces
will be denoted by $\cls$.  
A function $f: X \to Y$ is \emph{Baire class 1 function} if the inverse image $f^{-1}(O)$ of every open set $O
\subset Y$ is $F_\sigma$ in $X$, \cite{Kech}. 
For a pair of topological spaces $X$ and $Y$, $C(X,Y)$ is the  
set of continuous functions from $X$ into $Y$. 
We denote by $C(X)$ the Banach 
algebra of {\em bounded} continuous real 
valued functions even when 
$X$ is not necessarily compact.

All semigroups $S$ are assumed to be monoids, 
i.e., semigroups with a neutral element which will be denoted by $e$.  
A (left) {\em action} of $S$ on a space $X$ is a map $\pi : S \times X \to X$ such that $\pi(e,x) = x$ and
$\pi(st,x) = \pi(s,\pi(t,x))$ for every $s, t \in S$ and $x \in X$. We usually write simply 
$sx$ for $\pi(s,x)$.

An $S$-\emph{space} is a topological space $X$ equipped with a %separately 
continuous action $\pi: S \times X \to X$ of a 
topological semigroup $S$ on the space $X$. 
A compact $S$-space $X$ is called a \emph{dynamical $S$-system} and is denoted by $(S,X)$.  
Note that in \cite{GM-rose} and \cite{GM-AffComp} we deal with the more general case of separately continuous  actions. 
We reserve the symbol $G$ for the case where $S$ is a topological group. 
As usual, a continuous map $\a : X \to Y$ between two $S$-systems is called an {\em $S$-map}
or {\em a homomorphism}  
when $\a(sx)=s\a(x)$ for every $(s,x) \in S \times X$.  
For every function $f: X \to \R$ and $s \in S$ denote by $fs$ the composition $f \circ \tilde{s}$. That is, $(fs)(x):=f(sx)$.   

For every $S$-system $X$ we have a monoid homomorphism
$j: S \to C(X,X)$, $j(s)=\tilde{s}$,
where $\tilde{s}: X \to X, x \mapsto sx=\pi(s,x)$ is the {\em $s$-translation} ($s \in S$). 
The action is said to be \emph{effective} (\emph{topologically effective}) if $j$ is an injection 
(respectively, a topological embedding).

The {\em enveloping semigroup} $E(S,X)$ (or just $E(X)$)    
for a compact $S$-system $X$ is defined as the pointwise closure ${\cls}(j(S))$ 
of $\tilde{S}=j(S)$ in $X^X$. Then $E(S,X)$ 
is a right topological compact monoid; i.e.
for each $p \in E(X)$ right multiplication by $p$ is a continuous map.

By a \emph{cascade} on a compact space $X$ we mean a $\Z$-action $\Z \times X \to X$.
When dealing with cascades we usually write $(T,X)$ instead of $(\Z,X)$, 
where $T$ is the $s$-translation $X \to X$ corresponding to $s=1 \in \Z$ ($0$ acts as the identity).

\sk

\subsection{Background on fragmentability and tame families}
\label{s:Ban}

The following definitions provide natural generalizations of the fragmentability concept \cite{JR}. 

\begin{defin} \label{def:fr}
	Let $(X,\tau)$ be a topological space and
	$(Y,\mu)$ a uniform space.
	\ben
	\item \cite{JOPV,me-fr} 
	$X$ is {\em $(\tau,
		\mu)$-fragmented\/} by a 
	(typically, not continuous)
	function $f: X \to Y$ if for every nonempty subset $A$ of $X$ and every $\eps
	\in \mu$ there exists an open subset $O$ of $X$ such that $O \cap
	A$ is nonempty and the set $f(O \cap A)$ is $\eps$-small in $Y$.
	We also say in that case that the function $f$ is {\em
		fragmented\/}. Notation: $f \in {\mathcal F}(X,Y)$, whenever the
	uniformity $\mu$ is understood.
	If $Y=\R$ then we write simply ${\mathcal F}(X)$.  
	
	\item \cite{GM1}
	We say that a {\it family of functions} $F=\{f: (X,\tau) \to
	(Y,\mu) \}$ is {\it fragmented} 
	if condition (1) holds simultaneously for all $f \in F$. That is, 
	$f(O \cap A)$ is $\eps$-small for every $f \in F$.
	\item \cite{GM-rose}
	We say that $F$ is an \emph{eventually fragmented family} 
	if every infinite subfamily $C \subset F$ contains
	an infinite fragmented subfamily $K \subset C$.
	\een
\end{defin}

In Definition \ref{def:fr}.1 when $Y=X, f={id}_X$ and $\mu$ is a
metric uniformity, we retrieve the usual definition of
fragmentability (more precisely, $(\tau,\mu)$-fragmentability) in the sense of Jayne and Rogers \cite{JR}.
Implicitly it already appears in a paper of Namioka and Phelps \cite{NP}. 
%The topological concept of fragmentability comes from Banach space theory. 

\begin{lem} \label{r:fr1} \cite{GM1, GM-rose}
	\ben
	\item 
	It is enough to check the conditions of Definition \ref{def:fr} only for $\ep \in \gamma$ from 
	a {\it subbase} 
	%	(that is, the finite intersections of the elements of $\gamma$ form a base
	%	of the uniform structure $\mu$). 
	$\gamma$ of $\mu$ and for closed nonempty subsets $A \subset X$. 
	%!use the fact that if $O \cap cl(A))$ is nonempty then $O \cap A$ is also nonempty ... 
	\item If $f: (X,\tau) \to (Y,\mu)$ has 	
	\emph{a point of continuity property} PCP (i.e., for every closed
	nonempty $A \subset X$ the restriction $f_{|A}: A \to Y$ has a continuity point) 
	then it is fragmented. If $(X,\tau)$ is hereditarily Baire (e.g., compact, or Polish) and $(Y,\mu)$ is a pseudometrizable uniform space then $f$ is fragmented if and only if $f$ has PCP. 
	%280817 (for better understanding Thm 2.5 below) 
	So, in particular, for compact $X$, the set $\F(X)$ is exactly $B'_r(X)$ in the notation of \cite{Tal}. 

	\item
	If $X$ is Polish and $Y$ is a separable metric space then
	$f: X \to Y$ is fragmented iff $f$ is a Baire class 1 function (i.e., the inverse image of every open set is
	$F_\sigma$).
	\item Let $(X,\tau)$ be a separable metrizable space and $(Y,\rho)$ a
	pseudometric space. Suppose that $f: X \to Y$ is a fragmented onto map. Then $Y$ is separable. 
	%	\item When $Y$ is compact with its unique compatible uniformity $\mu$ then $p: X \to Y$ is fragmented if and only if
	%	$f \circ p: X \to \R$ has a point of continuity property for every $f \in C(Y)$.
	\een 
\end{lem}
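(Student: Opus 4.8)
The plan is to establish the two reductions in part (1) separately, since each is elementary fragmentation bookkeeping. For the reduction to closed sets, given an arbitrary nonempty $A \sbs X$ and $\eps \in \mu$, I would apply the fragmentation condition to the closure $\cls(A)$, obtaining an open $O$ with $O \cap \cls(A) \neq \emptyset$ and $f(O \cap \cls(A))$ $\eps$-small; since $O$ is open and meets $\cls(A)$ it already meets $A$, and $f(O \cap A) \sbs f(O \cap \cls(A))$ is $\eps$-small. For the reduction to a subbase $\gamma$, given $\eps \in \mu$ choose $\eps_1, \dots, \eps_n \in \gamma$ with $\eps_1 \cap \cdots \cap \eps_n \sbs \eps$ and iterate: setting $A_0 = A$ and, having produced a nonempty $A_k$, apply the $\eps_{k+1}$-condition to obtain an open $O_{k+1}$ with $O_{k+1} \cap A_k \neq \emptyset$ and $f(O_{k+1} \cap A_k)$ $\eps_{k+1}$-small, then put $A_{k+1} = O_{k+1} \cap A_k$. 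With $O = O_1 \cap \cdots \cap O_n$ the set $O \cap A = A_n$ is nonempty and, being contained in each $O_i \cap A_{i-1}$, has image that is $\eps_i$-small for every $i$, hence $\eps$-small.

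For part (2), the implication PCP $\imp$ fragmented is immediate from the closed-set reduction just proved: given closed nonempty $A$ and $\eps$, a continuity point $a$ of $f_{|A}$ supplies a relatively open neighborhood $O \cap A \ni a$ with $f(O \cap A)$ $\eps$-small. For the converse, under the hypotheses that $X$ is hereditarily Baire and $\mu$ is pseudometrizable, I would fix a countable base $\{\eps_n\}$ of $\mu$ and, for a closed nonempty $A$, set $U_n = \bigcup\{O \text{ open} : f(O \cap A) \text{ is } \eps_n\text{-small}\}$. Applying fragmentation to each nonempty set of the form $W \cap A$ with $W$ open shows that $U_n \cap A$ meets every relatively open subset of $A$, so the relatively open set $U_n \cap A$ is dense in $A$. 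As $A$ is a closed subspace of a hereditarily Baire space, it is Baire, so $\bigcap_n (U_n \cap A)$ is nonempty; any point in it is a continuity point of $f_{|A}$, because $\{\eps_n\}$ is a base. This yields PCP on every closed set.

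Statement (3) then combines (2) with the classical Baire characterization theorem. Since a Polish $X$ is hereditarily Baire and a separable metric $Y$ is in particular pseudometrizable, (2) identifies being fragmented with the point-of-continuity property on all closed subsets. The classical theorem (in its separable-metric-valued form, see \cite{Kech}) states precisely that for $X$ Polish and $Y$ separable metric this point-of-continuity property is equivalent to $f$ being Baire class $1$ in the sense that $f\obr(O)$ is $F_\sigma$ for every open $O$. Quoting this theorem closes the equivalence; I expect this external analytic input to be the only genuinely nontrivial ingredient of the whole lemma, everything else being formal manipulation of the fragmentation condition.

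For part (4), I would run a transfinite exhaustion at each scale. A separable metrizable $X$ is second countable, hence hereditarily Lindel\"of, so every strictly increasing chain of open sets is countable. Fix $n$ and the corresponding entourage $\eps$ coming from a $1/n$-ball. Put $A_0 = X$; given a nonempty closed $A_\al$, fragmentation yields an open $O_\al$ with $O_\al \cap A_\al \neq \emptyset$ and $f(O_\al \cap A_\al)$ $\eps$-small, and I set $A_{\al+1} = A_\al \setminus O_\al$, taking intersections at limit stages. The complements $X \setminus A_\al$ form a strictly increasing chain of open sets, so the process terminates at a countable ordinal with empty remainder; the countably many pieces $O_\al \cap A_\al$ cover $X$, and selecting one image point from each produces a countable $1/n$-net for $Y = f(X)$. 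The union over $n$ of these nets is then dense, so $Y$ is separable. The one delicate point here is the countable termination of the exhaustion, which is exactly where second countability of $X$ is used.
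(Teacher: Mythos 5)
Your proofs are correct; the paper itself gives no proof of this lemma (it is quoted from \cite{GM1, GM-rose}), and your arguments are exactly the standard ones used in that literature: the closure and iterated-subbase reductions for (1), the PCP argument plus the Baire-category argument with the dense relatively open sets $U_n\cap A$ for (2), the reduction of (3) via (2) to the classical Baire characterization theorem from \cite{Kech}, and the transfinite scale-by-scale exhaustion (with countable termination via hereditary Lindel\"ofness) for (4). The only point you leave implicit is the final sentence of (2) identifying $\F(X)$ with $B'_r(X)$ for compact $X$, but that is an immediate notational consequence of the PCP equivalence you established, so there is no gap.
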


For other properties of fragmented maps and fragmented families
we refer to \cite{N, JOPV, KM, me-fr, Me-nz, GM1, GM-rose, GM-tame}.  
Basic properties and applications of fragmentability in topological dynamics can be found in \cite{GM-rose, GM-survey, GM-tame}.

\sk  
\subsection{Independent sequences of functions}
\label{s:ind}

Let $\{f_n: X \to \R\}_{n \in \N}$ be a uniformly bounded sequence of functions on a \emph{set} $X$. Following Rosenthal \cite{Ro} we say that
this sequence is an \emph{$l_1$-sequence} on $X$ if there exists a real constant $a >0$
such that for all $n \in \N$ and choices of real scalars $c_1, \dots, c_n$ we have
$$
a \cdot \sum_{i=1}^n |c_i| \leq ||\sum_{i=1}^n c_i f_i||_{\infty}.
$$

A Banach space $V$ is said to be {\em Rosenthal} if it does
not contain an isomorphic copy of $l_1$, or equivalently, if $V$ does not contain a sequence which is equivalent to an $l_1$-sequence. 

A Banach space $V$ is an {\em Asplund\/} space if the
dual of every separable Banach subspace is separable. Every Asplund space is Rosenthal and every reflexive space is Asplund.

A sequence $f_n$ of	real valued functions on a set
$X$ is said to be \emph{independent} (see \cite{Ro,Tal}) if
there exist real numbers $a < b$ such that
$$
\bigcap_{n \in P} f_n^{-1}(-\infty,a) \cap  \bigcap_{n \in M} f_n^{-1}(b,\infty) \neq \emptyset
$$
for all finite disjoint subsets $P, M$ of $\N$.

%170917 adding:  bounded
\begin{defin} \label{d:tameF} 
	We say that a bounded family $F$ of real valued (not necessarily, continuous) functions on a set $X$ is {\it tame} if $F$ does not contain an independent sequence.
\end{defin}

Every bounded independent sequence is an $l_1$-sequence \cite{Ro}.  
%170917  
%\begin{ex} \label{ex:nontame} 
	The sequence of projections on the Cantor cube $$\{\pi_n: \{0,1\}^{\N} \to \{0,1\}\}_{n  \in \N}$$ 
	and the sequence of Rademacher functions
	$$\{r_n: [0,1] \to \R\}_{n \in \N}, \ \ r_n(x):=\sgn (\sin (2^n \pi x))$$
	both are independent (hence, nontame).  
%\end{ex}
\sk 
The following useful theorem synthesizes some known results.
It mainly is based on results of Rosenthal and Talagrand. 
%280817 [avoiding that guy] and van Dulst \cite[Theorem 3.11]{Dulst}. 
%280817 [I also added an explanation]  
The equivalence of (1), (3) and (4) is a part of \cite[Theorem 14.1.7]{Tal}
For the case (1) $\Leftrightarrow$ (2) note that every bounded independent sequence $\{f_n: X \to \R\}_{n \in \N}$ is 
an $l_1$-sequence (in the $\sup$-norm), \cite[Prop. 4]{Ro}. On the other hand, as the proof of \cite[Theorem 1]{Ro} shows, if $\{f_n\}_{n \in \N}$ has no independent subsequence then it has a pointwise convergent subsequence. Bounded pointwise-Cauchy sequences in $C(X)$ (for compact $X$) are weak-Cauchy as it follows by Lebesgue's theorem. Now Rosenthal's dichotomy theorem \cite[Main Theorem]{Ro} asserts that $\{f_n\}$ has no $l_1$-sequence.  
In \cite[Sect. 4]{GM-rose} we show why eventual fragmentability of $F$ can be included in this list (item (5)).  

\begin{thm} \label{f:sub-fr} 
	Let $X$ be a compact space and $F \subset C(X)$ a bounded subset.
	The following conditions are equivalent:
	\begin{enumerate}
		\item
		$F$ does not contain an $l_1$-sequence. 
		\item $F$ is a tame family (does not contain an independent sequence). 
		\item
		Each sequence in $F$ has a pointwise convergent subsequence in $\R^X$.
		\item 
		The pointwise closure ${\cls}(F)$ of $F$ in $\R^X$ consists of fragmented maps,
		that is,
		${\cls}(F) \subset {\mathcal F}(X).$
		\item $F$ is an eventually fragmented family.
	\end{enumerate}
\end{thm}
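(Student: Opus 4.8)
The plan is to establish the five equivalences by grouping them around three classical inputs: Rosenthal's $l_1$-dichotomy, Talagrand's characterization of relatively pointwise-compact families in $C(X)$, and the fragmentability criterion from Lemma \ref{r:fr1}. I would not attempt to reprove these deep theorems, but rather organize the elementary bridges between them, exploiting that $X$ is compact throughout.

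First I would treat $(1)\Leftrightarrow(2)$. The implication $(1)\Rightarrow(2)$ is immediate once we recall that every bounded independent sequence is automatically an $l_1$-sequence in the $\sup$-norm (Rosenthal's estimate \cite{Ro}): if $F$ contained an independent sequence it would contain an $l_1$-sequence, contradicting (1). For the converse $(2)\Rightarrow(1)$, suppose toward a contradiction that $F$ carries an $l_1$-sequence $\{f_n\}$. By (2) this sequence has no independent subsequence, so the combinatorial core of the proof of \cite[Theorem 1]{Ro} yields a pointwise convergent subsequence $\{f_{n_k}\}$. Being bounded and pointwise convergent on the compact space $X$, this subsequence is weak-Cauchy in $C(X)$ by the Lebesgue dominated convergence theorem (integration against the Radon measures in $C(X)^*$). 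But a subsequence of an $l_1$-sequence is again an $l_1$-sequence, and an $l_1$-sequence admits no weak-Cauchy subsequence; this contradiction gives $(2)\Rightarrow(1)$.

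Next, for $(1)\Leftrightarrow(3)\Leftrightarrow(4)$ I would invoke Talagrand's theorem \cite[Theorem 14.1.7]{Tal} directly. Condition (3) is precisely relative sequential compactness of $F$ in $\R^X$, and Rosenthal's dichotomy identifies this with the absence of an $l_1$-sequence, giving $(1)\Leftrightarrow(3)$. The equivalence with (4) --- that the pointwise closure $\cls(F)$ consists of fragmented maps --- is the content of Talagrand's characterization: the obstruction to $\cls(F)\subset\mathcal{F}(X)$ is exactly the presence of an independent (equivalently $l_1$) sequence, the fragmentedness of the closure being the topological shadow of relative pointwise compactness. Here Lemma \ref{r:fr1}(2) serves as the dictionary that lets one read fragmentedness as a point-of-continuity property on the compact space $X$. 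To fold in (5), eventual fragmentability, I would argue $(4)\Leftrightarrow(5)$ following the method of \cite{GM-rose}: the nontrivial direction extracts, from any infinite subfamily $C\subset F$, an infinite fragmented subfamily, converting the global statement (4) about $\cls(F)$ into the local, subfamily-wise statement (5), while the reverse implication follows since fragmentability is inherited under pointwise limits of fragmented families.

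The main obstacle is not in these bridging arguments, which are essentially bookkeeping, but in the fact that the whole scheme rests on two genuinely hard external results: the combinatorial heart of Rosenthal's $l_1$-theorem (the Ramsey-type extraction of either an independent or a pointwise convergent subsequence) and Talagrand's stability theory for pointwise-compact sets of measurable functions. The delicate point one must respect is the interplay between three different topologies --- the $\sup$-norm (for $l_1$), the weak topology of $C(X)$ (for weak-Cauchy sequences), and the pointwise topology of $\R^X$ (for fragmentability and sequential compactness) --- and verifying that on bounded subsets of $C(X)$ over a compact $X$ the transitions among them are justified, the crucial one being the passage from pointwise to weak convergence supplied by the Lebesgue theorem.
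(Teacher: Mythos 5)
Your proposal is correct and follows essentially the same route as the paper: the paper also obtains $(1)\Leftrightarrow(3)\Leftrightarrow(4)$ directly from \cite[Theorem 14.1.7]{Tal}, proves $(1)\Leftrightarrow(2)$ by combining Rosenthal's fact that bounded independent sequences are $l_1$-sequences with the extraction of a pointwise convergent subsequence from the proof of \cite[Theorem 1]{Ro}, upgraded to weak-Cauchy via Lebesgue's theorem, and delegates the inclusion of item (5) to \cite[Sect.~4]{GM-rose}. The only cosmetic difference is that the paper cites Rosenthal's dichotomy where you argue that an $l_1$-sequence has no weak-Cauchy subsequence, which is the same point.
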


Let $X$ be a topological space and $F \subset l_{\infty}(X)$ be a norm bounded family. 
Recall that $F$ has Grothendieck's {\em Double Limit Property} (DLP) on $X$ if for every sequence $\{f_n\} \subset F$ and every sequence
$\{x_m\} \subset X$ the limits 
$$\lim_n \lim_m f_n(x_m) \ \ \  \text{and} \ \ \ \lim_m \lim_n f_n(x_m)$$
are equal whenever they both exist.

The following examples are mostly reformulations of known results; 
the details can be found in \cite{GM-rose,GM-tame,Me-Helly}. 

\begin{exs} \label{ex:tame} \ 
	\ben  
	\item A Banach space $V$ is Rosenthal iff 
	every bounded subset $F \subset V$ is tame  (as a family of functions) on every bounded subset 
	$Y  \subset V^*$ of the dual space $V^*$, iff $F$ is eventually fragmented on $Y$.  
	%!!!!!	%iff \emph{(E. Saab and P. Saab \cite{SS})}
	%	Each $x^{**} \in V^{**}$ is a fragmented map when restricted to the
	%	weak${}^*$ compact ball $B^*$. Equivalently, $B^{**} \subset \F(B^*)$.
	\item A Banach space $V$ is Asplund iff every bounded subset $F \subset V$ is a fragmented family of functions on every bounded subset $Y \subset V^*$. 
	\item A Banach space is reflexive iff 
	every bounded subset $F \subset V$ has DLP on every bounded subset 
	$X \subset V^*$.
	\item  ((DLP) $\Rightarrow$ Tame)  
	Let $F$ be a bounded family of real valued (not necessarily continuous) functions on a set $X$ such that $F$ has %Grothendieck's 
	DLP. Then $F$ is tame.  
	\item
	The family $\Homeo [0,1]$, of all self homeomorphisms of $[0,1]$, 
	is tame (but does not have the DLP on $[0,1]$). 
	\item Let $X$ be a circularly (e.g., linearly) ordered set. Then any bounded family $F$ of 
	%order preserving ....there is no independent pair of functions in $F$. 
	real functions with bounded total variation is tame. 
	\een
\end{exs}

%170917 adding (2) 
Note that in (1), (2) and (3) the converse statements are true; as it follows from results of \cite{GM-tame} 
every bounded tame (DLP) family $F$ on $X$ can be represented 
on a Rosenthal (Asplund, reflexive) Banach space. 
Recall that a representation of $F$ on %a Rosenthal (reflexive) 
a Banach space $V$ consists of
a pair $(\nu,\a)$ of bounded maps $\nu: F \to V, \ \alpha: X \to V^*$ (with weak-star continuous $\a$) such that 
$$
f(x)= \langle \nu(f), \a(x) \rangle
\ \ \ \forall \ f \in F, \ \ \forall \ x \in X.
$$
In other words, the following diagram commutes
$$\xymatrix{ F \ar@<-2ex>[d]_{\nu} \times X
	\ar@<2ex>[d]^{\a} \ar[r]  & \R \ar[d]^{id } \\
	V \times V^* \ar[r]  &  \R }
$$

%Basic definitions and results about Banach representations of dynamical systems can be found in \cite{Me-nz,GM1,GM-rose,GM-survey,GM-tame}. 

\subsection{More properties of fragmented families}

Here we discuss a general principle:
the fragmentability of a family of continuous maps defined on a compact space is ``countably-determined". The following theorem 
is inspired by results of Namioka and can be deduced, after some reformulations, 
from \cite[Theorems 3.4 and 3.6]{N}. See also \cite[Theorem 2.1]{CNO}.

\begin{thm} \label{t:countDetermined}
	Let $F=\{f_i: X \to Y\}_{i \in I}$ be a
	bounded
	family of \textbf{continuous} maps from a compact (not necessarily metrizable)
	space $(X,\tau)$ into a pseudometric space $(Y,d)$.
	The following conditions are equivalent:
	\ben
	\item
	$F$ is a fragmented family of functions on $X$.
	\item
	Every \emph{countable} subfamily $K$ of $F$ is fragmented.
	\item
	For every countable subfamily $K$ of $F$ 
	the pseudometric space $(X,\rho_{K,d})$ is separable,
	where
	$
	\rho_{K,d}(x_1,x_2):=\sup_{f \in K} d(f(x_1),f(x_2)).
	$
	\een
\end{thm}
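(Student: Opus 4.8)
The plan is to establish the cycle of implications $(1)\Rightarrow(2)\Rightarrow(3)\Rightarrow(1)$. The implication $(1)\Rightarrow(2)$ is immediate from Definition \ref{def:fr}: if $F$ is fragmented then for any closed nonempty $A\subseteq X$ and any $\eps$ a single open set $O$ witnesses that $f(O\cap A)$ is $\eps$-small simultaneously for \emph{all} $f\in F$, hence in particular for all $f$ lying in any (countable) subfamily $K$. So every subfamily, and a fortiori every countable one, is fragmented.

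For $(2)\Rightarrow(3)$ I would fix a countable $K=\{f_n\}\subseteq F$ and pass to the continuous evaluation map $\Phi\colon X\to Y^{\N}$, $\Phi(x)=(f_n(x))_n$, with compact image $Z=\Phi(X)$. Equip $Y^{\N}$ with the product topology $\tau_p$, under which $Z$ is compact \emph{metrizable}, and with the supremum pseudometric $\sigma$. One checks that $\rho_{K,d}(x,x')=\sigma(\Phi x,\Phi x')$, so $\Phi$ is a surjective isometry of $(X,\rho_{K,d})$ onto $(Z,\sigma)$ and it suffices to prove $(Z,\sigma)$ separable; moreover, unwinding the definitions, fragmentability of the family $K$ is exactly the statement that $\Phi\colon(X,\tau)\to(Z,\sigma)$ is a fragmented map. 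The aim is then to descend this to the identity $\id_Z\colon(Z,\tau_p)\to(Z,\sigma)$ and apply Lemma \ref{r:fr1}.4: since $(Z,\tau_p)$ is separable metrizable and $\id_Z$ is onto, once $\id_Z$ is fragmented we get that $(Z,\sigma)$ is separable. The hard part — and the main obstacle — is precisely this descent, since a continuous surjection can collapse a $\tau$-neighborhood and thereby artificially shrink the $\sigma$-oscillation, so fragmentability of $\Phi$ need not pass to $\id_Z$ in general. I would repair this by first replacing $X$ with a closed $X'$ on which $\Phi|_{X'}$ is an \emph{irreducible} surjection onto $Z$ (such $X'$ exists by a routine Zorn argument, and $\Phi|_{X'}$ is still fragmented). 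For an irreducible surjection the image of every nonempty open set has nonempty interior; hence if $\id_Z$ failed to be fragmented, Lemma \ref{r:fr1}.1 would produce a closed $B\subseteq Z$ and $\eps>0$ with $\sigma$-diameter of $W\cap B$ exceeding $\eps$ for every $\tau_p$-open $W$ meeting $B$, and restricting the irreducible map over $B$ would then force every nonempty relatively open piece of $(\Phi|_{X'})^{-1}(B)$ to have $\sigma$-oscillation $>\eps$, contradicting fragmentability of $\Phi|_{X'}$.

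For $(3)\Rightarrow(1)$ I would argue by contraposition, building a Cantor scheme. If $F$ is not fragmented then, reducing to closed sets via Lemma \ref{r:fr1}.1, there are a closed nonempty $A\subseteq X$ and $\eps>0$ such that every nonempty relatively open subset of $A$ has $\rho_{F,d}$-diameter exceeding $\eps$. Using normality of the compact space $X$ together with continuity of the members of $F$, I would construct closed sets $\{A_s\}_{s\in\{0,1\}^{<\omega}}$ with $A_{s0},A_{s1}$ disjoint closed subsets of $A_s$ of nonempty relative interior, and functions $f_s\in F$ separating $f_s(A_{s0})$ from $f_s(A_{s1})$ by more than $\eps/2$. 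Putting $K=\{f_s\}$, a countable subfamily, and choosing by compactness a point $x_b\in\bigcap_n A_{b\rest n}$ for each branch $b\in\{0,1\}^{\N}$, one obtains $2^{\aleph_0}$ points that are pairwise $(\eps/2)$-separated in $\rho_{K,d}$, the separating function at their first disagreement doing the job. Thus $(X,\rho_{K,d})$ is non-separable, which contradicts $(3)$ and completes the cycle.

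The genuinely routine steps are the isometry bookkeeping in $(2)\Rightarrow(3)$ and the explicit splitting in the Cantor scheme of $(3)\Rightarrow(1)$. The crux of the whole argument is the descent in $(2)\Rightarrow(3)$: passing to an irreducible model of $Z$ is exactly what rescues fragmentability from being destroyed under a continuous surjection, and this is the reformulation through which Namioka's Theorems 3.4 and 3.6 come into play.
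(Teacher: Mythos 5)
Your overall architecture coincides with the paper's: $(1)\Rightarrow(2)$ is trivial, $(2)\Rightarrow(3)$ goes through the evaluation map into the product with the sup-pseudometric and then Lemma \ref{r:fr1}.4, and $(3)\Rightarrow(1)$ is the same dyadic Cantor-scheme argument producing continuum many pairwise $\rho_{K,d}$-separated points; those two outer implications are correct as you wrote them. The difference is that the paper disposes of the descent step in $(2)\Rightarrow(3)$ --- from fragmentability of $\Phi=i\circ\pi_0$ to fragmentability of $\id_Z:(Z,\tau_p)\to(Z,\sigma)$ --- by citing \cite[Lemma 2.3.5]{GM-rose}, whereas you attempt to prove it, and it is precisely there, at what you correctly identify as the crux, that your argument has a gap.

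The flaw is in the order of operations. You fix one global irreducible model $\Phi|_{X'}\colon X'\to Z$ and only afterwards, given the bad closed set $B\subseteq Z$, you ``restrict the irreducible map over $B$''. But irreducibility is not inherited by restrictions over closed subsets: the induced surjection $(\Phi|_{X'})^{-1}(B)\to B$ need not be irreducible, and the image of a nonempty relatively open piece of $(\Phi|_{X'})^{-1}(B)$ need not contain any nonempty relatively open subset of $B$ --- it can even be a single non-isolated point of $B$, in which case that piece has $\sigma$-oscillation $0$ and no contradiction with fragmentability of $\Phi|_{X'}$ arises. Concretely, let $X'=\cls \{(t,\sin(1/t)): 0<t\le 1\}$ be the closed topologist's sine curve, $Z=[0,1]$, and $\Phi$ the first-coordinate projection; this is an irreducible surjection, yet for $B=\{0\}\cup\{1/(2\pi n): n\ge 1\}$ the set $\{0\}\times(1/2,1)$ is a nonempty relatively open piece of $\Phi^{-1}(B)$ which collapses to the non-isolated point $0$ of $B$. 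So when $\id_Z$ fails to be fragmented, the full preimage $(\Phi|_{X'})^{-1}(B)$ need not witness non-fragmentability of $\Phi|_{X'}$; only some smaller closed set must. The repair is to run your Zorn argument \emph{after} $B$ has been produced and \emph{over} $B$ itself: choose a closed $A''\subseteq\Phi^{-1}(B)$ minimal with respect to $\Phi(A'')=B$, so that $\Phi|_{A''}\colon A''\to B$ is irreducible. Then for every open $O$ meeting $A''$ the set $B\setminus\Phi(A''\setminus O)$ is nonempty, relatively open in $B$, and contained in $\Phi(O\cap A'')$, so every such piece has $\sigma$-diameter exceeding $\eps$, contradicting fragmentability of the family $K$ applied to the closed set $A''$. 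With this single change of quantifier order your proof of the descent, and hence of the whole theorem, is correct, and it supplies the detail the paper outsources to \cite{GM-rose}.
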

\begin{proof}
	(1) $\Rightarrow$ (2) is trivial.
	
	(2) $\Rightarrow$ (3):
	Let $K$ be a countable subfamily of $F$.
	Consider the natural map $$\pi: X \to Y^K, \pi(x)(f):=f(x).$$
	By (2), $K$ is a fragmented family.
	This means  (see \cite[Def. 6.8]{GM1}) that 
%by Lemma \ref{r:fr1}.6 	$F=\{f_i: X \to (Y,\xi)\}_{i \in I}$ is a fragmented family iff
%	the induced map $X \to (Y^F, \xi_U)$ is fragmented,
%	where $\xi_U$ is the uniformity of uniform convergence on $Y^F$. 
	the map $\pi$ is $(\tau,\mu_K)$-fragmented, where $\mu_K$ is the uniformity
	of $d$-uniform convergence on $Y^K:=\{f: K \to (Y,d)\}$.
	Then the map
	$\pi$ is also $(\tau,d_K)$-fragmented, where $d_K$ is the pseudometric on $Y^K$ defined by
	$$d_K(z_1,z_2):=\sup_{f \in K} d(z_1(f), z_2(f)).$$
	Since $d$ is bounded, $d_K(z_1,z_2)$ is finite and $d_K$ is well-defined.
	Denote by $(X_K,\tau_p)$ the subspace $\pi(X) \subset Y^K$ in pointwise topology.
	Since $K \subset C(X)$, the induced map $\pi_0: X \to X_K$ is a continuous
	map onto the compact space $(X_K, \tau_p)$. Denote by $i: (X_K,\tau_p) \to (Y^K,d_K)$ the inclusion map.
	So, $\pi=i \circ \pi_0$, where the map $\pi$ is $(\tau,d_K)$-fragmented.
	This easily implies  %Lemma \ref{r:fr1}.7  (or, by 
	(see \cite[Lemma 2.3.5]{GM-rose}) that 
	$i$ is $(\tau_p,d_K)$-fragmented. It immediately follows that the identity map
	$id: (X_K,\tau_p) \to (X_K,d_K)$ is $(\tau_p,d_K)$-fragmented.
	
	Since $K$ is countable, $(X_K, \tau_p) \subset Y^K$ is metrizable.
	Therefore, $(X_K, \tau_p)$ is second countable (being a metrizable compactum). Now, since $d_K$ is a pseudometric on $Y^K$,
	and $id: (X_K,\tau_p) \to (X_K,d_K)$ is $(\tau_p,d_K)$-fragmented, we can apply Lemma \ref{r:fr1}.4. %\cite[Lemma 6.5]{GM1}.
	It directly implies that the set $X_K$ is a separable subset of $(Y^K, d_K)$.
	This means that $(X,\rho_{K,d})$ is separable.
	
	(3) $\Rightarrow$ (1) :
	Suppose that $F$ is not fragmented.
	Thus, there exists a non-empty closed subset $A \subset X$ and an $\eps >0$ such that
	for each non-empty open subset $O \subset X$ with $O \cap A \neq \emptyset$ there is
	some $f \in F$ such that
	$f(O \cap A)$ is not $\eps$-small in $(Y,d)$.
	Let $V_1$ be an arbitrary non-empty relatively open subset in $A$. There are 
	$a,b \in V_1$ and
	$f_1 \in F$ such that $d(f_1(a),f_1(b)) > \eps$. Since $f_1$ is continuous we can choose
	relatively open subsets $V_2,V_3$ in $A$ 
	with $\cls (V_2 \cup V_3) \subset V_1$ 
	such that $d(f_1(x),f_1(y)) > \eps$ for every $(x,y) \in V_2 \times V_3$.

	By induction we can construct a sequence $\{V_n\}_{n \in \N}$ of non-empty relatively open subsets in $A$ and a sequence $K:=\{f_n\}_{n \in \N}$ in $F$ such that:
	
	\bit
	\item [(i)]
	$V_{2n} \cup V_{2n+1} \subset V_n$
	for each $n \in \N$;
	\item [(ii)] $d(f_n(x),f_n(y)) > \eps$ for every $(x,y) \in V_{2n} \times V_{2n+1}$.
	
	\eit
	
	\sk
	
	We claim that $(X,\rho_{K,d})$  is not separable,
	where $$\rho_{K,d}(x_1,x_2):=\sup_{f \in K} d(f(x_1),f(x_2)).$$
	
	In fact, for each \emph{branch}
	$$
	\a:=V_1 \supset V_{n_1} \supset V_{n_2} \supset \cdots
	$$
	where for each $i, n_{i+1}=2n_i$ or $2n_i+1$, by compactness of $X$ one can choose an element
	$$
	x_{\a} \in \bigcap_{i \in \N} {\cls}(V_{n_i}).
	$$
	If $x=x_\alpha$ and $y=x_\beta$ come from different branches,
	then there is an $n \in \N$ such that
	$x \in {\cls}(V_{2n})$ and
	$y \in {\cls}(V_{2n+1})$  (or vice versa).
	In any case it follows from (ii) and the continuity of $f_n$ that $d(f_n(x),f_n(y)) \geq \eps$,
	hence $\rho_{K,d}(x,y) \geq \eps$. Since there are uncountably many branches we conclude
	that $A$ and hence also $X$ are not $\rho_{K,d}$-separable.
\end{proof}

\begin{defin} \label{d:AspSet}
	\cite{Fa, Me-nz}
	Let $X$ be a compact space and $F \subset C(X)$ a norm bounded family of continuous real valued functions on $X$. Then $F$ is said to be an
	\emph{Asplund family for} $X$ if for every countable subfamily $K$ of $F$
	the pseudometric space $(X,\rho_{K,d})$ is separable, 
	where
	$$
	\rho_{K,d}(x_1,x_2):=\sup_{f \in K} |f(x_1) - f(x_2)|.
	$$
\end{defin}

\begin{cor} \label{c:AspSet}
	Let $X$ be a compact space and $F \subset C(X)$ a norm bounded family of continuous real valued functions on $X$. Then $F$ is fragmented if and only if $F$ is an Asplund family for $X$.
\end{cor}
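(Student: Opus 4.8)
The plan is to recognize that this corollary is simply the specialization of Theorem \ref{t:countDetermined} to the scalar target space $Y = \R$, so that no genuinely new argument is needed beyond a careful matching of definitions. First I would take $(Y,d) = (\R, d)$ with the usual metric $d(s,t) = |s-t|$. Since $F \subset C(X)$ is norm bounded and $X$ is compact, the hypotheses of Theorem \ref{t:countDetermined} are satisfied: $F = \{f_i : X \to \R\}_{i \in I}$ is a bounded family of \emph{continuous} maps from a compact space into the pseudometric (here genuinely metric) space $(\R, d)$.

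Next I would identify the two relevant notions with two of the three equivalent conditions of that theorem. On one side, the statement that $F$ is a fragmented family in the sense of Definition \ref{def:fr}.2 is, by definition, exactly condition (1) of Theorem \ref{t:countDetermined}. On the other side, for any countable subfamily $K \subset F$, the pseudometric entering condition (3) of the theorem is $\rho_{K,d}(x_1,x_2) = \sup_{f \in K} d(f(x_1),f(x_2)) = \sup_{f \in K} |f(x_1) - f(x_2)|$, which is precisely the pseudometric used in Definition \ref{d:AspSet}. Hence condition (3) of the theorem asserts exactly that $(X,\rho_{K,d})$ is separable for every countable $K \subset F$, i.e. that $F$ is an Asplund family for $X$.

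With these identifications in hand, the equivalence (1) $\Leftrightarrow$ (3) of Theorem \ref{t:countDetermined} immediately gives that $F$ is fragmented if and only if $F$ is an Asplund family for $X$, completing the proof. The only point that warrants a word of care is that the ``$\eps$-small'' condition underlying fragmentability in Definition \ref{def:fr} refers to the metric uniformity on $\R$, which is the same uniformity from which $\rho_{K,d}$ is built; since both come from $|\cdot|$ this matching is automatic. I therefore expect no real analytic obstacle here — all of the substance (in particular the nontrivial implication (3) $\Rightarrow$ (1), via the branching construction on a non-separable set) is already carried by Theorem \ref{t:countDetermined}, and the corollary is essentially a restatement of it in the language of Asplund families.
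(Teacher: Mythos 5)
Your proposal is correct and matches the paper's intended argument exactly: the paper states this as an immediate corollary of Theorem \ref{t:countDetermined} (the equivalence (1) $\Leftrightarrow$ (3) with $Y=\R$ and $d(s,t)=|s-t|$), whose condition (3) is verbatim the defining property of an Asplund family in Definition \ref{d:AspSet}. Your identification of the two conditions and the remark that all the substance lies in the implication (3) $\Rightarrow$ (1) of the theorem is precisely what the paper leaves implicit.
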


\begin{thm} \label{c:countDetermined2}
	Let $F=\{f_i: X \to Y\}_{i \in I}$ be a family of continuous maps
	from a compact (not necessarily metrizable) space $(X,\tau)$ into a uniform space $(Y,\mu)$.
	Then $F$ is fragmented if and only if every countable subfamily $A \subset F$ is fragmented.
\end{thm}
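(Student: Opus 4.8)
The plan is to reduce the uniform-space statement to the pseudometric case already established in Theorem \ref{t:countDetermined}, using the fact that any uniformity is generated by a family of pseudometrics together with the subbase reduction of Lemma \ref{r:fr1}.1.

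One implication is trivial: if $F$ is fragmented then so is every subfamily, in particular every countable one. For the converse I would argue as follows. Recall that the uniformity $\mu$ on $Y$ is generated by some family $\{d_j\}_{j\in J}$ of pseudometrics, in the sense that the sets $U_{j,\eps}=\{(y,y'): d_j(y,y')<\eps\}$ (for $j \in J$, $\eps>0$) form a subbase of $\mu$. Replacing each $d_j$ by $\min\{d_j,1\}$ I may assume without loss of generality that every $d_j$ is bounded by $1$; this changes neither the generated uniformity nor the entourages $U_{j,\eps}$ for $\eps<1$. Fix $j\in J$. Since the pseudometric uniformity of $d_j$ is coarser than $\mu$, the identity $(Y,\mu)\to(Y,d_j)$ is (uniformly) continuous, and hence each $f_i$ is a continuous map $X\to (Y,d_j)$. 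Thus $F$ is a bounded family of continuous maps from the compact space $X$ into the pseudometric space $(Y,d_j)$, and Theorem \ref{t:countDetermined} applies to it.

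Next I would observe that fragmentability is monotone under coarsening of the target uniformity: since every $d_j$-entourage lies in $\mu$, any $\mu$-fragmented family is automatically $(\tau,d_j)$-fragmented. In particular, by hypothesis every countable subfamily $K\subset F$ is $\mu$-fragmented, hence $(\tau,d_j)$-fragmented. Applying the implication (2) $\Rightarrow$ (1) of Theorem \ref{t:countDetermined} to the pseudometric-valued family $F$ then yields that $F$ itself is $(\tau,d_j)$-fragmented. As $j\in J$ was arbitrary, $F$ is $(\tau,d_j)$-fragmented for every $j$; equivalently, the fragmentation condition of Definition \ref{def:fr} holds for every subbase entourage $U_{j,\eps}$. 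By the subbase reduction of Lemma \ref{r:fr1}.1 this already implies that $F$ is $(\tau,\mu)$-fragmented, which completes the proof.

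The genuine mathematical content sits entirely in Theorem \ref{t:countDetermined} (the countable determination of fragmentability into a pseudometric space); the step that remains to be handled carefully is the passage from a single uniformity to its generating pseudometrics. The two points deserving attention are (i) the truncation $d_j\mapsto\min\{d_j,1\}$, needed only so that $F$ becomes bounded as required in Theorem \ref{t:countDetermined} while leaving the small-scale entourages untouched, and (ii) the monotonicity of fragmentability under passing to a coarser uniformity, which lets the $\mu$-fragmentability hypothesis on countable subfamilies feed into the pseudometric theorem. Neither point is hard, so I expect no serious obstacle beyond this routine bookkeeping.
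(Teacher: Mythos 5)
Your proof is correct and follows essentially the same route as the paper: reduce to the pseudometric case via the subbase reduction of Lemma \ref{r:fr1}.1, normalize each generating pseudometric to be bounded (the paper uses $\frac{d}{1+d}$ where you use $\min\{d,1\}$, an immaterial difference), and then invoke Theorem \ref{t:countDetermined}. Your write-up is in fact more explicit than the paper's terse sketch, spelling out the continuity of each $f_i$ into $(Y,d_j)$ and the monotonicity of fragmentability under coarsening, both of which the paper leaves implicit.
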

\begin{proof} The proof can be reduced to Theorem \ref{t:countDetermined}.
	Every uniform space can be uniformly approximated by pseudometric spaces.
	Using Lemma \ref{r:fr1}.1 
	we can assume that $(Y,\mu)$ is pseudometrizable; i.e.
	there exists a pseudometric $d$ such that $\mathrm{unif}(d)=\mu$.
	Moreover, replacing $d$ by the uniformly equivalent pseudometric $\frac{d}{1+d}$ we can assume that $d \leq 1$.
\end{proof}

\section{Classes of dynamical systems}
\label{s:classes}

\begin{defin} \label{d:tameDS} 
	A compact  dynamical $S$-system $X$ is said to be \emph{tame} if
	one of the following equivalent conditions are satisfied:
	\ben 
	\item %!!!
	for every $f \in C(X)$ the family $fS:=\{fs: s \in S\}$
	 has no independent subsequence. %(see Theorem \ref{f:sub-fr}, Definitions \ref{d:l1} and \ref{d:ind}).   

	\item every $p \in E(X)$ is a \textit{fragmented map} $X \to X$.  
	\item for every $p \in E(X)$ and every $f \in C(X)$ the composition $fp: X \to \R$ has PCP. 
	\een 
\end{defin}

The following principal result is a dynamical analog
of the Bourgain-Fremlin-Talagrand dichotomy \cite{BFT,TodBook}.

\begin{thm} \label{D-BFT} 
	\cite{GM1} \emph{(A dynamical version of BFT dichotomy)}
	Let $X$ be a compact metric dynamical $S$-system and let $E=E(X)$ be its
	enveloping semigroup. 
	Either
	\begin{enumerate}
		\item
		$E$ is a separable Rosenthal compact space (hence $E$ is Fr\'echet and ${card} \; {E} \leq
		2^{\aleph_0}$); or
		\item
		the compact space $E$ contains a homeomorphic
		copy of $\beta\N$ (hence ${card} \; {E} = 2^{2^{\aleph_0}}$).
	\end{enumerate}
	The first possibility
	holds iff $X$ is a tame $S$-system.
\end{thm}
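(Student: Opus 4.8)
The plan is to reduce the statement to the classical Bourgain--Fremlin--Talagrand dichotomy \cite{BFT,TodBook} for uniformly bounded sequences of continuous functions on a Polish space, and then to read off the two alternatives through the enveloping semigroup. First I would fix a countable family $\{f_n\}_{n\in\N}\subset C(X)$ with $\|f_n\|\le 1$ separating the points of the compact metric space $X$, and consider $\Phi\colon X^X\to \R^{X\times\N}$ defined by $\Phi(p)(x,n):=f_n(p(x))$. Each coordinate $p\mapsto f_n(p(x))$ is continuous for the pointwise topology, so $\Phi$ is continuous, and $\Phi$ is injective since $\{f_n\}$ separates points; as $E=E(X)$ is compact and $\R^{X\times\N}$ Hausdorff, $\Phi$ restricts to a topological embedding of $E$ into $\R^{P}$, where $P:=X\times\N$ is Polish. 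Under this identification the dense subset $\tilde S=j(S)\subset E$ is carried to the family of continuous functions $F_s(x,n):=f_n(sx)=(f_n s)(x)$ on $P$, and $E$ is precisely its pointwise closure. I would also record at once that $E$ is \emph{separable}: for compact metric $X$ the space $C(X,X)$ with the uniform metric $\hat d(g,h)=\sup_x d(g(x),h(x))$ is separable, so $\tilde S$ contains a countable $\hat d$-dense subfamily $\{\tilde s:s\in S_0\}$; since uniform convergence implies pointwise convergence, this countable set is already pointwise dense in $E=\cls(\tilde S)$.

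Suppose $X$ is tame. By Definition \ref{d:tameDS}(2) every $p\in E$ is a fragmented map $X\to X$, so each $f_n\circ p$ is a fragmented real function on $X$, hence of Baire class $1$ by Lemma \ref{r:fr1}.3 (recall that $X$ is Polish). Therefore every $\Phi(p)=F_p$ is of Baire class $1$ on $P$, so $E$ is a pointwise compact subset of $B_1(P)$, i.e. a \emph{Rosenthal compactum}. By the Bourgain--Fremlin--Talagrand theory such a compactum is Fr\'echet and angelic, and since $E\subset B_1(P)$ with $\card B_1(P)\le 2^{\aleph_0}$ it has cardinality at most $2^{\aleph_0}$; combined with the separability already established, this is exactly alternative (1).

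Conversely, suppose $X$ is not tame. By Definition \ref{d:tameDS}(1) there is $f\in C(X)$ and an independent sequence $(fs_k)_k$ in $fS$. Being independent, $(fs_k)$ admits no pointwise convergent subsequence, and moreover its pointwise closure in the compact cube $[-\|f\|,\|f\|]^X$ is homeomorphic to $\beta\N$ via $\psi(\Ucal):=\lim_{\Ucal}fs_k$ (distinct ultrafilters give distinct limits, directly from the defining inequalities of independence). I would then lift this copy into $E$: define $\phi\colon\beta\N\to E$, $\phi(\Ucal):=\lim_{\Ucal}\tilde s_k$, where the ultrafilter limit exists by compactness of $E$; this map is continuous. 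Writing $\pi_f\colon E\to\R^X$, $\pi_f(p):=f\circ p$, which is continuous, we get $\pi_f\circ\phi=\psi$, since $\pi_f$ commutes with ultrafilter limits. As $\psi$ is injective, so is $\phi$, hence $\phi$ is a continuous injection of the compact space $\beta\N$ into the Hausdorff space $E$, i.e. an embedding. Thus $E$ contains a copy of $\beta\N$, and since $E\subset X^X$ gives $\card E\le (2^{\aleph_0})^{2^{\aleph_0}}=2^{2^{\aleph_0}}$, we conclude $\card E=2^{2^{\aleph_0}}$, which is alternative (2). The two alternatives are mutually exclusive because a Rosenthal compactum is angelic while $\beta\N$ is not, and the two implications just described are exactly the asserted equivalence of (1) with tameness.

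The step I expect to carry the real weight is the appeal to the classical Bourgain--Fremlin--Talagrand dichotomy, which supplies the genuine analytic content (the Fr\'echet/angelic structure of Rosenthal compacta); everything else is translation. Within that translation the delicate point is the $\beta\N$-lifting: one must be certain that the embedded copy lands inside $E$ itself and not merely inside a continuous image of it, and this is precisely why describing both $\psi$ and $\phi$ through ultrafilter limits, together with the identity $\pi_f\circ\phi=\psi$, is essential. A secondary point to handle carefully is that tameness is formulated \emph{per function} $f\in C(X)$ while alternative (2) produces a single global copy of $\beta\N$; the argument above reconciles this by running the dichotomy on the one bad family $fS$ and then transporting it through $\pi_f$.
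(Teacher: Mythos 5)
Your proposal is correct, and since the paper itself gives no proof of this theorem (it is quoted from \cite{GM1}), the relevant comparison is with the argument there: your route --- embedding $E$ into $\R^{P}$ for the Polish space $P=X\times\N$ via a separating sequence, using ``tame $\Rightarrow$ every $p\in E$ fragmented $\Rightarrow$ Baire class 1'' for alternative (1), and lifting the $\beta\N$-closure of an independent sequence through $\pi_f$ for alternative (2) --- is essentially the same reduction to the classical BFT results that \cite{GM1} carries out. The only point you should make explicit is that injectivity of $\psi$ is not quite ``direct from the defining inequalities'': independence only yields nonempty \emph{finite} intersections, and one needs compactness of $X$ (with the closed sets $\{fs_k\le a\}$, $\{fs_k\ge b\}$) to produce, for each $A\subset\N$, a point in $\bigcap_{k\in A}\{fs_k\le a\}\cap\bigcap_{k\notin A}\{fs_k\ge b\}$ separating the ultrafilter limits --- this is the classical Rosenthal argument and closes the gap.
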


Thus, a metrizable dynamical system is tame iff
$\card(E(X)) \leq 2^{\aleph_0}$ iff $E(X)$ is a Rosenthal compactum (or a Fr\'echet space).
Moreover, by \cite{GMU} a metric $S$-system is tame iff every $p \in E(X)$
is a Baire class 1 map $p: X \to X$. 

\sk
The class of tame dynamical systems is quite large. 
It is closed under subsystems, products and factors.
%Also, every null $\Z$-system is tame.
Recall that an $S$-dynamical system $X$ is weakly almost periodic ($\mathrm{WAP}$) if and only if every $p \in E(X)$ is a continuous map. 
As every continuous map $X \to X$ is fragmented it follows that every WAP system is tame. 
A metrizable $S$-system $X$ is WAP iff $(S,X)$ is representable on %(separable) 
a reflexive Banach space. 
The class of hereditarily nonsensitive systems ($\mathrm{HNS}$) is an intermediate class of systems, \cite{GM-survey}. 
The property
HNS admits a reformulation in terms of enveloping semigroup:  
$(S,X)$ is HNS iff $E(S,X)$ (equivalently, $\tilde{S}$) is a fragmented family. Of course, this implies that every $p \in E(X)$ is fragmented. So, indeed, 
 $\mathrm{WAP} \subset \mathrm{HNS} \subset \mathrm{Tame}$.
A metrizable $S$-system $X$ is HNS iff $E(X)$ is metrizable iff $(S,X)$ is Asplund representable 
(RN, in another terminology), \cite{GM1,GMU}.

\subsection{Some classes of functions}
\label{s:compactif}

A \emph{compactification} of $X$ is a continuous map $\ga: X \to Y$  with a dense range where $Y$ is 
compact.  
%(Hausdorff, by our convention)  
When $X$ and $Y$ are $S$-spaces and $\ga$ is an $S$-map we say that $\ga$ is 
an \emph{$S$-compactification}. 

A function $f \in C(X)$ on an $S$-space $X$ is said to be Right Uniformly Continuous if the induced right action $C(X) \times S \to C(X)$ is continuous at the points $(f,s)$, where $s \in S$. Notation: $f \in \RUC(X)$. 
If $X$ is a compact $S$-space then $\RUC(X)=C(X)$.  
Note that $f \in \RUC(X)$ if and only if there exists an  
$S$-compactification $\ga: X \to Y$ such that $f= \tilde{f} \circ \ga$ for some $\tilde{f} \in C(Y)$.  
In this case we say that $f$ \emph{comes} from the $S$-compactification $\ga: X \to Y$.

The function $f$ is said to be: a)
\emph{WAP}; b) \emph{Asplund}; c)  \emph{tame}
if  $f$ comes from an $S$-compactification $\ga: X \to Y$
such that $(S,Y)$ is: WAP, HNS or tame respectively.
For the corresponding classes of functions we use the notation:
$\WAP (X)$, $\Asp(X),$ $\mathrm{Tame}(X)$, respectively. Each of these is a
norm closed
$S$-invariant subalgebra of the $S$-invariant algebra $\RUC(X)$ and
$\WAP(X) \subset \Asp(X) \subset \mathrm{Tame}(X).$
For more details see \cite{GM-AffComp,GM-survey}. 
As a particular case we have defined the algebras
$\WAP(S)$, $\Asp(S)$, $\mathrm{Tame}(S)$ corresponding to the left action of $S$ on itself.

The $S$-invariant subalgebra $\Tame(S)$ of $\RUC(S)$ induces an $S$-compactification of $S$ 
which we denote by $S \to S^{\Tame}$. Recall that it is a semigroup compactification of $S$ and 
that $S^{\Tame}$ is a compact right topological semigroup, \cite{GM-AffComp}. 
Similarly, one defines the compactifications $S^{\AP}, S^{\WAP}, S^{\Asp}$. 
%280817 adding two lines 
Here AP means \textit{almost periodic}. AP compact $G$-systems (for groups $S:=G$) are just equicontinuous systems.

%%% eli   
\subsection{Cyclic $S$-compactifications}  
\label{s:cyclic}

Let $X$ be an $S$-space. 
For every $f \in \mathrm{RUC}(X)$ define 
the following pointwise continuous natural $S$-map 
$$
\delta_f: X \to \RUC(S), \ \ \delta_f(x)(g):=f(gx). 
$$
It induces an $S$-compactification $\delta_f: X \to X_f$, where $X_f$ is the pointwise closure of  $\delta_f(X)$ in $\RUC(S)$.  
Denote by $\A_f:=\lan fS \ran$
the smallest $S$-invariant unital Banach subalgebra of $\mathrm{RUC}(X)$ which contains $f$. 
The corresponding Gelfand $S$-compactification is equivalent to $\delta_f: X \to X_f$. 
Let 
 $\widetilde{f}:= \widehat{e}|_{X_f}$, where $\widehat{e}$ is the evaluation at $e$ functional on $\RUC(S)$. Then $f$ comes from the $S$-system $X_f$. 
 Moreover, $\widetilde{f}S$ separates points of $X_f$.
	 
	 We call $\delta_f: X \to X_f$ the  \emph{cyclic compactification} of $X$ (induced by $f$), \cite{BJM,GM1,GM-AffComp}. 
	 
	 %!!
	 \begin{defin} \label{d:cyclic} \cite{GM1,GM-AffComp}
	 	We say that a compact  dynamical $S$-system $X$ is \emph{cyclic} if there exists $f \in C(X)$ such that $(S,X)$
	 	is topologically $S$-isomorphic to the Gelfand space $X_f$ of the $S$-invariant unital subalgebra $\A_f \subset C(X)$ generated by the orbit $fS$.
	 \end{defin}

\begin{lem} \label{l:Prop} 
	Let $\ga: X \to Y$ be an $S$-compactification and $f \in C(X)$. 
	\ben
	\item $f$ comes from $\ga$ (i.e., $f=\bar{f} \circ \ga$ for some $\bar{f} \in C(Y)$) if and only if  there exists a continuous onto $S$-map $q:Y \to X_f$ such that  $\bar{f}=\tilde{f}\circ q$ and the following diagram is commutative  
	\begin{equation*}
	\xymatrix { X \ar[d]_{f}  \ar[dr]^{\pi_f} \ar[r]^{\ga} & Y
		\ar[d]^{q} \\
		\R	& \ar[l]_{\widetilde{f}} X_f }
	\end{equation*}

	\item $q: Y \to X_f$ in (1) is an isomorphism of $S$-compactifications if and only if $\bar{f} S$ separates points of $Y$ (where, as before, $\bar{f}=\tilde{f}\circ q$). 
%	\item
	\een
\end{lem}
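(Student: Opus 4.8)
The plan is to recast both statements in the language of closed unital $S$-invariant subalgebras of $\RUC(X)$ and to exploit the (equivariant) Gelfand correspondence between such subalgebras and $S$-compactifications of $X$. Concretely, the $S$-compactification $\ga: X \to Y$ corresponds to the closed unital $S$-invariant subalgebra $\ga^*(C(Y)) = \{h \circ \ga : h \in C(Y)\} \subset \RUC(X)$ (this pullback lands in $\RUC(X)$ precisely because $\ga$ is an $S$-compactification), the pullback $\ga^*$ is an isometric embedding since $\ga(X)$ is dense, and $X_f$ is by construction the Gelfand space of $\A_f = \langle fS \rangle$ with structure map $\pi_f = \delta_f$. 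Throughout I would use the already established identity $f = \widetilde{f} \circ \pi_f$ and the fact that $\widetilde{f}S$ separates the points of $X_f$.

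For the forward implication of (1), assume $f = \bar{f} \circ \ga$ with $\bar{f} \in C(Y)$, so that $f \in \ga^*(C(Y))$. Since $\ga^*(C(Y))$ is a closed unital $S$-invariant subalgebra containing $f$, it contains the smallest such algebra, whence $\A_f \subseteq \ga^*(C(Y))$. By the contravariant functoriality of the Gelfand compactification, this inclusion produces a continuous $S$-map $q: Y \to X_f$ with $q \circ \ga = \pi_f$; surjectivity follows because $q(\ga(X)) = \pi_f(X)$ is dense in $X_f$ and $Y$ is compact. To pin down $\bar{f}$, I would observe that $\widetilde{f} \circ q$ and $\bar{f}$ are continuous on $Y$ and agree on the dense set $\ga(X)$, since $(\widetilde{f} \circ q)(\ga(x)) = \widetilde{f}(\pi_f(x)) = f(x) = \bar{f}(\ga(x))$; hence $\bar{f} = \widetilde{f} \circ q$ and the diagram commutes. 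The converse in (1) is a one-line chase: given such a $q$, set $\bar{f} := \widetilde{f} \circ q$ and compute $\bar{f} \circ \ga = \widetilde{f} \circ q \circ \ga = \widetilde{f} \circ \pi_f = f$, so $f$ comes from $\ga$.

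For (2) the key computation is the identity $\bar{f}s = (\widetilde{f}s) \circ q$ for every $s \in S$, which follows from $\bar{f} = \widetilde{f} \circ q$ and the $S$-equivariance of $q$ via $(\bar{f}s)(y) = \widetilde{f}(q(sy)) = \widetilde{f}(s\,q(y)) = (\widetilde{f}s)(q(y))$. Since $\widetilde{f}S$ separates the points of $X_f$, this identity shows that $q$ is injective if and only if $\bar{f}S$ separates the points of $Y$: if $q(y_1) = q(y_2)$ then $\bar{f}s$ agrees on $y_1, y_2$ for all $s$, and conversely a point-separating $\bar{f}S$ forces injectivity. Finally, an injective continuous onto $S$-map between compact Hausdorff spaces that intertwines the compactification maps ($q \circ \ga = \pi_f$) is exactly an isomorphism of $S$-compactifications, which yields both directions of (2) simultaneously.

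The only substantive point, and the step I expect to require the most care, is the forward direction of (1): confirming that the algebra inclusion $\A_f \subseteq \ga^*(C(Y))$ delivers the factor map in the correct direction $q: Y \to X_f$ (and not the reverse), and that this $q$ is automatically continuous, surjective, and $S$-equivariant. Once the functorial dictionary is invoked correctly, everything else reduces to the identity $f = \widetilde{f} \circ \pi_f$, density of $\ga(X)$ in $Y$, and an elementary separation-of-points argument.
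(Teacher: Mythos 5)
Your proposal is correct and takes essentially the same route as the paper: the paper's entire proof is the one-line instruction to use Gelfand's correspondence between $S$-compactifications and closed unital $S$-invariant subalgebras together with Stone--Weierstrass, and your argument is a faithful implementation of exactly that dictionary. The only cosmetic difference is in part (2), where the paper would invoke Stone--Weierstrass to show that the algebra generated by $\bar{f}S$ is all of $C(Y)$, while you instead deduce injectivity of $q$ directly from the paper's already-established fact that $\widetilde{f}S$ separates the points of $X_f$ and then use compactness; these are interchangeable steps within the same framework.
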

\begin{proof}
	Use Gelfand's description of compactifications in terms of the corresponding algebras and the Stone-Weierstrass Theorem. 
	\end{proof}

\begin{remark} \label{r:cycl-comes}
	Let $X$ be a (not necessarily compact) $S$-space and $f \in \RUC(X)$.
	Then, as was shown in \cite{GM-AffComp}, there exist
	a cyclic $S$-system $X_f$, a continuous $S$-compactification $\pi_f: X \to X_f$,
	and a continuous function $\tilde{f}: X_f \to \R$ such that
	$f=\tilde{f} \circ \pi_f$;
	that is, $f$ comes from the $S$-compactification $\pi_f: X \to X_f$.
	The collection of functions $\tilde{f} S$ separates points of $X_f$.
\end{remark}

\sk

\begin{thm} \label{t:tame-f}
	Let $X$ be a compact
	$S$-space and $f \in C(X)$.
	The following conditions are equivalent:
	\ben
	\item 
	$f \in \Tame(X)$ (i.e. $f$ comes from a tame dynamical system).
	\item 
	$fS$ is a tame family. 
	\item
	${\cls}_p(fS) \subset {\mathcal F}(X)$.  
	\item $fS$ is an eventually fragmented family.
	\item For every countable infinite subset $A \subset S$ there exists a countable infinite subset $A' \subset A$ such that
	the corresponding pseudometric
	$$
	\rho_{f, A'}(x,y):=\sup\{|f(gx)-f(gy)|:  \ \ g \in A'\}
	$$
	on $X$ is separable. 
	\item The cyclic $S$-space $X_f$ is Rosenthal representable (i.e., WRN).
	\een
\end{thm}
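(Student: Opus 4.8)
The plan is to handle the equivalences in two blocks: the ``fragmentability'' conditions (2)--(5), which are read off from the cited functional-analytic theorems applied to the single bounded family $fS\subset C(X)$, and the ``dynamical/representation'' conditions (1) and (6), which are bridged to the first block through the cyclic compactification $\pi_f\colon X\to X_f$ together with the Banach representation theory of tame families. First I would observe that since $f\in C(X)$ and every translation $\tilde g$ is a homeomorphism of the compact space $X$, the family $fS=\{fg:g\in S\}$ is a bounded subset of $C(X)$ with $\|fg\|_\infty=\|f\|_\infty$. Theorem \ref{f:sub-fr} then applies verbatim to $F=fS$ and yields the equivalence of (2), (3) and (4), which are precisely its items (2), (4) and (5).

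Next, for (4)$\Leftrightarrow$(5) I would unwind the definition of \emph{eventual fragmentability} (Definition \ref{def:fr}.3) in the language of Theorem \ref{t:countDetermined}. Any infinite subfamily of $fS$ is, after relabelling, of the form $\{fg:g\in A\}$ for a countable infinite $A\subset S$, and a countable subfamily $\{fg:g\in A'\}$ is fragmented (as a family of continuous maps into $(\R,|\cdot|)$) if and only if the pseudometric space $(X,\rho_{f,A'})$ is separable, by the (1)$\Leftrightarrow$(3) part of Theorem \ref{t:countDetermined} applied to the countable family itself (its own sup dominates all sub-sups, so separability for the whole family is the governing condition). Thus ``every infinite subfamily contains an infinite fragmented one'' translates exactly into ``every countable infinite $A\subset S$ contains a countable infinite $A'$ with $(X,\rho_{f,A'})$ separable'', which is (5).

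It remains to splice in the dynamical conditions. For (1)$\Rightarrow$(2): if $f=\bar f\circ\gamma$ comes from a tame $S$-compactification $\gamma\colon X\to Y$, then $\bar f S$ has no independent sequence by Definition \ref{d:tameDS}.1; since $\gamma$ is an $S$-map we have $(fg)(x)=(\bar f g)(\gamma(x))$, so any independent sequence in $fS$ would push forward, through its witnessing points, to one in $\bar f S$, forcing $fS$ to be tame. For the converse I would pass through the cyclic compactification $\pi_f\colon X\to X_f$, where $f=\tilde f\circ\pi_f$, $fg=(\tilde f g)\circ\pi_f$, and $\tilde f S$ separates points of $X_f$. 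Because $\pi_f$ has dense range, $\|\sum c_i\,\tilde f g_i\|_{X_f}=\|\sum c_i\,fg_i\|_{X}$, so $\tilde f S$ contains an $l_1$-sequence on $X_f$ exactly when $fS$ does, and hence $\tilde f S$ is tame on $X_f$ precisely when $fS$ is tame on $X$. Assuming (2), the bounded tame family $\tilde f S$ on the compact space $X_f$ admits a representation on a \emph{Rosenthal} Banach space (the converse direction recorded after Example \ref{ex:tame}, from \cite{GM-tame}); since $X_f$ is the cyclic system generated by this very family, that representation upgrades to an equivariant Rosenthal (WRN) representation of $(S,X_f)$, giving (6). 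Finally (6)$\Rightarrow$(1) is immediate: a Rosenthal representable system is tame, so $(S,X_f)$ is tame, and $f=\tilde f\circ\pi_f$ then comes from the tame system $X_f$, i.e. $f\in\Tame(X)$.

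The main obstacle is the step (2)$\Rightarrow$(6): passing from the purely combinatorial statement that the orbit family $fS$ contains no independent sequence to an honest equivariant representation of the whole cyclic \emph{system} $X_f$ on a Rosenthal space. This rests on the Banach representation theory of \cite{GM-tame} (a bounded tame family can be realized on a Rosenthal space, compatibly with the cyclic construction), and not on any elementary manipulation; everything else is bookkeeping layered on top of Theorems \ref{f:sub-fr} and \ref{t:countDetermined} and the density of $\pi_f(X)$ in $X_f$.
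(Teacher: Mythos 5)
Your proposal is correct and follows essentially the same route as the paper's proof: the block (2)--(4) is read off from Theorem \ref{f:sub-fr}, the equivalence (4)$\Leftrightarrow$(5) comes from Theorem \ref{t:countDetermined}, and the equivalence with (6) is deferred to the Rosenthal-representation results of \cite{GM-rose,GM-tame}, exactly as the paper does. The only difference is that you make explicit the bridge between (1) and (2) (pushing independence through the compactification and back via the cyclic system $X_f$, i.e.\ Lemma \ref{l_1}), a step the paper compresses into its citation of Theorem \ref{f:sub-fr}.
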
 
\begin{proof} 
	The equivalence of (1), 
	(2), (3) and (4) follows from Theorem \ref{f:sub-fr}.
	For	(4) $\Leftrightarrow$ (5) use Theorem \ref{t:countDetermined}. 
	%280817 
	For (4) $\Leftrightarrow$ (6) we refer to \cite{GM-rose,GM-tame}.  
\end{proof}

\section{A characterization of tame symbolic systems}\label{s:symb}

\subsection{Symbolic systems and coding functions}

The binary \emph{Bernoulli shift system} is defined as the cascade $(\Omega,\s)$, where $\Omega:=\{0,1\}^{\Z}$.
We have the natural
$\Z$-action on the compact metric space  
$\Omega$ induced by the $\s$-shift:
$$
\Z \times \Omega \to \Omega, \ \ \ \s^m (\omega_i)_{i \in \Z}=(\omega_{i+m})_{i \in \Z} \ \ \ \forall (\omega_i)_{i \in \Z} \in \Omega, \ \ \forall m \in \Z.
$$

More generally, for a
discrete monoid $S$ and a finite alphabet 
%280817 $\Delta:=\{0,1, \dots,n\}$
$A:=\{0,1, \dots,n\}$
the compact space $\Omega:=A^S$ is an $S$-space under the action
$$ S \times \Omega \to \Omega, \ \ (s \omega)(t)=\omega (ts),\ \omega \in A^S, \ \ s,t \in S.$$
A closed $S$-invariant subset $X \subset A^S$ defines a
subsystem $(S,X)$. Such systems are called {\em subshifts\/} or
{\em symbolic dynamical systems\/}.

%	Note that while any symbolic $G$-system $X$ is cyclic of the type $X_f$ it is not always of the type $G_f$.    	

\begin{defin} \label{d:tametype1} \
	\begin{enumerate}
		\item
		Let $S \times X \to X$ be an action on a (not necessarily compact) space
		$X$,
		$f: X \to \R$ a bounded (not necessarily continuous)
		function, and $z \in X$. Define a \emph{coding function} as follows:
		$$
		\varphi:=m(f,z) : S \to \R, \ s \mapsto f(sz).
		$$ 
		\item
		%	When $S=G:=\Z^k$ and $f(X)=\{0,1,\dots,d\}$ we say
		When $f(X) \subseteq \{0,1,\dots,d\}$ every such code generates a point transitive subshift $S_{\varphi}$ of $A^S$,
		where $A=\{0,1, \dots, d\}$ and 
		$$
		S_{\varphi} : = \cls_p \{g \varphi : g \in S\} \subset  A^S \ \ \ \ (\text{where} \ g \varphi(t)=\varphi(tg))
		$$ 
		is the pointwise closure of the left $S$-orbit $S \varphi$ in the space $\{0,1, \cdots, d\}^S$.
		
		When $S=\Z^k$ we say that $f$ is a \emph{$(k,d)$-code}.
		In the particular case of the characteristic function $\chi_D: X \to \{0,1\}$
		for a subset $D \subset X$ and $S=\Z$ we get a $(1,1)$-code, i.e.
		a binary function $m(D,z): \Z \to \{0,1\}$
		which generates a $\Z$-subshift of the Bernoulli shift on $\{0,1\}^{\Z}$.
		%		\item 
		%		In the particular case where
		%		$\chi_D: X \to \{0,1\}$
		%		is the characteristic function 		
		%		of a subset $D \subset X$ and $G=\Z$, we get a $(1,1)$-code.
		%		%		a binary function $m(D,z): \Z \to \{0,1\}$
		%		%		which generates a $\Z$-subshift of the Bernoulli shift on $\{0,1\}^{\Z}$.
	\end{enumerate}
\end{defin}

\sk 

Regarding some dynamical and combinatorial aspects of coding functions 
see \cite{Fernique,BFZ}. 

Among others we will study the following question 

\begin{question} \label{q:coding} 
	When is a coding $\varphi$ function tame? 
	Equivalently, 
	when is the associated transitive subshift system $S_{\varphi} \subset \{0,1\}^{\Z}$ with $\varphi =	m(D,z)$  tame? 
\end{question}

Some restrictions on $D$ are really necessary because \emph{every} binary bisequence 
$\varphi: \Z \to \{0,1\}$ can be encoded as $\varphi=	m(D,z)$. 

It follows from results in \cite{GM-rose} that a coding bisequence
$c: \Z \to \R$ is tame iff it can be represented
as a generalized matrix coefficient of a Rosenthal Banach space representation.
That is, iff there exist: a Rosenthal Banach space $V$, a linear isometry $\s \in \Iso(V)$ and
two vectors $v \in V$, $\varphi \in V^*$ such that
$$
c_n=\langle \s^n(v),\varphi \rangle = \varphi(\s^n(v)) \ \ \ \ \forall n \in \Z.
$$

\sk 

Let, as above, $A^S$ be the full symbolic shift $S$-system.  
For a nonempty $L \subseteq S$ define the natural projection $$\pi_L: A^S \to A^{L}.$$
The compact zero-dimensional space $A^S$ is metrizable iff $S$ is countable 
(and, in this case, $A^S$ is homeomorphic to the Cantor set).

It is easy to see
that the full shift system $\Omega=A^S$ (hence also every subshift) is  \emph{uniformly expansive}. This means that
there exists an entourage $\eps_0 \in \mu$ in the natural uniform structure of $A^S$ such that
for every distinct $\omega_1 \neq \omega_2$ in $\Omega$ one can find
$s \in S$ with
$(s\omega_1, s\omega_2) \notin \eps_0$. Indeed, take
$$\eps_0:=\{(u,v) \in \Omega \times \Omega: \  u(e)=v(e)\},$$
where $e$, as usual, is the neutral element of $S$.

\begin{lem} \label{subshiftsAREcyclic}
Every symbolic  dynamical $S$-system $X \subset \Omega=A^S$ is cyclic (Definition \ref{d:cyclic}).
\end{lem}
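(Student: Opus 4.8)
The plan is to exhibit a single function $f \in C(X)$ whose $S$-orbit $fS$ separates the points of $X$, and then to read off cyclicity from Lemma~\ref{l:Prop}. The natural candidate is the evaluation at the neutral element, $f := (\pi_e)|_X$, i.e. $f(\omega) = \omega(e)$ for $\omega \in X$; this is continuous and real valued since $A \subset \R$.

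The one computation I would carry out is the identification of the orbit. Using the action formula $(s\omega)(t) = \omega(ts)$, one has $(fs)(\omega) = f(s\omega) = (s\omega)(e) = \omega(es) = \omega(s) = (\pi_s)(\omega)$, so that $fS = \{(\pi_s)|_X : s \in S\}$ is exactly the family of all coordinate projections restricted to $X$. The only point needing care here is keeping the direction of the composition straight, since $(fs)(x) = f(sx)$.

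Next I would invoke the uniform expansiveness already recorded above, with the entourage $\eps_0 = \{(u,v) : u(e) = v(e)\}$. By definition this says that for any distinct $\omega_1 \neq \omega_2$ in $X$ there is an $s \in S$ with $(s\omega_1)(e) \neq (s\omega_2)(e)$, i.e. $(fs)(\omega_1) \neq (fs)(\omega_2)$; equivalently, two distinct elements of $A^S$ differ in some coordinate $s$, which is separated by $\pi_s$. Hence the orbit $fS$ separates the points of $X$.

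To finish, I would apply Lemma~\ref{l:Prop} with $Y := X$ and $\ga := \id_X$. The function $f$ trivially comes from the identity compactification (take $\bar f = f$), so the lemma yields a continuous onto $S$-map $q : X \to X_f$ with $f = \widetilde{f} \circ q$; since $fS$ separates the points of $X$, part~(2) of the lemma shows that $q$ is an isomorphism of $S$-compactifications. Equivalently, the unital closed $S$-invariant subalgebra $\A_f = \lan fS \ran$ then separates points and contains the constants, so by the Stone--Weierstrass theorem $\A_f = C(X)$ and its Gelfand space $X_f$ is $X$ itself. Either way, Definition~\ref{d:cyclic} is satisfied and $X$ is cyclic. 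The argument is immediate once the orbit is computed, so there is no real obstacle beyond that bookkeeping.
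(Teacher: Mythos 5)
Your proof is correct and takes essentially the same route as the paper: both choose $f := \pi_e$, compute that $fS = \{\pi_s\}_{s \in S}$ is the family of coordinate projections, observe that this family separates the points of $X$, and conclude cyclicity via the Stone--Weierstrass theorem. Your explicit orbit computation $(fs)(\omega) = \omega(s)$ and the appeal to Lemma~\ref{l:Prop} with $\ga = \id_X$ are just a more detailed spelling-out of the identification $(S,X) \cong (S,X_f)$ that the paper states directly.
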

\begin{proof} It suffices to find $f \in C(X)$ such that the orbit $fS$ separates the points of $X$ since then, by the Stone-Weierstrass theorem, $(S,X)$ is isomorphic to its cyclic $S$-factor $(S,X_f)$.
The family
$$\{\pi_{s}: X \to A=\{0,1, \dots,n\} \subset \R\}_{s \in S}$$
of basic projections clearly separates points on $X$ and we let
$f:=\pi_e: X \to \R$.
Now observe that $fS=\{\pi_{s}\}_{s \in S}$.
\end{proof}

A topological space $(X,\tau)$ is {\em scattered\/} (i.e., every
nonempty subspace has an isolated point) iff $X$ is
$(\tau,\xi)$-fragmented, for arbitrary uniform structure $\xi$ on the \emph{set} $X$.

\begin{prop} \label{p:scattered} \cite[Prop. 7.15]{Me-nz}
Every scattered compact jointly continuous $S$-space $X$ is RN (that is, Asplund representable). 
\end{prop}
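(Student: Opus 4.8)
The plan is to reduce the proposition to the single assertion that the family of translations
$$
\tilde S=j(S)=\{\tilde s:X\to X\}_{s\in S}
$$
is a \emph{fragmented family} of self-maps of $X$. First I would invoke the enveloping-semigroup reformulation recalled in Section \ref{s:classes}: $(S,X)$ is HNS if and only if $\tilde S$ is a fragmented family; and a compact $S$-system is RN (Asplund representable) exactly when it is HNS. (I use here the general equivalence $\mathrm{HNS}\iff\mathrm{RN}$ from \cite{GM1,GMU}; for metric $X$ this is precisely the statement quoted in the text, and it is this equivalence that Proposition \ref{p:scattered} is built on.) Thus it suffices to prove that $\tilde S$ is fragmented.

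The main device I would use is to encode the \emph{simultaneity over all} $s\in S$ demanded by fragmentedness of a family into one uniformity on the \emph{set} $X$. Let $\mu$ be the unique compatible uniformity of the compactum $X$. For $\eps\in\mu$ I would set
$$
\eps^S:=\{(x,y)\in X\times X:\ (sx,sy)\in\eps\ \text{ for all }s\in S\}=\bigcap_{s\in S}(\tilde s\times\tilde s)^{-1}(\eps),
$$
and then check that $\{\eps^S:\eps\in\mu\}$ is a base for a uniformity $\xi$ on the set $X$ (concretely, the uniformity of uniform convergence pulled back along the diagonal embedding $x\mapsto(sx)_{s\in S}$ of $X$ into $(X,\mu)^S$). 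The verification is routine: each $\eps^S$ contains the diagonal; the family is closed under finite intersection because $\eps^S\cap\delta^S=(\eps\cap\delta)^S$; symmetry of $\eps$ passes to $\eps^S$; and $\delta\circ\delta\subseteq\eps$ forces $\delta^S\circ\delta^S\subseteq\eps^S$, since $(sx,sz),(sz,sy)\in\delta$ for every $s$ yields $(sx,sy)\in\eps$ for every $s$.

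Next I would apply the scattered criterion recalled immediately before the statement: since $X$ is scattered, it is $(\tau,\xi)$-fragmented for this $\xi$ (indeed for every uniformity on the set $X$). Unwinding the definition against the entourage $\eps^S$ gives the conclusion directly: for any nonempty $A\subseteq X$ and any $\eps\in\mu$ there is a $\tau$-open $O$ with $O\cap A\neq\emptyset$ and $O\cap A$ being $\eps^S$-small. By the very definition of $\eps^S$, this says precisely that $\tilde s(O\cap A)$ is $\eps$-small for \emph{every} $s\in S$ at once, which is exactly the condition of Definition \ref{def:fr}.2 for $\tilde S$ to be a fragmented family. Hence $(S,X)$ is HNS, and therefore RN.

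The hard part will be the construction of $\xi$ rather than its application. The naive initial uniformity generated by $\{\tilde s\}_{s\in S}$ only controls finitely many translations simultaneously, which is strictly weaker than what fragmentedness of a family requires; passing to the full intersection $\eps^S$ repairs this, and the two identities $\eps^S\cap\delta^S=(\eps\cap\delta)^S$ and $\bigl(\delta\circ\delta\subseteq\eps\Rightarrow\delta^S\circ\delta^S\subseteq\eps^S\bigr)$ are exactly what keep this full intersection a genuine uniformity, so that the scattered hypothesis can be brought to bear. As an alternative to the final reduction step, once $\tilde S$ is known to be fragmented one could instead quote the representation results following Example \ref{ex:tame} to assemble an explicit Asplund representation of $(S,X)$.
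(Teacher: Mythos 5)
Your second step is correct and nicely carried out: pulling back the uniformity of uniform convergence along $x \mapsto (sx)_{s\in S}$ does give a uniformity $\xi$ on the \emph{set} $X$, and the scattered criterion quoted just before the proposition (scattered iff $(\tau,\xi)$-fragmented for \emph{every} uniformity $\xi$ on the set $X$) then unwinds to exactly the simultaneous smallness condition of Definition \ref{def:fr}.2 for the family $\tilde S$. So you have proved that a scattered compact jointly continuous $S$-space is HNS; this is essentially the paper's own argument for the implication (2) $\Rightarrow$ (3) of Theorem \ref{f}, run in reverse.

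The gap is your very first reduction, $\mathrm{RN} \Leftrightarrow \mathrm{HNS}$. The paper (and the sources \cite{GM1,GMU} it cites) state this equivalence only for \emph{metrizable} systems: ``A metrizable $S$-system $X$ is HNS iff $E(X)$ is metrizable iff $(S,X)$ is Asplund representable.'' In general only $\mathrm{RN} \Rightarrow \mathrm{HNS}$ is available; beyond the metrizable case HNS is only known to give RN-\emph{approximability} (being a subdirect product of RN systems), not a single faithful Asplund representation. And the proposition genuinely needs the non-metrizable case: a scattered metrizable compactum is just a countable compact space, whereas Theorem \ref{f}, whose implication (3) $\Rightarrow$ (1) is the whole reason Proposition \ref{p:scattered} exists, treats subshifts $X \subset A^S$ which are non-metrizable whenever $S$ is uncountable. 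So contrary to your parenthetical remark, the proposition is \emph{not} built on the HNS/RN equivalence; the paper's proof bypasses HNS entirely: by the Namioka--Phelps theorem a compactum $X$ is scattered iff $C(X)$ is an Asplund Banach space, and then the canonical representation of $(S,X)$ on $V := C(X)$ --- $S$ acting by the isometries $f \mapsto fs$, and $X$ embedded $S$-equivariantly and weak-star continuously into $V^*$ via Dirac measures $x \mapsto \delta_x$ --- is a faithful representation on an Asplund space, i.e.\ $(S,X)$ is RN. Your fallback of ``quoting the representation results following Example \ref{ex:tame}'' does not close the gap either: those results represent a bounded \emph{family of functions} by a pair of maps $(\nu,\alpha)$ into $V \times V^*$, with no equivariance built in, so they do not by themselves yield a representation of the dynamical system $(S,X)$.
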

\begin{proof}
A compactum $X$ is scattered iff $C(X)$ is Asplund, \cite{NP}. Now use the canonical 
$S$-representation  
%280817 $S \to \Theta(V)_s,  \a: X \hookrightarrow B^*$  
of $(S,X)$ on the Asplund space $V := C(X)$.
\end{proof}

The following result recovers and extends
\cite[Sect. 10]{GM1} and \cite[Sect. 7]{Me-nz}.

\begin{thm} \label{f}
For a discrete monoid $S$ and a finite alphabet $A$ let $X \subset A^S$ be a subshift.
The following conditions are equivalent:
\begin{enumerate}
\item
$(S,X)$ is Asplund representable (that is, RN).
\item $(S,X)$ is HNS.
\item $X$ is scattered.

\sk

\nt If, in addition, $X$ is metrizable (e.g., if $S$ is countable) then
 each of the conditions above is equivalent also to:

\item $X$ is countable.
\end{enumerate}
\end{thm}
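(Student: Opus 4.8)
The plan is to run the cycle (3) $\Rightarrow$ (1) $\Rightarrow$ (2) $\Rightarrow$ (3) among the first three conditions, and then to handle (3) $\Leftrightarrow$ (4) separately under the metrizability hypothesis via Cantor--Bendixson theory. The one geometric input that separates subshifts from arbitrary systems is the uniform expansiveness recorded just before the statement: the entourage
$$
\eps_0 = \{(u,v) \in \Omega \times \Omega : \ u(e) = v(e)\}
$$
lies in the natural uniformity $\mu$ of $A^S$, and the shift action satisfies $(s\omega)(e) = \omega(es) = \omega(s)$. I would note at the outset that $X$, being a closed $S$-invariant subset of the compact zero-dimensional $S$-space $A^S$, is itself a compact jointly continuous $S$-space, so all the background machinery applies to it.

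For (3) $\Rightarrow$ (1) there is nothing new to do: a scattered compact jointly continuous $S$-space is RN by Proposition \ref{p:scattered}, applied verbatim to $X$. For (1) $\Rightarrow$ (2) I would simply invoke the general equivalence between Asplund representability (RN) and HNS for compact $S$-systems \cite{GM1,GMU}; this direction uses no subshift-specific information and serves only to close the cycle once (2) $\Rightarrow$ (3) is established.

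The heart of the proof is (2) $\Rightarrow$ (3). Assume $(S,X)$ is HNS, which by the enveloping-semigroup reformulation recorded earlier means that $\tilde S = \{\tilde s : s \in S\}$ is a \emph{fragmented family} of self-maps of $X$ with respect to $\mu$. Fix an arbitrary nonempty closed $A \subseteq X$ and apply the fragmentation condition to $A$ with the \emph{single} entourage $\eps_0$: one obtains a nonempty open $O \subseteq X$ with $O \cap A \neq \emptyset$ such that $\tilde s(O \cap A)$ is $\eps_0$-small \emph{simultaneously} for every $s \in S$. Unwinding $\eps_0$-smallness, for all $x,y \in O \cap A$ and all $s \in S$ we get $(sx)(e) = (sy)(e)$, that is $x(s) = y(s)$; letting $s$ range over all of $S$ forces $x = y$. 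Hence $O \cap A$ is a singleton, whose point is isolated in $A$. Since $A$ was an arbitrary nonempty closed subset, $X$ contains no nonempty perfect subset and is therefore scattered. The hard part is precisely here: the subshift hypothesis enters exactly through expansiveness, which lets a single entourage $\eps_0$ collapse $O \cap A$ to a point, and the simultaneity over all $s \in S$ built into the fragmented-family definition is what makes the collapse valid. (A general HNS system need not be scattered, so expansiveness is genuinely used.)

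Finally, for the metrizable case I would prove (3) $\Leftrightarrow$ (4) by Cantor--Bendixson. A compact metrizable scattered space is countable: iterating the Cantor--Bendixson derivative strips off the isolated points (a countable set, since $X$ is second countable) at each stage, and for a scattered space the process terminates at the empty set after countably many steps, exhibiting $X$ as a countable union of countable sets. Conversely, a countable compact Hausdorff space can contain no nonempty perfect subset, since such a subset would be an uncountable complete (metric) space by Baire category; hence a countable $X$ is scattered. This yields (3) $\Leftrightarrow$ (4) and completes the equivalences.
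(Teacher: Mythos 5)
Your proposal is correct and takes essentially the same route as the paper: the same three implications --- (1)$\Rightarrow$(2) quoted from \cite{GM1}, (2)$\Rightarrow$(3) by letting uniform expansiveness (the entourage $\eps_0$) collapse the sets $\tilde{s}(O\cap A)$ appearing in the fragmented-family definition of HNS, and (3)$\Rightarrow$(1) via Proposition \ref{p:scattered} --- together with the standard Cantor--Bendixson argument for (3)$\Leftrightarrow$(4) in the metrizable case. The only nitpick is that for (1)$\Rightarrow$(2) you should cite just the one-directional fact that RN implies HNS (this is \cite[Lemma 9.8]{GM1}, and it is all your cycle needs), rather than a ``general equivalence'' of RN and HNS, which the paper asserts only for metrizable systems and which would otherwise render the equivalence of (1) and (2) vacuous for subshifts.
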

\begin{proof}
(1) $\Rightarrow$ (2):  It was proved in \cite[Lemma 9.8]{GM1}. 

 (2) $\Rightarrow$ (3):
Let $\mu$ be the natural uniformity on $X$ and $\mu_S$ the (finer) uniformity of uniform convergence on $X \subset X^S$
(we can treat $X$ as a subset of $X^S$ under the assignment $x \mapsto \hat{x}$, where $\hat{x}(s)=sx$).
If $X$ is HNS then the family $\tilde{S}$ is fragmented. This means that $X$ is $\mu_S$-fragmented.
As we already mentioned, every subshift $X$ is uniformly $S$-expansive.
 Therefore, $\mu_S$ coincides with the discrete uniformity $\mu_{\Delta}$ on $X$ (the largest possible uniformity on the \emph{set} $X$).
 Hence, $X$ is also $\mu_{\Delta}$-fragmented.
This means that $X$ is a scattered compactum. 
% because a topological space $(X,\tau)$ is {\em scattered\/} (i.e., every
%nonempty subspace has an isolated point) iff $X$ is
%$(\tau,\xi)$-fragmented, for arbitrary uniform structure $\xi$ on the \emph{set} $X$.

 (3) $\Rightarrow$ (1): 
 Use Proposition \ref{p:scattered}.

\sk
If $X$ is metrizable then

 (4) $\Leftrightarrow$ (3): A scattered compactum is metrizable iff it is countable.
\end{proof}

%In general, any scattered compact dynamical system is HNS (and even, RN) as it follows by the proof of the implication
%(3) $\Rightarrow$ (1) in Theorem \ref{f}.
%Every zero-dimensional HNS compact $\Z$-system $X$ is embedded into a topological $\Z$-product $\prod X_f$ of (cyclic) subshifts
%($X_f$ (where, one may consider only continuous functions $f: X \to \{0,1\}$) of the Bernoulli system $\{0,1\}^{\Z}$.
%%
Every zero-dimensional compact $\Z$-system $X$  
can be embedded into a product $\prod X_f$ of (cyclic) subshifts
$X_f$ (where, one may consider only continuous functions $f: X \to \{0,1\}$) of the Bernoulli system $\{0,1\}^{\Z}$.

\sk

For more information about countable
(HNS and WAP) 
subshifts see \cite{Sh, C-W, AkGl-WAP}.

\begin{problem}
Find a nice characterization for WAP (necessarily, countable) $\Z$-subshifts.
\end{problem}

Next we consider tame subshifts.

\begin{thm} \label{subshifts}
Let $X$ be a subshift of $\Omega=A^S$.
The following conditions are equivalent:

\ben
\item
$(S,X)$ is a tame system.
\item
For every infinite subset $L \subseteq S$ there exists an infinite subset
$K \subseteq L$
and a countable subset $Y \subseteq X$ such that
$$
 \pi_{K}(X)=\pi_{K}(Y).
$$
That is,
$$
\forall x=(x_s)_{s \in S} \in X,  \    \exists y=(y_s)_{s \in S} \in Y \quad {\text{with}}\quad
 x_{k}=y_{k} \ \ \forall k \in K.
$$
\item
For every infinite subset $L \subseteq S$ there exists an infinite subset $K \subseteq L$
such that
$\pi_{K}(X)$ is a countable subset of $A^K$.
\item
$(S,X)$ is Rosenthal representable (that is, WRN).
\een
\end{thm}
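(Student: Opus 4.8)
The plan is to reduce all four conditions to a single generating function and then invoke Theorem \ref{t:tame-f}. By Lemma \ref{subshiftsAREcyclic} the subshift $X$ is cyclic, realized through $f := \pi_e \in C(X)$, so that $X \cong X_f$ and $fS = \{\pi_s\}_{s \in S}$. The crucial preliminary observation is a direct computation from the action formula $(s\omega)(t) = \omega(ts)$: for all $g \in S$ and $x \in X$ one has $f(gx) = (gx)(e) = x(g) = \pi_g(x)$. This identity is exactly what ties the abstract fragmentability machinery of Theorem \ref{t:tame-f} to the concrete projections $\pi_K$ appearing in conditions (2) and (3).

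First I would establish (1) $\iff$ (4). Since $X$ is cyclic via $f$, tameness of the system $(S,X)$ is equivalent to tameness of the single function $f$: one direction is immediate from Definition \ref{d:tameDS} (as $f \in C(X)$), and the converse uses that if $f \in \Tame(X)$, so that $f$ comes from some tame compactification $\gamma : X \to W$, then by Lemma \ref{l:Prop}.1 the cyclic system $X_f$ is a factor of the tame system $W$, hence tame, and $X \cong X_f$. Combining this with the equivalence (1) $\Leftrightarrow$ (6) of Theorem \ref{t:tame-f} and $X \cong X_f$ identifies ``$f \in \Tame(X)$'' with ``$X$ is Rosenthal representable'', which yields (1) $\iff$ (4).

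The heart of the argument is the passage between the analytic condition (5) of Theorem \ref{t:tame-f} and the combinatorial condition (3). Using $f(gx) = x(g)$, the relevant pseudometric becomes $\rho_{f,A'}(x,y) = \sup_{g \in A'}|x(g) - y(g)|$. Because the alphabet $A$ consists of distinct integers, $|x(g)-y(g)|$ equals $0$ when $x(g)=y(g)$ and is $\geq 1$ otherwise; hence $\rho_{f,A'}(x,y) = 0$ precisely when $\pi_{A'}(x) = \pi_{A'}(y)$, and $\rho_{f,A'}(x,y) \geq 1$ otherwise. Thus the metric identification of $(X,\rho_{f,A'})$ is a uniformly discrete space in bijection with $\pi_{A'}(X)$, and a uniformly discrete metric space is separable if and only if it is countable. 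Therefore $\rho_{f,A'}$ is separable iff $\pi_{A'}(X)$ is countable, so that condition (5) of Theorem \ref{t:tame-f} translates verbatim into condition (3) restricted to \emph{countable} infinite $L$.

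Finally I would dispatch the remaining bookkeeping. To upgrade ``(3) for countable $L$'' to ``(3) for arbitrary infinite $L$'', given any infinite $L$ I pass to a countable infinite subset $A \subseteq L$ and apply the countable case inside $A$, obtaining an infinite $K \subseteq A \subseteq L$ that works. The equivalence (2) $\iff$ (3) is routine: (2) $\Rightarrow$ (3) holds because $\pi_K(Y)$ is a countable image, and (3) $\Rightarrow$ (2) follows by selecting one preimage in $X$ for each of the countably many points of $\pi_K(X)$ to form $Y$. The main obstacle I anticipate is twofold: verifying cleanly that tameness of the cyclic system is detected by the single function $f = \pi_e$ (which requires the factor argument above together with closure of the tame class under factors), and making the separability-versus-countability identification rigorous. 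The discreteness of the alphabet is precisely what makes the latter work, but one must be careful to argue that separability of the pseudometric space $(X,\rho_{f,A'})$ is equivalent to countability of its uniformly discrete metric quotient.
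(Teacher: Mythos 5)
Your proposal is correct and follows essentially the same route as the paper: reduce everything to the single generating function $f=\pi_e$ via the cyclicity Lemma \ref{subshiftsAREcyclic}, apply Theorem \ref{t:tame-f} (items (5) and (6)), and use the discreteness of the alphabet $A$ to convert separability of the pseudometric $\rho_{f,K}$ into countability of $\pi_K(X)$. The only cosmetic differences are that you verify the ``tame system $\iff$ tame $f$'' reduction by a factor argument (Lemma \ref{l:Prop}.1 plus closure of tameness under factors) where the paper invokes $C(X)=\A_f$, and that you make explicit the bookkeeping between countable and arbitrary infinite $L$ which the paper leaves implicit.
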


\begin{proof}
(1) $\Leftrightarrow$ (2): As in the proof of Lemma \ref{subshiftsAREcyclic}
define $f:=\pi_e \in C(X)$. Then $X$ is isomorphic to the cyclic $S$-space $X_f$. 
$(S,X)$ is a tame system iff $C(X)=\Tame(X)$. By Lemma \ref{subshiftsAREcyclic}, $C(X)=\A_f$,
so we have only to show that $f \in \Tame(X)$.

By Theorem \ref{t:tame-f}, $f:=\pi_e: X \to \R$ is a tame function iff
for every infinite subset 
$L \subset S$ there exists a countable infinite subset 
$K \subset L$ such that the corresponding pseudometric
$$
\rho_{f, K}(x,y):=
\sup_{k \in K} \{| (\pi_e)(kx) - (\pi_e)(ky)|\}
= \sup_{k \in K}\{|x_{k} - y_{k}|\}
$$
on $X$ is separable.
The latter assertion means that there exists a countable subset $Y$ which is
$\rho_{f, K}$-dense in $X$.
Thus for every $x \in X$ there is a point $y \in Y$ with $\rho_{f, K}(x,y) < 1/2$.
As the values of the function $f=\pi_{0}$ are in the set $A$, we
conclude that  $\pi_{K}(x)=\pi_{K}(y)$, whence
$$
\pi_{K}(X)=\pi_{K}(Y).
$$

The equivalence of (2) and (3) is obvious.

(1) $\Rightarrow$ (4): $(S,X)$ is Rosenthal-approximable 
(Theorem \ref{t:tame-f}.1).
%(Theorem \ref{t:tame=R-repr}.1).
%www
On the other hand, $(S,X)$ is cyclic (Lemma \ref{subshiftsAREcyclic}).
By Theorem \ref{t:tame-f}.7 we can conclude that $(S,X)$ is WRN.
%www

(4) $\Rightarrow$ (1): Follows directly by Theorem 
\ref{t:tame-f}.1. 
\end{proof}

\begin{remark}
From Theorem \ref{subshifts}  we can deduce the following peculiar fact.
If $X$ is a tame subshift of $\Omega=\{0,1\}^{\Z}$ and $L \subset \Z$ an infinite set,
then there exist an infinite subset $K \subset L$,  $k \ge 1$, and
$a \in \{0,1\}^{2k+1}$ such that
$ X \cap [a] \not=\emptyset$ and
$\forall x, x' \in X \cap [a]$ we have $x|_K = x'|_K$. Here
$[a] =\{z \in \{0,1\}^\Z : z(j) = a(j),\ \forall |j| \le k\}$.
In fact,
since $\pi_K(X)$ is a countable closed
set it contains an isolated point, say $w$, and then the open set
$\pi_K^{-1}(w)$ contains a subset $[a] \cap X$ as required.
\end{remark}

\subsection{Tame and HNS subsets of $\Z$}

We say that a subset $D \subset \Z$ is \emph{tame} if
the characteristic function $\chi_D: \Z \to \R$ is a tame function on the group $\Z$.
That is, when this function \emph{comes}
from a pointed compact tame
$\Z$-system $(X,x_0)$.
Analogously, we say that $D$ is {\em HNS} (or \emph{Asplund}),  \emph{WAP}, or
\emph{Hilbert} if $\chi_D: \Z \to \R$ is an Asplund, WAP or Hilbert function on $\Z$, respectively.
By basic properties of the {\em cyclic system} $X_D : =\cls \{\chi_D \circ T^n : n \in \Z\} \subset \{0,1\}^\Z$
(see Remark \ref{r:cycl-comes}), %\cite[Remark 2.3.1]{GM1}),
the subset $D \subset \Z$ is tame (Asplund, WAP)
iff the associated subshift $X_D$ is tame (Asplund, WAP).

Surprisingly it is
not known whether
$X_f: =\cls \{f \circ T^n : n \in \Z\} \subset \R^\Z$
is a Hilbert system when $f: \Z \to \R$ is a Hilbert function (see \cite{GW-Hi}).
The following closely related question from \cite{Me-opit} is also open:
Is it true that Hilbert representable compact metric $\Z$-spaces are closed under factors?

\begin{remark} \label{r:Rup}
The definition of WAP sets was introduced by Ruppert \cite{Rup-WAPsets}.
He has the following characterisation (\cite[Theorem 4]{Rup-WAPsets}):
\sk

$D \subset \Z$ is a WAP subset if and only if every infinite subset $B \subset \Z$ contains a finite
subset $F \subset B$ such that the set
$$
\bigcap_{b \in F} (b+D) \setminus \bigcap_{b \in B \setminus F} (b+D)
$$
is finite.
See also \cite{Gl-tf}.
%270714 Eli, do you see if this criterion (of WAP subsets) and Theorem 5.9 (Asplund subsets) are comparable ?
%I mean to derive "directly" that every WAP subset in $\Z$ is Asplund !
% uuu
%I think it is not hard to see that Rupert's condition implies that the orbit closure of 1_D is countable.
%But I didn't work out the details.
\end{remark}
 
\begin{thm} \label{tame subsets in Z}  
%\footnote{I hope that Theorems \ref{tame subsets in Z} and \ref{subshifts} would have some concrete applications. It makes me sad to think that I cannot even prove that classical Sturmian subshift is tame using these theorems. Maybe it is easy but I don't know how.}  
	Let $D$ be a subset of $\Z$. 
The following conditions are equivalent:
\ben
\item
$D$ is a tame subset (i.e., the associated subshift $X_D\subset \{0,1\}^{\Z}$ is tame).
\item
For every infinite subset $L \subseteq \Z$ there exists an infinite subset
$K \subseteq L$
and a countable subset $Y \subseteq \beta \Z$ such that
for every $x \in \beta \Z$ there exists $y \in Y$ such that
$$
n+D \in x \Longleftrightarrow n+D \in y \ \ \ \forall n \in K
$$
(treating $x$ and $y$ as ultrafilters on the set $\Z$). 
\een
\end{thm}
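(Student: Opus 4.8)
The plan is to reduce everything to the characterization of tame subshifts already established in Theorem \ref{subshifts}, applied to the particular subshift $X_D \subset \{0,1\}^{\Z}$ (so here $S=\Z$ and $A=\{0,1\}$). Recall that $X_D=\cls\{\chi_D \circ T^n : n \in \Z\}$ and that, by definition, $D$ is tame precisely when $X_D$ is a tame $\Z$-system. Thus, invoking the equivalence of (1) and (2) in Theorem \ref{subshifts}, it suffices to prove that condition (2) of the present statement is equivalent to the following condition $(\star)$: for every infinite $L \subseteq \Z$ there exist an infinite $K \subseteq L$ and a countable $Y_0 \subseteq X_D$ with $\pi_K(X_D)=\pi_K(Y_0)$.

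The bridge between the two formulations is a canonical map $\Phi : \beta\Z \to X_D$. First I would introduce $\Phi$ as the unique continuous extension to $\beta\Z$ of the orbit map $\iota : \Z \to X_D$, $n \mapsto \chi_D \circ T^n$, which exists by the universal property of the Stone--\v{C}ech compactification. Since $\iota(\Z)$ is dense in $X_D$ and $\Phi(\beta\Z)$ is compact, $\Phi$ is a continuous surjection. Next I would compute its coordinates: for a fixed $m \in \Z$ the map $n \mapsto (\chi_D \circ T^n)(m)=\chi_D(m+n)$ coincides with the characteristic function $\chi_{D-m}$ of the translate $D-m=(-m)+D$, whose continuous extension to $\beta\Z$ sends an ultrafilter $x$ to $1$ exactly when $(-m)+D \in x$. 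Hence
$$
\Phi(x)(m)=1 \iff (-m)+D \in x \qquad (x \in \beta\Z,\ m \in \Z),
$$
and therefore, for any index set $J \subseteq \Z$ and any $x,y \in \beta\Z$, we obtain the dictionary
$$
\pi_J(\Phi(x))=\pi_J(\Phi(y)) \iff \big( (-m)+D \in x \Leftrightarrow (-m)+D \in y \ \ \forall m \in J \big);
$$
substituting $n=-m$, the right-hand side reads ``$n+D \in x \Leftrightarrow n+D \in y$ for all $n \in -J$''.

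With this dictionary in hand the two conditions match up, the only bookkeeping being a harmless reflection of the index set, absorbed by the fact that both conditions quantify over \emph{all} infinite subsets (one applies them to $-L$ in place of $L$). For the direction (2)$\,\Rightarrow\,(\star)$, given infinite $L \subseteq \Z$ I would apply condition (2) to $-L$, obtaining $K$ and a countable $Y \subseteq \beta\Z$, then set $K_0:=-K \subseteq L$ and $Y_0:=\Phi(Y)$; surjectivity of $\Phi$ lets me write any $\xi \in X_D$ as $\Phi(x)$, and the dictionary produces some $\Phi(y) \in Y_0$ with $\pi_{K_0}(\xi)=\pi_{K_0}(\Phi(y))$, whence $\pi_{K_0}(X_D)=\pi_{K_0}(Y_0)$. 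Conversely, for $(\star)\,\Rightarrow\,$(2), given the subshift condition for $-L$ with witnesses $K_0 \subseteq -L$ and countable $Y_0 \subseteq X_D$, I would lift each point of $Y_0$ through $\Phi$ (again using surjectivity) to form a countable $Y \subseteq \beta\Z$, set $K:=-K_0 \subseteq L$, and read off the required ultrafilter agreement directly from the dictionary.

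The genuinely substantive content is concentrated in the coordinate computation and in the surjectivity of $\Phi$; everything else is the transport of Theorem \ref{subshifts} through this map. The main point to get right---and the only place where care is needed---is the exact form of the dictionary, namely that $\pi_K$-agreement of $\Phi(x)$ and $\Phi(y)$ is literally the statement that $x$ and $y$ contain the same translates $n+D$ as $n$ ranges over (the reflection of) $K$. Once this is pinned down, matching the countable witness sets $Y$ and $Y_0$ on the two sides is immediate from the surjectivity of $\Phi$, and the proof closes.
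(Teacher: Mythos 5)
Your proof is correct, but it follows a different route than the paper's. The paper never mentions Theorem \ref{subshifts} in its argument: it invokes the universality of the greatest ambit $(\Z,\beta\Z)$ to reduce the tameness of $D$ to the tameness of the single function $\chi_{\overline{D}}$ on the compact cascade $\beta\Z$, then applies the characterization of tame functions (Theorem \ref{t:tame-f}, condition (5)) directly on $\beta\Z$: for suitable $K\subseteq L$ the pseudometric $\rho_{f,K}$ must be separable, and since $f$ is $\{0,1\}$-valued, a countable set that is $\eps$-dense for $\eps<1$ must agree \emph{exactly} with every point along $K$, which is precisely the ultrafilter condition (2). You instead take the already-proved subshift characterization (Theorem \ref{subshifts}, (1)$\Leftrightarrow$(2)) applied to $X_D$ as your pivot, and transport it to $\beta\Z$ through the canonical continuous surjection $\Phi:\beta\Z\to X_D$ extending the orbit map, whose coordinate formula $\Phi(x)(m)=1\Leftrightarrow (-m)+D\in x$ you compute explicitly. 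The two arguments are cousins --- Theorem \ref{subshifts} was itself proved from Theorem \ref{t:tame-f}(5) --- but the decompositions genuinely differ: the paper works with the pseudometric on the non-metrizable space $\beta\Z$ and leans on the (unproved in loc.\ cit.) equivalence ``$\chi_D$ tame on $\Z$ iff $\chi_{\overline{D}}$ tame on $\beta\Z$'', while you only need the stated equivalence ``$D$ tame iff $X_D$ tame'' plus the Stone--\v{C}ech extension property, and your explicit dictionary makes transparent the sign/reflection bookkeeping (replacing $L$ by $-L$) that the paper silently elides when it passes from $f(nx)$ to ``$n+D\in x$''. The price you pay is the extra construction of $\Phi$ and the two-way transfer of countable witness sets via surjectivity; what you gain is a proof that is self-contained relative to the symbolic part of the paper and more careful about the identification on which everything rests.
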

\begin{proof}
By the universality of the greatest ambit $(\Z, \beta \Z)$ it suffices to check when
the function
$$
f=\chi_{\overline{D}}: \beta \Z \to \{0,1\}, \ f(x) =1 \Leftrightarrow x \in \overline{D},
$$
the natural extension function of $\chi_{D}: \Z \to  \{0,1\}$,
is tame (in the usual sense, as a function on the compact cascade $\beta \Z$),
where we denote by $\overline{D}$ the closure of $D$ in $\beta \Z$ (a clopen subset).
Applying Theorem \ref{t:tame-f} to $f$ we see that the following condition is both necessary and sufficient:
For every infinite subset $L \subseteq \Z$ there exists an infinite subset
$K \subseteq L$ and a countable subset $Y \subseteq \beta \Z$ which is dense in the
pseudometric space
$(\beta \Z, \rho_{f, K})$.
Now saying that $Y$ is dense is the same as the requiring that $Y$ be
$\eps$-dense for every $0< \eps < 1$.
However, as $f$ has values in $\{0,1\}$ and $0 <\eps <1$ we conclude that for every $x \in \beta\Z$ there is $y \in Y$ with
$$
x \in n+\overline{D}  \Longleftrightarrow y \in n+\overline{D}  \ \ \ \forall n \in K,
$$
and the latter is equivalent to
$$
n+D \in x \Longleftrightarrow n+D \in y \ \ \ \forall n \in K.
$$
\end{proof}

\begin{thm} \label{Asplund subsets in Z}
		%130116 
		Let $D$ be a subset of $\Z$. 
The following conditions are equivalent:
\ben
\item
$D$ is an Asplund subset (i.e., the associated subshift $X_D\subset \{0,1\}^{\Z}$ is Asplund).
\item
There exists a \emph{countable} subset $Y \subseteq \beta \Z$ such that
for every $x \in \beta \Z$ there exists $y \in Y$ such that
$$
n+D \in x \Longleftrightarrow n+D \in y \ \ \ \forall n \in \Z.
$$

\een
\end{thm}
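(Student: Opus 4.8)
The plan is to mirror, almost verbatim, the proof of Theorem \ref{tame subsets in Z}, replacing \emph{tameness} (eventual fragmentability) by the stronger \emph{Asplund property} (genuine fragmentability). First I would invoke the universality of the greatest ambit $(\Z,\beta\Z)$ to reduce the question to a single compact cascade: by definition $D$ is an Asplund subset if and only if $\chi_D:\Z\to\R$ is an Asplund function on $\Z$, and since every such function comes from the canonical extension over $\beta\Z$, this holds if and only if the natural extension $f=\chi_{\overline{D}}:\beta\Z\to\{0,1\}$ is an Asplund function on the compact cascade $\beta\Z$. Here $\overline{D}$ is the closure of $D$ in $\beta\Z$, a clopen set, exactly as in the opening move of the proof of Theorem \ref{tame subsets in Z}.

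Next I would apply the Asplund counterpart of Theorem \ref{t:tame-f}: $f\in\Asp(\beta\Z)$ if and only if the orbit family $f\Z=\{f\circ T^n:n\in\Z\}\subset C(\beta\Z)$ is a \emph{fragmented} family. By Corollary \ref{c:AspSet} (equivalently Theorem \ref{t:countDetermined}) this means that $f\Z$ is an Asplund family for $\beta\Z$, i.e. for every countable subfamily the associated pseudometric space is separable. Because $\Z$ is countable, $f\Z$ is itself a countable family, so the condition collapses to the separability of the \emph{single} pseudometric space $(\beta\Z,\rho_{f,\Z})$, where $\rho_{f,\Z}(x,y)=\sup_{n\in\Z}|f(T^nx)-f(T^ny)|$. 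This is exactly the point where the argument diverges from the tame case: there one only obtains separability of $\rho_{f,K}$ after thinning an arbitrary infinite $L$ to some infinite $K\subseteq L$, whereas genuine fragmentability yields separability for the full index set $\Z$ at once. This is precisely why statement (2) carries a single countable $Y$ together with the quantifier ``$\forall n\in\Z$'' rather than a nested ``$\exists K\subseteq L$''.

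Finally I would unwind the separability condition just as in Theorem \ref{tame subsets in Z}. Separability of $(\beta\Z,\rho_{f,\Z})$ means there is a countable $Y\subseteq\beta\Z$ that is $\rho_{f,\Z}$-dense, and since $f$ takes only the values $0$ and $1$, the inequality $\rho_{f,\Z}(x,y)<1$ forces $\rho_{f,\Z}(x,y)=0$, i.e. $f(T^nx)=f(T^ny)$ for all $n$. Reading $f(T^nx)=1\Leftrightarrow x\in n+\overline{D}\Leftrightarrow n+D\in x$ in terms of ultrafilters, the $\rho_{f,\Z}$-density of $Y$ says exactly that for every $x\in\beta\Z$ there is $y\in Y$ with $n+D\in x\Leftrightarrow n+D\in y$ for all $n\in\Z$, which is condition (2); the two directions of the equivalence then match the two directions of the separability statement.

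The main obstacle is the characterization used in the second paragraph: the excerpt records Theorem \ref{t:tame-f} only for the tame class, so I must supply its Asplund analogue, namely that $f\in\Asp(\beta\Z)$ is equivalent to fragmentability of the orbit family $f\Z$. For the forward implication I would pull a fragmented family back from an HNS compactification $\ga$ through which $f$ factors, using that fragmentability is inherited under composition with a continuous map together with the identity $fs=(\bar f s)\circ\ga$. For the converse I would pass to the cyclic compactification $\pi_f:\beta\Z\to X_f$ of Subsection \ref{s:cyclic}: fragmentability of $f\Z$ pushes forward along the surjection $\pi_f$ (the relevant pseudometrics agree, $\rho_{\tilde f\Z}(\pi_f x,\pi_f x')=\rho_{f\Z}(x,x')$), so $\tilde f\Z$ is a separating fragmented family on $X_f$, whence $X_f$ is HNS and $f$ is Asplund. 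Once this Asplund version of Theorem \ref{t:tame-f} is in place, the remainder is the routine $\{0,1\}$-valued bookkeeping described above.
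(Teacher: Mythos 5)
Your proposal is correct and takes essentially the same approach as the paper: the paper's own proof is a two-line sketch instructing one to repeat the proof of Theorem \ref{tame subsets in Z} with ``eventually fragmented'' in Theorem \ref{t:tame-f}(4) replaced by ``fragmented'', citing \cite{GM1} for the resulting characterization of Asplund functions. Your write-up simply fleshes out that sketch --- supplying a proof of the Asplund analogue of Theorem \ref{t:tame-f} that the paper delegates to \cite{GM1}, and correctly noting that countability of $\Z$ collapses the fragmentability criterion to separability of the single pseudometric $\rho_{f,\Z}$, which is exactly why condition (2) involves one countable set $Y$ and the quantifier $\forall n \in \Z$.
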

%m0404
\begin{proof} (Sketch) One can modify the proof of Theorem \ref{tame subsets in Z}.  Namely, if in assertion (4) of  Theorem \ref{t:tame-f} eventual fragmentability of $F$ is replaced by fragmentability then this characterization of Asplund functions, \cite{GM1} follows.  
\end{proof}

\begin{ex} \label{ex:AspnotWAP}
$\N$ is an Asplund subset of $\Z$ which is not a WAP subset.
In fact, let $X_{\N}$ be the corresponding subshift. Clearly $X_{\N}$ is homeomorphic
to the two-point compactification of $\Z$, with $\{\bf{0}\}$ and $\{\bf{1}\}$ as minimal subsets. Since a transitive WAP
system admits a unique minimal set, we conclude that $X_{\N}$ is not WAP 
(see e.g. \cite{Gl-03}).
On the other hand, since $X_{\N}$ is countable we can apply Theorem \ref{f} to show that it is HNS.
Alternatively, using Theorem \ref{Asplund subsets in Z}, we can take $Y$ to be $\Z \cup \{p,q\}$,
where we choose $p$ and $q$ to be any two non-principal ultrafilters such that $p$ contains $\N$
and $q$ contains $-\N$.
\end{ex}

\section{Entropy and null systems} \label{s:entropy}

We begin by recalling the basic definitions
of topological (sequence) entropy.
Let $(X,T)$ be a cascade,
i.e., a $\Z$-dynamical system, and $A=\{a_0<a_1<\ldots\}$ a
sequence of integers.
Given an open cover $\Ucal$ define
$$
h^A_{top}(T,\Ucal)=\limsup_{n\to\infty} \frac{1}{n}N(\bigvee
_{i=0}^{n-1}T^{-a_i}(\Ucal))
$$
The {\it topological entropy along the sequence $A$} is then defined by
$$
h^A_{top}(T)= \sup
\{h^A_{top}(T,\Ucal) :  \Ucal\ \text{an open cover of $X$}\}.
$$
When the phase space $X$ is zero-dimensional, one can replace
open covers by clopen partitions.
We recall that a dynamical system $(T,X)$ is called {\em null}
if $h^A_{top}(T) =0$ for every infinite $A \subset \Z$.
With $A = \N$ one retrieves the usual definition of topological entropy.
Finally when $Y \subset \{0,1\}^{\Z}$, and $A \subset \Z$
is a given subset of $\Z$, we say that $Y$ {\em is free on} $A$
or that $A$ {\em is an interpolation set for} $Y$,
if $\{y|_A : y \in Y\} = \{0,1\}^A$.

%3003  Eli, as I see Kerr-Li proved it in 2005 (earlier than Huang and Huang himself mentions it in his paper, p. 1561)
%By a theorem of Huang \cite{H} and Kerr and Li \cite{KL} 
By theorems of Kerr and Li \cite{KL05}, \cite{KL}
every null $\Z$-system is tame, and every tame system has zero topological entropy. 
From results of Glasner-Weiss \cite{GW} (for (1)) and Kerr-Li \cite{KL}
(for (2) and (3)), the following results can be easily deduced.
(See Propositions 3.9.2, 6.4.2 and 5.4.2 of \cite{KL} for the positive topological entropy,
the untame, and the nonnull claims, respectively.)

\begin{thm} \label{t:3}  \
\begin{enumerate}
\item
A subshift $X \subset \{0,1\}^{\Z}$ has positive topological entropy iff
there is a subset $A \subset \Z$ of positive density such that
$X$ is free on $A$.
\item
%3003  Eli, I checked Prop. 6.4.2 in Kerr-Li  but still I don't understand why the following item is true.
%It seems me too strong. For instance, as I think it (if it is true) is stronger than our Thm 5.4 for Z-subshifts.
A subshift $X \subset \{0,1\}^{\Z}$ is not tame iff
there is an infinite subset $A \subset \Z$ such that
$X$ is free on $A$.
\item
A subshift $X \subset \{0,1\}^{\Z}$ is not null iff for every
$n \in \N$ there is a finite subset $A_n \subset \Z$ with $|A_n| \ge n$
such that $X$ is free on $A_n$.
\end{enumerate}
\end{thm}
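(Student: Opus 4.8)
The plan is to translate the three statements into the language of \emph{independence sets} for a canonical pair of clopen sets, and then invoke the Kerr--Li independence theory together with the Glasner--Weiss entropy results cited in the text. Fix the subshift $X \subset \{0,1\}^{\Z}$ and let $C_0 := \{x \in X : x_0 = 0\}$ and $C_1 := \{x \in X : x_0 = 1\}$ be the two basic clopen cylinders cut out by the zeroth coordinate; they are disjoint and cover $X$. Recall that a set $A \subset \Z$ is an \emph{independence set} for the pair $(C_0,C_1)$ if $\bigcap_{a \in A} T^{-a} C_{\sigma(a)} \neq \emptyset$ for every $\sigma \in \{0,1\}^{A}$. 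Since $(T^a x)_0 = x_a$ for the shift $T$, we have $x \in T^{-a} C_{\sigma(a)}$ exactly when $x_a = \sigma(a)$, so this intersection is nonempty iff some $x \in X$ satisfies $x|_A = \sigma$. By compactness of $X$ and the finite intersection property, requiring this for all $\sigma$ is equivalent to $\pi_A(X) = \{0,1\}^A$, i.e.\ to \emph{$X$ being free on $A$}. Thus \textbf{the interpolation sets of $X$ are exactly the independence sets of the canonical pair $(C_0,C_1)$}, and this dictionary converts the right-hand side of each of (1)--(3) into a statement about the size of the independence sets of $(C_0,C_1)$.

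Next I would set up the reduction to this canonical pair. The sets of IE-, IT- and IN-tuples are invariant under $T \times T$, and a tuple is nondiagonal iff its two entries differ in some coordinate. Hence, given any nondiagonal tuple $(x,x')$, I translate by a suitable power of $T$ so that $x$ and $x'$ differ at coordinate $0$; then, after relabeling, $C_0$ is a neighborhood of one entry and $C_1$ of the other, so $(C_0,C_1)$ is a product neighborhood of the tuple. By the very definition of an IT- (resp.\ IN-, IE-) tuple, this forces $(C_0,C_1)$ to possess an independence set that is infinite (resp.\ arbitrarily large finite, of positive density). Conversely, if $(C_0,C_1)$ carries an independence set of the corresponding type, the Kerr--Li structure theory produces a nondiagonal tuple of the corresponding kind inside $\overline{C_0} \times \overline{C_1}$. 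Feeding this equivalence into the three cited results --- nontameness $\Leftrightarrow$ a nondiagonal IT-pair (\cite[Prop.~6.4.2]{KL}), nonnullness $\Leftrightarrow$ a nondiagonal IN-pair (\cite[Prop.~5.4.2]{KL}), and positive entropy $\Leftrightarrow$ a nondiagonal IE-pair (\cite[Prop.~3.9.2]{KL}, resting on \cite{GW}) --- yields exactly (2), (3) and (1) respectively. Statement (2) can alternatively be read off directly from Theorem~\ref{t:tame-f}: freeness on an infinite $A$ is precisely the failure of the separability condition (5) for $f = \pi_e$ along $A$.

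The main obstacle is making the passage to and from the \emph{specific} pair $(C_0,C_1)$ airtight in both directions. In the forward direction one must check that, after translating the difference to coordinate $0$, the cylinder pair is genuinely a product neighborhood of the tuple and that the independence set furnished by the tuple definition is literally an independence set for $(C_0,C_1)$ and not merely for some smaller neighborhood; this is exactly where the zero-dimensionality (clopenness of $C_0,C_1$) of a subshift is used. In the reverse direction the delicate point is the existence statement of the Kerr--Li theory: an infinite (resp.\ large finite, positive-density) independence set for a pair of disjoint closed sets does produce an honest nondiagonal IT- (resp.\ IN-, IE-) pair, and this is the substantive input I would simply quote. I expect part (1) to be the most technical, since the notion of a positive-density independence set is more fragile than mere infinitude or a prescribed finite cardinality; here I would lean most heavily on the quantitative Glasner--Weiss / Kerr--Li machinery and take care to specify which notion of density (upper Banach density) is intended.
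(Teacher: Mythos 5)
Your proposal is correct and takes essentially the same approach as the paper: the paper also proves part (2) by translating a nondiagonal Kerr--Li IT-pair (from \cite[Prop.~6.4.2]{KL}) so that its entries differ at coordinate $0$, observing that the clopen cylinder pair $U_i=\{z\in X : z(0)=i\}$ is then a product neighborhood whose infinite independence set is precisely an interpolation set, and remarks that parts (1) and (3) are similar. The only minor difference is the easy direction, where you invoke the Kerr--Li converse (independence sets yield IE/IN/IT-pairs) while the paper cites its Theorem \ref{subshifts} --- essentially the alternative route via Theorem \ref{t:tame-f} that you mention yourself.
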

%0104  Thms 1.6 seem to me strong results. Probably they can work in situations where 5.4 cannot.
%I think by 1.6.2 for every untame subshift the presence of a subsequence $A$ of $\Z$ such that $X$ is free on $A$
%destroys the balancedness of $X$ (deriving: every Sturmian is tame (or, even null by 1.6.3)). 
\begin{proof}
We consider the second claim; the other claims are similar.

Certainly if there is an infinite $A \subset \Z$ on which $X$ is free then
$X$ is not tame (e.g. use Theorem \ref{subshifts}).
Conversely, if $X$ is not tame then, by Propositions 6.4.2 of \cite{KL},
there exists a non diagonal IT pair $(x,y)$.
As $x$ and $y$ are distinct there is
an $n$ with, say, $x(n) =0, y(n)=1$. Since $T^n(x,y)$ is also an IT pair
we can assume that $n=0$. Thus $x \in U_0$ and $y \in U_1$, where these are
the cylinder sets $U_i = \{z \in X : z(0) = i\}, i=0,1$.
Now by the definition of an IT pair there is an infinite set $A \subset \Z$
such that the pair $(U_0,U_1)$ has $A$ as an independence set. This
is exactly the claim that $X$ is free on $A$.
\end{proof}

The following theorem was proved (independently) by
Huang \cite{H}, Kerr and Li \cite{KL}, and Glasner \cite{Gl-str}.
See \cite{Gl-17} for a recent generalization of this result.

%280817  Eli, you published a new improved version of the following thm. Do you have to update something here ? 
\begin{thm} \label{almost-auto}  \emph{(A structure theorem for minimal tame dynamical systems)}
Let $(G,X)$ be a tame minimal metrizable dynamical system with $G$ an
 abelian group.
Then:
\begin{enumerate}
\item
$(G,X)$ is an almost one to one extension $\pi : X \to Y$ of a minimal equicontinuous system $(G,Y)$.
\item
$(G,X)$ is uniquely ergodic and the factor map $\pi$ is, measure
theoretically, an isomorphism of the corresponding measure preserving
system on $X$ with the Haar measure on the equicontinuous factor $Y$.
\end{enumerate}
\end{thm}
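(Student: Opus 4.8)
The plan is to identify $Y$ as the maximal equicontinuous factor and then extract both conclusions from the smallness of the enveloping semigroup. Since $G$ is abelian and $(G,X)$ is minimal and metrizable, the regionally proximal relation $Q$ is a closed $G$-invariant equivalence relation, and $Y:=X/Q$, with factor map $\pi:X\to Y$, is the maximal equicontinuous factor; being minimal and equicontinuous, $(G,Y)$ is a compact monothetic abelian group on which $G$ acts by translations, so its normalized Haar measure $\nu$ is its unique invariant probability measure. Throughout I would use that, by tameness and metrizability, every $p\in E(X)$ is a Baire class $1$ (equivalently, fragmented) self-map of $X$, so each such $p$ has a dense $G_\delta$ of continuity points and $E(X)$ is a Rosenthal compactum (Theorem \ref{D-BFT}). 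The theorem then splits into the topological assertion that $\pi$ is almost one-to-one and the measure-theoretic assertion that $\pi$ is a measure isomorphism for \emph{every} invariant measure, which in particular forces unique ergodicity.

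For part (1) I would fix a minimal idempotent $u\in E(X)$. The map it induces on the equicontinuous factor lies in the group $E(Y)=Y$, and the only idempotent of a group is the identity, so $\pi\circ u=\pi$; hence $u$ preserves every fibre $\pi^{-1}(y)$ and acts there as a retraction, while $(x,ux)$ is always a proximal pair. The map $x\mapsto ux$ is Baire class $1$, so it has a residual set of continuity points, and the aim is to show that any such continuity point $x_0$ is an \emph{injectivity point}, i.e. $\pi^{-1}(\pi(x_0))=\{x_0\}$. The heart of the matter, and the step I expect to be the main obstacle, is the exclusion of ``fat'' fibres: if the set $\{x:\pi^{-1}(\pi(x))\neq\{x\}\}$ were non-meagre, one would separate two points of a common fibre along a sequence $g_n\in G$ and feed the resulting two-valued behaviour of $f:=\pi_e\in C(X)$ into the independence/$l_1$ machinery of Theorem \ref{f:sub-fr} (equivalently the Kerr--Li independence used in Theorem \ref{t:3}.2), producing an independent sequence in $fG$ and contradicting tameness. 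Once a single injectivity point is produced, the singleton-fibre locus is a $G$-invariant nonempty $G_\delta$ in $Y$, hence dense by minimality and therefore residual; this is exactly the almost one-to-one property.

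For part (2), let $\mu$ be any $G$-invariant measure on $X$. Then $\pi_*\mu$ is $G$-invariant on $Y$, so $\pi_*\mu=\nu$. The crux is to show that the non-injectivity locus $B:=\{y\in Y:\lvert\pi^{-1}(y)\rvert>1\}$ satisfies $\nu(B)=0$. This is the measure-theoretic analogue of the obstacle above, and is where tameness is genuinely needed (almost automorphy alone does not give unique ergodicity): a positive-measure set of non-trivial fibres would, via the invariance of $\mu$ and the group structure of $Y$, yield an infinite set $A\subseteq G$ on which the coding by $f$ is free, i.e. an independent sequence, again contradicting tameness (tame systems are null, Theorem \ref{t:3}). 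Granting $\nu(B)=0$, the canonical section $s:Y\setminus B\to X$ sending $y$ to the unique point of $\pi^{-1}(y)$ is defined $\nu$-almost everywhere and is independent of $\mu$; since $\mu(\pi^{-1}(B))=\nu(B)=0$, we get $s\circ\pi=\mathrm{id}$ $\mu$-a.e. and $\pi\circ s=\mathrm{id}$ $\nu$-a.e., so $\pi$ is a measure isomorphism of $(X,\mu)$ with $(Y,\nu)$. In particular $\mu=s_*\nu$ is forced, the same for every invariant $\mu$, which yields unique ergodicity and completes the proof.
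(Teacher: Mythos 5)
The first thing to note is that the paper contains no proof of Theorem \ref{almost-auto}: it is quoted as a known result, proved independently by Huang \cite{H}, Kerr--Li \cite{KL} and Glasner \cite{Gl-str}, so your attempt must be judged on its own merits. Its architecture is the standard one and the formal reductions are sound: $Y=X/Q$ is the right factor, a single injectivity point does yield residually many via the $G_\delta$-plus-minimality argument, and, granting $\nu(B)=0$, the section argument does give both the measure isomorphism and unique ergodicity (modulo invoking amenability of abelian groups for the existence of an invariant measure, which you should state). The genuine gap is that the two steps you yourself call ``the heart of the matter'' --- (i) that a continuity point of the minimal idempotent $u$ is an injectivity point, equivalently the exclusion of a non-meagre set of nontrivial fibres, and (ii) that the non-injectivity locus $B\subset Y$ has Haar measure zero --- are exactly the mathematical content of the theorem, and you dispose of both by an appeal to an unspecified ``independence/$l_1$ machinery''. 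This cannot be waved through: the presence of (even densely many) nontrivial fibres over the equicontinuous factor is perfectly compatible with tameness, as the Sturmian systems show, so there is no direct route from ``fat fibres'' to an independent sequence. Independence demands that \emph{every} finite disjoint pair $P,M$ be realized, i.e. $\bigcap_{n\in P}(fg_n)^{-1}(-\infty,a)\cap\bigcap_{n\in M}(fg_n)^{-1}(b,\infty)\neq\emptyset$, and manufacturing this combinatorial freeness from a non-meagre (respectively positive-measure) set of regionally proximal pairs is precisely what occupies the cited papers (Kerr--Li via IT-pairs, Huang and Glasner via delicate enveloping-semigroup and measure arguments). As written, your text proves neither (i) nor (ii).

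There are also two concrete errors. The parenthetical justification ``tame systems are null, Theorem \ref{t:3}'' inverts the actual implication: null implies tame, and the paper explicitly records tame systems that are not null (Kerr--Li's Toeplitz subshift, Example \ref{IP}.4), so no step may rely on nullness of a tame system; fortunately your argument only needs the no-independence characterization, but the citation as given is false. Relatedly, Theorem \ref{t:3} and the function $f:=\pi_e$ make sense only for subshifts; for an abstract compact metric $X$ one must work with a countable separating family in $C(X)$ and the characterization of Theorem \ref{f:sub-fr}. Finally, be aware that in Glasner's actual proof the invariant measure already enters the \emph{topological} half (compare the proof of Theorem \ref{pd} in the paper, where $\mu(vX)=1$ is used to force $\overline{vX}=X$), so the clean separation of your outline into a purely topological part (1) followed by a measure-theoretic part (2) is itself optimistic.
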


\begin{exs} \label{IP} \
\begin{enumerate} 
\item
According to Theorem \ref{almost-auto} the Morse minimal system,
which is uniquely ergodic and has zero entropy,
is nevertheless
%2302 Eli, saying "nevertheless" we hint about a contrast but we did not mention your result that every tame metric tame minimal Z-systems have zero topological entropy. Maybe better to mention it. One more point --- as I understand the minimality is essential as example in item (3) (here) shows.
%2302 one more place when that result on zero entropy is important to be mentioned is also PROOF of Thm 9.4.5  
 not tame as it fails to be an almost 1-1 extension of its adding machine factor.
We can therefore deduce that, a fortiori, it is not null.
\item
Let $L = IP\{10^t\}_{t=1}^\infty \subset \N$ be the IP-sequence generated by
the powers of ten, i.e.
$$
L =\{10^{a_1} + 10^{a_2} + \cdots + 10^{a_k} :
1 \le a_1 < a_2 < \cdots < a_k\}.
$$
Let $f= 1_L$ and let $X= \OC_\sigma(f) \subset \{0,1\}^\Z$, where $\sigma$ is
the shift on $\Om = \{0,1\}^\Z$.
The subshift $(\sigma,X)$ is not tame. In fact it can be shown that
$L$ is an interpolation set for $X$.
\item
  
Take $u_n$ to be the concatenation of the words $a_{n,i} 0^n$,
where $a_{n,i},\ i=1,2,3, \dots,2^n$ runs over $\{0,1\}^n$. Let $v_n = 0^{|u_n|}$,
$w_n = u_nv_n$ and $w_\infty$ the infinite concatenation
$\{0,1\}^\N \ni w_\infty = w_1w_2w_3\cdots$. Finally define $w \in \{0,1\}^\Z$
by $w(n) = 0$ for $n \le 0$ and $w(n) = w_\infty(n)$.
Then $X=\OC_\sigma(w) \subset \{0,1\}^\Z$ is a countable subshift, hence HNS
and a fortiori tame, but for an appropriately chosen sequence the sequence entropy of $X$ is $\log 2$.
Hence, $X$ is not null. 
Another example of a countable nonnull subshift can be found in \cite[Example 5.12]{H}. 
\item
In \cite[Section 11]{KL} Kerr and Li construct a Toeplitz subshift ( =  a minimal almost one-to-one extension
of an adding machine) which is tame but not null.
\item
In \cite[Theorem 13.9]{GY} the authors show that for interval maps
being tame is the same as being null.
\end{enumerate}
\end{exs}

\begin{remark}
Let $T: [0,1] \to [0,1]$ be a continuous self-map on the closed interval.
In an unpublished paper \cite{Mich+} the authors show that
the enveloping semigroup $E(X)$ of the cascade (an $\N \cup \{0\}$-system) $X=[0,1]$ is either metrizable or
it contains a topological copy of $\beta \N$.
The metrizable enveloping semigroup case occurs exactly when the system is HNS.
This was proved in \cite{GMU} for group actions but it remains true for semigroup actions,
\cite{GM-AffComp}.
The other case occurs iff $\sigma$ is Li-Yorke chaotic.
Combining this result with Example
\ref{IP}.5
one gets:  HNS\ =\ null\ = \ tame, for any cascade
$(T, [0,1])$.
\end{remark}

%3003  Eli, I think the following question is already answered by Kerr-Li (2007) in Section 11. A tame nonnull Toeplitz subshift
%\begin{question} \label{q:NotNull}
%Find an example of a transitive recurrent subshift which is tame (or even
%HNS, i.e. countable) but not null.
%\end{question}

\section{Some examples of tame functions and systems}
\label{s:tame-type}

In this section we give some methods for constructing tame systems and functions. 
It is closely related to the question whether given family of real (not necessarily, continuous)  functions is tame. 

\sk 
Recall (see for example \cite{BFZ}) that a bisequence $\Z \to \{0,1\}$ is \emph{Sturmian} if it is recurrent and has the minimal complexity $p(n)=n+1$.

%%%

\begin{ex} \label{ex:Sturm} \ 
	\ben 
	
	\item (See \cite{GM1}) Consider an irrational rotation $(R_\alpha, \T)$. Choose
	$x_0 \in \T$ and split each point of the orbit $x_n=x_0 + n \alpha$
	into two points  $x_n^{\pm}$.
	This procedure results is a %141118 {\em Sturmian} (symbolic) 
	dynamical system $(\s, X)$ which is a minimal almost 1-1 extension of $(R_\alpha,\T)$.
	Then $E(X,\s) \setminus \{\s^n\}_{n \in \Z}$ is homeomorphic to the two arrows space, a basic example of a non-metrizable Rosenthal
	compactum. It follows that $E(\s, X)$ is also a Rosenthal compactum. Hence, $(\s, X)$ is tame but not HNS.  
	\item 
	Let $P_0$ be the set
	$[0, c)$ and $P_1$ the set $[c, 1)$; let $z$ be a point in $[0, 1)$ (identified with $\T$) via the rotation $R_{\a}$ we get the binary bisequence $u_n$, $n \in \Z$
	defined by $u_n = 0$ when $R_{\a}^n(z) \in P_0, u_n = 1$ otherwise.
	These are called \emph{Sturmian like codings}.
	With $c = 1 - \al$ we 
	%141118 retrieve the previous example.
	get the classical Sturmian bisequences.
	For example, when $\a:=\frac{\sqrt{5}-1}{2}$ %and $c = 1 -\a$ 
	the corresponding sequence,
	computed at $z =0$, is called the \emph{Fibonacci bisequence}.
	\een 
	\end{ex}

\begin{ex} \label{e:tameNOThns} \
\ben
\item
In his paper \cite{Ellis93} Ellis, following Furstenberg's
classical work \cite{Furst-63-Poisson}, investigates the projective
action of $GL(n,\R)$
on the projective space $\mathbb{P}^{n-1}$. It follows from his
results that the corresponding enveloping semigroup is
not first countable. However,
in a later work \cite{Ak-98}, Akin studies the action of $G=GL(n,\R)$
on the sphere $\mathbb{S}^{n-1}$ and shows that here the enveloping
semigroup is first countable (but not metrizable).
It follows that the dynamical systems
$D_1=(G, \mathbb{P}^{n-1})$ and $D_2=(G, \mathbb{S}^{n-1})$
are tame but not HNS. Note that $E(D_1)$ is Fr\'echet,
being a quotient of a first countable compact space, namely $E(D_2)$.

%mm!!!!
\item (Huang \cite{H})
An almost 1-1 extension $\pi: X \to Y$ of
an equicontinuous metric $\Z$-system $Y$ with $X \setminus X_0$ countable,
where $X_0=\{x \in X : |\pi^{-1}\pi(x)|=1\}$, is tame.
\een
\end{ex}

We will see that many coding functions are tame, including some
multidimensional analogues of Sturmian sequences.
The latter are defined on the groups $\Z^k$ and instead of the characteristic function
$f:=\chi_D$ (with $D=[0,c)$) one may consider coloring of the space leading to shifts with finite alphabet.
%Here 
We give a precise definition which (at least in some partial cases)
was examined in several papers. 
Regarding some dynamical and combinatorial aspects of coding functions  
see for example \cite{BV,Fernique,Pikula}, and the survey paper \cite{BFZ}. 

%!!!!
\begin{defin} \label{d:mSturm} 
	Consider an arbitrary finite partition 
	$$\T=\cup_{i=0}^{d} [c_i,c_{i+1})$$ of $\T$ 
	by the ordered $d$-tuple of points $c_0=0,c_1, \dots, c_{d}, c_{d+1}=1$ and  
any coloring map
	$$
	f: \T \to A:=\{0, \dots ,d\}. %\ \ \ f(t)=i \ \text{iff} \ t \in [c_i,c_{i+1}).
	$$
	Now for a given $k$-tuple
	%(rotations)
	$(\a_1,\dots ,\a_k) \in \T^k$  and a given point $z \in \T$ consider the corresponding coding function
	$$
	m(f,z): \Z^k \to \{0, \dots ,d\} \ \ \ (n_1, \dots,n_k) \mapsto f(z + n_1 \a_1 + \cdots + n_k \a_k).
	$$
	We call such a sequence a \emph{multidimensional 
		$(k,d)$-Sturmian like sequence}.  
\end{defin}

\begin{lem} \label{l_1} \
\ben
\item
Let $q: X_1 \to X_2$ be a map between sets and
$\{f_n: X_2 \to \R\}_{n \in \N}$ a bounded sequence of functions \emph{(with no continuity assumptions on $q$ and $f_n$)}.
If $\{f_n \circ q\}$ is an independent sequence on $X_1$
then $\{f_n\}$ is an independent sequence on $X_2$.
\item
If $q$ is onto then the converse is also true. That is $\{f_n \circ q\}$ is independent if and only if $\{f_n\}$ is independent.
\item
Let $\{f_n\}$ be a bounded sequence of continuous
functions on a topological space $X$.
Let $Y$ be a \emph{dense} subset of $X$. Then $\{f_n\}$ is an independent sequence on $X$ if and only if
the sequence of restrictions
$\{f_n|_Y\}$ is an independent sequence on $Y$.
\een
\end{lem}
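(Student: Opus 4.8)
The plan is to prove each of the three claims about independent sequences by unwinding the definition directly, since the notion of independence (Definition in the excerpt) is purely set-theoretic and involves only preimages of half-lines. Recall that $\{f_n\}$ is independent if there are reals $a<b$ such that for all finite disjoint $P,M \subset \N$ the set $\bigcap_{n\in P} f_n^{-1}(-\infty,a) \cap \bigcap_{n\in M} f_n^{-1}(b,\infty)$ is nonempty. The constants $a<b$ will be the same throughout each equivalence, which is the key observation that makes the argument work.

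For part (1), I would argue by contraposition on the level of witnesses. Suppose $\{f_n \circ q\}$ is independent on $X_1$ with constants $a<b$. Fix finite disjoint $P,M \subset \N$. By hypothesis there exists $x \in X_1$ lying in $\bigcap_{n\in P}(f_n\circ q)^{-1}(-\infty,a) \cap \bigcap_{n\in M}(f_n\circ q)^{-1}(b,\infty)$. I then set $y := q(x) \in X_2$ and observe that $(f_n\circ q)(x) = f_n(y)$, so $y$ witnesses the same intersection condition for $\{f_n\}$ on $X_2$ with the same $a,b$. Since $P,M$ were arbitrary, $\{f_n\}$ is independent. No continuity is needed because everything is a statement about membership in preimages.

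For part (2), the forward direction is (1), so I only need the converse under the assumption that $q$ is onto. Here I start with a witness $y \in X_2$ for $\{f_n\}$ and use surjectivity to pick any $x \in q^{-1}(y)$; then $(f_n\circ q)(x)=f_n(y)$ places $x$ in the required intersection for $\{f_n\circ q\}$, again with the same constants. This establishes independence of $\{f_n\circ q\}$ on $X_1$, giving the equivalence.

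For part (3), the ``only if'' direction is immediate from (1) applied to the inclusion $q: Y \hookrightarrow X$ (restriction is precomposition with inclusion, and $f_n|_Y = f_n \circ q$). The substantive direction is the converse: given independence of $\{f_n|_Y\}$ on the dense subset $Y$, I must produce witnesses in all of $X$ — but here the witnesses found in $Y$ already lie in $X$, so this direction is in fact automatic and does not even use continuity or density. The content of (3) is therefore that density lets us \emph{promote} independence from $X$ down to $Y$: the genuinely nontrivial implication is ``$\{f_n\}$ independent on $X$ $\Rightarrow$ $\{f_n|_Y\}$ independent on $Y$,'' where the witnessing points in $X$ need not lie in $Y$. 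This is the main obstacle, and I expect to resolve it using continuity together with density: the sets $f_n^{-1}(-\infty,a)$ and $f_n^{-1}(b,\infty)$ are open, so a finite intersection of them is open and nonempty, hence meets the dense set $Y$; one must only take care that strict inequalities are preserved, which is handled by slightly shrinking the constants (replace $a,b$ by $a',b'$ with $a<a'<b'<b$ is \emph{not} available directly, so instead I intersect with the open sets defined by the original $a,b$, whose nonempty openness already guarantees a point of $Y$ satisfying the same strict inequalities). Thus density plus openness of the defining sets is exactly what transfers a witness from $X$ into $Y$, completing the equivalence.
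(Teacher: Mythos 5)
Your proof is correct and takes essentially the same approach as the paper: parts (1) and (2) by directly transporting witnesses through $q$, and for (3) using that the finite intersections $\bigcap_{n\in P} f_n^{-1}(-\infty,a)\cap\bigcap_{n\in M} f_n^{-1}(b,\infty)$ are open by continuity and nonempty, hence meet the dense set $Y$, while the easy direction follows from (1) applied to the inclusion $Y \hookrightarrow X$. Your parenthetical detour about shrinking the constants in (3) is unnecessary, as you yourself conclude: the point of $Y$ found inside the open intersection already satisfies the same strict inequalities.
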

\begin{proof}
Claims (1) and (2) are straightforward.

(3)
%2302 Since every $f_n$ is continuous, 
 Since $\{f_n\}$ is an independent sequence 
for every pair of finite disjoint sets
$P, M \subset \N$,
the set
$$
\bigcap_{n \in P} f_n^{-1}(-\infty,a) \cap  \bigcap_{n \in M} f_n^{-1}(b,\infty)
$$
is non-empty. 
This set is open because every $f_n$ is continuous. 
Hence, each of them meets the dense set $Y$.
As
$f_n^{-1}(-\infty,a) \cap Y= f_n|_Y^{-1}(-\infty,a)$ and
$f_n^{-1}(b,\infty) \cap Y= f_n|_Y^{-1}(b,\infty)$,
this implies that $\{f_n|_Y\}$ is an independent sequence on $Y$.

Conversely if $\{f_n|_Y\}$ is an independent sequence on a subset $Y \subset X$ then by (1)
(where $q$ is the embedding $Y \hookrightarrow X$),
$\{f_n\}$ is an independent sequence on $X$.
\end{proof}

Below we will sometimes
deal with (not necessarily continuous) functions $f: X \to \R$ such that the orbit $fS$ of $f$ in $\R^X$ is a tame family (Definition \ref{d:tameF}). 
An example of such  Baire 1 function (which is not tame, being discontinuous), 
is the characteristic function $\chi_D$ of an arc $D=[a,a+s)\subset \T$ defined on the system $(R_{\a}, \T)$, where $R_{\a}$
is an irrational rotation of the circle $\T$. 
See Theorem {t:tame-typeOften}. 
%\ref{t:countable}.

\begin{lem} \label{l:tametype}  %23m In terms of Definition \ref{d:tametype2} we have:
Let $S$ be a semigroup, $X$ a (not necessarily compact) $S$-space and
$f: X \to \R$ a bounded (not necessarily continuous) function.
\ben 
\item
Let $f \in \RUC(X)$; then
$f \in \Tame(X)$ if and only if $fS$ is a tame family. 
Moreover, there exists an $S$-compactification $\nu: X \to Y$ 
where the action $S \times Y \to Y$ is continuous,
$Y$ is a tame system and $f=\tilde{f} \circ \nu$ for some $\tilde{f} \in C(Y)$.
\item
Let $G$ be a topological group and $f \in \RUC(G)$. Then $f \in \Tame(G)$ 
if and only if $fG$ is a tame family. 
\item
Let $L$ be a discrete semigroup and $f: L \to \R$ a bounded function. Then $f \in \Tame(L)$ if and only if $fL$ is a tame family. 
\item
Let $h: L \to S$ be a homomorphism of semigroups,  
$S \times Y \to Y$ be an action (without any continuity assumptions) on a set $Y$ and $f: Y \to \R$ be 
a bounded function such that $fL$ is a tame family. 
Then for every point $y \in Y$ the corresponding coding function $m(f,y): L \to \R$ is tame on the discrete semigroup $(L,\tau_{discr})$.
\een
\end{lem}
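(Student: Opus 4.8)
The plan is to prove the four statements of Lemma \ref{l:tametype} by reducing later items to earlier ones and invoking the machinery already developed, principally Theorem \ref{t:tame-f}, Remark \ref{r:cycl-comes}, and Lemma \ref{l_1}. The central observation is that the notion of a tame \emph{family} (Definition \ref{d:tameF}) is purely set-theoretic: it depends only on the functions in $fS$ as real-valued functions on the underlying set, with no continuity required. By contrast, $f \in \Tame(X)$ is defined via the existence of an $S$-compactification factoring $f$ through a tame system. The bridge between these two notions is exactly the cyclic construction.

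For item (1), I would argue as follows. Since $f \in \RUC(X)$, Remark \ref{r:cycl-comes} provides a cyclic $S$-system $X_f$, a continuous $S$-compactification $\pi_f: X \to X_f$, and a continuous $\tilde{f}: X_f \to \R$ with $f = \tilde{f} \circ \pi_f$, where moreover $\tilde{f}S$ separates points of $X_f$. By definition $f \in \Tame(X)$ means $f$ comes from some tame $S$-compactification; since $\Tame(X)$ is a closed $S$-invariant subalgebra and $X_f$ is the minimal compactification through which $f$ factors, it suffices to test tameness on $X_f$ itself, i.e. $f \in \Tame(X)$ iff $(S, X_f)$ is tame. Now I apply Theorem \ref{t:tame-f} to the \emph{compact} $S$-space $X_f$ and the continuous function $\tilde{f} \in C(X_f)$: the equivalence of (1) and (2) there gives that $\tilde{f} \in \Tame(X_f)$ iff $\tilde{f}S$ is a tame family on $X_f$. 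The remaining point is to transfer tameness of the family between $X$ and $X_f$. The map $\pi_f$ is onto (dense range into a compact space, and $\tilde{f}S$ separates points, so in fact the relevant evaluation is onto), and $fs = (\tilde{f}\circ \pi_f)s = (\tilde{f}s)\circ \pi_f$ for each $s \in S$; hence $fS = (\tilde{f}S)\circ \pi_f$. By Lemma \ref{l_1}.2 (with $q = \pi_f$ surjective), $\tilde{f}S$ contains an independent sequence iff $fS$ does, so the two families are tame together. Combining, $f \in \Tame(X)$ iff $fS$ is a tame family, and the ``Moreover'' clause is just the assertion that $\nu := \pi_f$, $Y := X_f$ witness this.

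Items (2) and (3) are immediate specializations of (1). For (2) take $X = G$ with the left action of $G$ on itself: every $f \in \RUC(G)$ then satisfies the hypothesis of (1), giving $f \in \Tame(G)$ iff $fG$ is a tame family. For (3), when $L$ is a discrete semigroup, \emph{every} bounded function $f: L \to \R$ lies in $\RUC(L)$ because the right action on $C(L) = \ell^\infty(L)$ is automatically continuous at the relevant points in the discrete setting; so again (1) applies directly and yields $f \in \Tame(L)$ iff $fL$ is a tame family.

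The substantive new content is item (4), and this is where I expect the main obstacle, though it is more bookkeeping than depth. Given the homomorphism $h: L \to S$, the (possibly discontinuous, set-theoretic) action $S \times Y \to Y$, and the hypothesis that $fL$ is a tame family on $Y$ — here one must read $fL$ as $\{f(\ell \cdot (-)) : \ell \in L\}$, with $\ell$ acting via $h(\ell)$ — I would first fix $y \in Y$ and consider the coding function $m(f,y): L \to \R$, $\ell \mapsto f(h(\ell) y)$. By item (3) applied to the discrete semigroup $(L, \tau_{\mathrm{discr}})$, the function $m(f,y)$ is tame iff the family $m(f,y)L = \{\,\ell' \mapsto f(h(\ell'\ell)y) : \ell \in L\,\}$ is a tame family on $L$. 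The key step is then to realize this family as a pullback of $fL$: define $q: L \to Y$ by $q(\ell') = h(\ell')y$, so that the function $\ell' \mapsto f(h(\ell'\ell)y)$ equals $(f\circ \widetilde{h(\ell)}) \circ q$ where $f\circ\widetilde{h(\ell)} \in fL$. Thus $m(f,y)L \subseteq (fL)\circ q$ as families on $L$. Since $fL$ is tame by hypothesis, Lemma \ref{l_1}.1 (applied with this $q$, which need not be onto, giving the direction that an independent sequence upstairs forces one downstairs) shows that any independent sequence in $m(f,y)L$ would pull back from an independent sequence in $fL$, contradicting tameness; hence $m(f,y)L$ is tame and $m(f,y) \in \Tame(L,\tau_{\mathrm{discr}})$. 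The one delicate point to verify carefully is the compatibility of the semigroup multiplication in $L$ with the action through $h$, i.e. that $h(\ell'\ell)y = h(\ell')(h(\ell)y)$, which holds precisely because $h$ is a homomorphism and $Y$ is an $S$-space; this is what lets the two reindexings (left translation on $L$ versus composition with $q$) match up so that Lemma \ref{l_1} applies cleanly.
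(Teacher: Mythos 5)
Your overall route coincides with the paper's: cyclic compactification plus Theorem \ref{t:tame-f} for item (1), specialization for (2)--(3), and for (4) the pullback along $q(\ell')=h(\ell')y$ together with Lemma \ref{l_1}.1. However, two of your justifications are wrong as written, even though both are repairable inside your own framework.

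First, in item (1) you assert that $\pi_f\colon X \to X_f$ is onto (``dense range into a compact space\dots so in fact the relevant evaluation is onto'') and then invoke Lemma \ref{l_1}.2, which requires surjectivity. This is false: a continuous map with dense range into a compact space need not be surjective when the domain is not compact. For instance, take $X=S=\Z$ discrete and $f=\chi_D$ a Sturmian characteristic function; then $\pi_f(X)=\delta_f(\Z)$ is a countable orbit inside the uncountable Sturmian subshift $X_f$. So Lemma \ref{l_1}.2 does not apply to $\pi_f$. The correct tool, and the one the paper uses, is Lemma \ref{l_1}.3: the functions $\tilde{f}s$ are \emph{continuous} on $X_f$ and $\pi_f(X)$ is \emph{dense} there, so $\tilde{f}S$ contains an independent sequence iff its restriction to $\pi_f(X)$ does; and the restricted family corresponds to $fS$ via the surjection $X \to \pi_f(X)$ (here Lemma \ref{l_1}.2 does apply). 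The converse implication, from $f\in\Tame(X)$ to tameness of $fS$, goes through Lemma \ref{l_1}.1, as you indicate.

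Second, in item (4) your indexing is inconsistent and the argument as written only works for commutative $L$. With the paper's convention $(fs)(x)=f(sx)$, the orbit of $f_0=m(f,y)$ under the left action of $L$ on itself is $\{\ell' \mapsto f_0(\ell\ell')\}_{\ell\in L}$, whereas you wrote $\{\ell' \mapsto f_0(\ell'\ell)\}_{\ell\in L}$. Your pullback computation gives $\bigl((f\circ\widetilde{h(\ell)})\circ q\bigr)(\ell') = f(h(\ell)h(\ell')y) = f_0(\ell\ell')$, which matches the correct (left) family on the nose --- this is exactly the paper's identity $f_0\ell=(f\ell)\circ q$ --- but does \emph{not} equal the family you wrote down, $f(h(\ell')h(\ell)y)$, unless $L$ is abelian; the closing remark that $h(\ell'\ell)y=h(\ell')(h(\ell)y)$ does not reconcile the two, since the discrepancy is between $h(\ell\ell')$ and $h(\ell'\ell)$. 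Once the orbit is written with the correct convention, your pullback identity holds exactly and Lemma \ref{l_1}.1 yields the contradiction with tameness of $fL$, completing the proof just as in the paper.
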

\begin{proof} 
For (1) consider the cyclic $S$-compactification $\nu: X \to Y=X_f$
(see Definition \ref{d:cyclic}). Since $f \in \RUC(X)$ the action $S \times X_f \to X_f$ is jointly continuous (Remark \ref{r:cycl-comes}).
By the basic property of the cyclic compactification there exists a continuous function $\tilde{f}: X_f \to \R$ 
such that $f=\tilde{f} \circ \nu$. The family $fS$ has no independent sequence.
By Lemma \ref{l_1}.3 we conclude that also $\tilde{f} S$ has no independent sequence. 
This means, by Theorem \ref{t:tame-f}, that $\tilde{f}$ is tame. Hence (by Definition) so is $f$.
The converse follows from Lemma \ref{l_1}.1.

(2) and (3) follow easily from (1) (with $X=G=L$) taking into account that on a
discrete semigroup $L$ every bounded function $L \to \R$ is in $\RUC(L)$.

(4)
 By (3) it is enough to show for 
the coding function $f_0:=m(f,y)$ that the family $f_0L$ has no independent subsequence.
% We are going to use Lemma \ref{l_1}.1.
Define $q: L \to Y, s \mapsto h(s)y$.
Then $f_0t=(ft) \circ q$ for every $t \in L$. If $f_0t_n$ is an independent sequence for some sequence $t_n \in L$ then Lemma \ref{l_1}.1 
implies that the sequence of functions $ft_n$ on $Y$ is independent.
This contradicts the assumption that $fL$ has no independent subsequence.
\end{proof}

\sk

Let $f: X \to Y$ be a function between topological spaces. We denote by $cont(f)$ and $disc(f)$ the points of continuity and discontinuity for $f$
% ttt
respectively.

%3003 Eli, in most formulations we need just "almost continuity". We can replace it by
%the term "weak almost continuity" but then the formulations will be a little bit longer.
\begin{defin} \label{d:EvCont}
	Let $F$ be a family of functions on $X$. We say that $F$ is:
	\ben
	\item \emph{Strongly almost continuous}
	%on $X$
	if for every $x \in X$
	we have $x \in cont(f)$  for
	% ttt
	almost all $f \in F$ (i.e. with the exception of at most a finite set of elements
	which may depend on $x$).
	\item
	\emph{Almost continuous}
	%on $X$
	if for every infinite (countable) subset $F_1 \subset F$ there exists an
	infinite subset $F_2 \subset F_1$ such that $F_2$ is strongly almost continuous on $X$.
	\een
\end{defin}

\begin{ex} \label{ex:event} \
	\ben
	\item Let $G \times X \to X$ be a group action, $G_0 \leq G$
	% ttt
	% its
	a
	subgroup and $f: X \to \R$
	%1604
	a function such that
	$$
	G_0x \cap disc(f) \ \text{and} \ St(x) \cap G_0 \ \  \text{are \ finite}  \ \forall x \in X,
	$$
	where $St(x) \leq G$ is the stabilizer subgroup of $x$.
	%\footnote{for example, $f=\chi_D$ is a characteristic function of a subset $D \subset X$ s.t.
		%$$
		%Gz \cap \partial (D) \ \ \ \text{is \ finite} \ \forall z \in X
		%$$
		%}
	%%(e.g., if $\partial (D)$ is finite).}
Then the family $fG_0$ is strongly almost continuous
%0202
(indeed, use the following equality $g^{-1}cont(f)=cont(fg), \ g \in G$).
%In general, for semigroup actions we have $s^{-1}cont(f)\subset cont(fs)$.
\item
A coarse sufficient condition for (1) is:
$disc(f)$ is finite and $St(x) \cap G_0$ is finite \ $\forall x \in X$. %stabilizer subgroup $St(x)$.
%0403
% where $x \in X$.
\item As a particular case of (2)
% (or, of (1))
we have the following example.
%270714  Replacing twice "K" by "G"
For every compact group $G$ and a function $f: G \to \R$ with finitely many discontinuities,
$fG_0$ is strongly almost continuous on $X=G$ for every subgroup $G_0$ of $G$.
\een
\end{ex}

\begin{thm} \label{2811}
	Let $X$ be a compact metric space and
	$F$ a bounded family of real valued functions on $X$ such that $F$ is almost continuous.
	Further assume that:
	\bit
	\item [(*)]
	for every sequence $\{f_n\}_{n \in \N}$ in $F$ there exists a subsequence $\{f_{n_m}\}_{m \in \N}$ and a countable
	subset $C \subset X$ such that
	$\{f_{n_m}\}_{m \in \N}$ pointwise converges on $X \setminus C$
	to a function $\phi: X \setminus C \to \R$ where $\phi \in \B_1(X \setminus C)$.
	\eit
	Then $F$ is a tame family. 
	%1902 there is no independent sequence in $F$.
\end{thm}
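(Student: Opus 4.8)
The plan is to argue by contraposition: assuming $F$ is not tame, I will manufacture from an independent sequence in $F$ a single point of $X$ at which an otherwise pointwise convergent sequence fails to converge. So suppose $\{f_n\}_{n\in\N}\subset F$ is an independent sequence, with witnessing constants $a<b$.

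First I would clean up the sequence. Since an infinite subsequence of an independent sequence is again independent with the same constants $a<b$, and since strong almost continuity passes to subfamilies, I may invoke the almost continuity of $F$ to replace $\{f_n\}$ by an infinite subsequence which is strongly almost continuous; relabel it $\{f_n\}$. Applying hypothesis $(*)$ to this sequence yields a subsequence $\{g_m\}$ and a countable set $C\subset X$ with $g_m\to\phi$ pointwise on $X\setminus C$. Finally, because $C$ is countable and the family is bounded, a diagonal extraction produces a further subsequence, still denoted $\{g_m\}$, that converges pointwise at every point of $C$ as well, and hence converges pointwise on all of $X$ to some $\psi$ with $\psi|_{X\setminus C}=\phi$. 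Throughout, $\{g_m\}$ remains independent (with the same $a<b$) and strongly almost continuous.

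Now I would exploit independence to build witnesses. For each $k$ set $P_k=\{1,3,\dots,2k-1\}$ and $M_k=\{2,4,\dots,2k\}$; independence furnishes $x_k\in X$ with $g_n(x_k)<a$ for the odd $n\in P_k$ and $g_n(x_k)>b$ for the even $n\in M_k$. By compactness of the metric space $X$ I pass to a convergent subsequence and assume $x_k\to x^*$. The decisive step is to transfer these inequalities to $x^*$: fixing an index $n$, once $k$ is large we have $g_n(x_k)<a$ if $n$ is odd and $g_n(x_k)>b$ if $n$ is even, so if $g_n$ is continuous at $x^*$ then $g_n(x^*)\le a$, respectively $g_n(x^*)\ge b$. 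This is exactly where strong almost continuity enters: it guarantees that $g_n$ is continuous at $x^*$ for all but finitely many $n$, whence there is $N$ with $g_n(x^*)\le a$ for all odd $n\ge N$ and $g_n(x^*)\ge b$ for all even $n\ge N$. Therefore $\liminf_m g_m(x^*)\le a<b\le\limsup_m g_m(x^*)$, so $\lim_m g_m(x^*)$ does not exist, contradicting pointwise convergence of $\{g_m\}$ at $x^*\in X$. Hence $F$ contains no independent sequence and is tame.

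The heart of the matter, and the main obstacle, is precisely this transfer of the strict inequalities from the witnesses $x_k$ to their cluster point $x^*$: for genuinely discontinuous functions one cannot pass $g_n(x_k)<a$ to the limit, and in the absence of any continuity hypothesis pointwise convergence is entirely compatible with independence (the witnesses simply escape to the ``boundary''). Strong almost continuity repairs this because the finitely many indices at which $g_n$ is discontinuous at $x^*$ do not affect the $\liminf$/$\limsup$. I note that the Baire class $1$ assumption on $\phi$ is not needed along this route once convergence has been extended to all of $X$ by the diagonal extraction over the countable set $C$; alternatively, one could bypass that extraction and locate a suitable cluster point directly inside $X\setminus C$ by a Baire category argument, using that a Baire class $1$ function on the Polish space $X\setminus C$ has a comeager set of continuity points.
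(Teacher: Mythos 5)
Your proof is correct, and it takes a genuinely different---and more elementary---route than the paper's. The paper argues through the limit function $\phi$ itself: from the independent sequence it builds a dyadic tree of nonempty sets out of the sets $A_n=f_n^{-1}(-\infty,a)$ and $B_n=f_n^{-1}(b,\infty)$, chooses a point $x_u\in\bigcap_i \cls(\Omega_{n_i})$ on each of uncountably many essentially distinct branches, uses strong almost continuity to show that distinct such points satisfy $|\phi(x_u)-\phi(x_v)|\ge b-a$ whenever they avoid $C$, and then, after a Cantor--Bendixson condensation-point argument, concludes that $\phi$ is not fragmented on the Polish space $X\setminus C$, contradicting $\phi\in\B_1(X\setminus C)$ via Lemma \ref{r:fr1}.2; the Baire-$1$ hypothesis is thus the engine of the paper's contradiction. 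You avoid $\phi$ altogether: after securing a strongly almost continuous independent subsequence, you use boundedness of $F$ to run a diagonal extraction over the countable set $C$ and obtain pointwise convergence on all of $X$ (all the relevant properties---independence with the same $a<b$, strong almost continuity, convergence on $X\setminus C$---do pass to subsequences, as you note), and you then contradict convergence at a single cluster point $x^*$ of the independence witnesses, where strong almost continuity lets you pass the inequalities $<a$ and $>b$ to $x^*$ for all but finitely many indices, giving $\liminf_n g_n(x^*)\le a<b\le\limsup_n g_n(x^*)$. Each step checks out. What the two routes buy: yours dispenses with the tree, the condensation points and the fragmentability machinery, and in fact proves a formally stronger theorem, since the assumption $\phi\in\B_1(X\setminus C)$ in $(*)$ is never used---pointwise convergence of a subsequence off a countable set suffices; the paper's tree device, on the other hand, is the same technique used in the proof of Theorem \ref{t:countDetermined}, and its formulation of $(*)$ matches the way the theorem is invoked later (e.g.\ in Theorem \ref{t:GenHuang}, where the limit function is exhibited precisely as a Baire $1$ function on a co-countable set). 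One caveat on your closing aside: bypassing the diagonal extraction by a Baire-category argument inside $X\setminus C$ is not obviously workable, since the independence witnesses may well cluster only at points of $C$ and the sets witnessing independence need not be open; but your main argument does not rely on that remark.
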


%\begin{defin} \label{d:EvCont}
%Let $F$ be a family of functions on $X$ and $Y \subset X$. We say that $F$ is:
%\ben
%\item \emph{Strongly almost continuous} on $Y$ 
%if for every $x \in Y$
%we have $x \in cont(f)$  for 
%almost all $f \in F$ (i.e. with the exception of at most a finite set of elements
%which may depend on $x$).
%\item
% \emph{Almost continuous}
% %on $X$
%if for every infinite (countable) subset $F_1 \subset F$ there exists an
%infinite subset $F_2 \subset F_1$ and a countable subset $C \subset X$ such that $F_2$ is strongly almost continuous on the complement $X \setminus C$.
%\een
%\end{defin}
%
%For example, if $\bigcup \{disc(f): f \in F\}$ is countable then $F$ is almost continuous. 

%\sk  
%
%                   
%                
%\begin{thm} \label{2811}
%Let $X$ be a compact metric space and
%$F$ a bounded family of real valued functions on $X$ such that $F$ is almost continuous.
%Further assume that:
%\bit
%\item [($*$)]   
%for every sequence $\{f_n\}_{n \in \N}$ in $F$ there exists a subsequence $\{f_{n_m}\}_{m \in \N}$ and a countable
%   subset $C \subset X$ such that
%   $\{f_{n_m}\}_{m \in \N}$ pointwise converges on $X \setminus C$
%   to a function $\phi: X \setminus C \to \R$ where $\phi \in \B_1(X \setminus C)$.                                                      
%\eit
%Then $F$ is a tame family. 
%\end{thm} 
\begin{proof} 	
	Assuming the contrary let $\{f_n\}$ be an independent sequence in $F$. Then,
	by assumption,
	there exists a countable subset $C \subset X$ and a subsequence $\{f_{n_m}\}$ such that
	$\{f_{n_m}: X \setminus C \to \R\}$ pointwise converges on $X \setminus C$
	to a function $\phi: X \setminus C \to \R$ such that $\phi \in \B_1(X \setminus C)$.
	
	Independence is preserved by subsequences so
	this subsequence $\{f_{n_m}\}$ remains independent. For simplicity of notation assume that $\{f_n\}$ itself has the properties of $\{f_{n_m}\}$.
	Moreover we can suppose in addition,
	by Definition \ref{d:EvCont}, that $\{f_n\}$ is strongly almost continuous.

By the definition of independence, there exist $a < b$ such that for every pair of disjoint finite sets
$P,M \subset \N$ we have 
$$
\bigcap_{n \in P} A_n \cap  \bigcap_{n \in M} B_n \neq \emptyset,
$$
where $A_n:=f_n^{-1}(-\infty,a)$ and $B_n:=f_n^{-1}(b,\infty)$.
Now define
a tree of nested sets
as follows:

\hskip 6.5cm $\Omega_1:=X$

\hskip 2cm $\Omega_2:=\Omega_1 \cap A_1=A_1$ \hskip 3cm  $\Omega_3:=\Omega_1 \cap B_1=B_1$

$\Omega_4:=\Omega_2 \cap A_2  \hskip 1.2cm \Omega_5:=\Omega_2 \cap B_2 \hskip 1.2cm \Omega_6:=\Omega_3\cap A_2 \hskip 1cm \Omega_7:=\Omega_3 \cap B_2$,

and so on.
In general, $$\Om_{2^{n+1} + 2k}:=\Om_{2^n+k} \cap A_{n+1},  \hskip 0.3cm  \Om_{2^{n+1} + 2k+1}:=\Om_{2^n+k} \cap B_{n+1}$$
for every $0 \leq k < 2^n$ and every $n \in \N$.

We obtain a system $\{\Om_n\}_{n \in \N}$ which satisfies:

%(``tree conditions")
$$
\Om_{2n} \cup \Om_{2n+1} \subset \Om_n
\ {\text{ and}}\   \Om_{2n} \cap \Om_{2n+1} =\emptyset
\  {\text{for each}}\  n \in \N.
$$

\sk

Since $\{(A_n,B_n)\}_{n \in \N}$ is independent 
%280817 adding:  (in the sense of \cite{Ro}), 
(in the sense of \cite{Ro}), 
every $\Om_n$ is nonempty.

For every binary sequence $u=(u_1,u_2, \dots) \in \{0,1\}^{\N}$ we have the corresponding uniquely defined
\emph{branch}
$$
\a_u:=\Om_1 \supset \Om_{n_1} \supset \Om_{n_2} \supset \cdots
$$
where for each $i \in \N$ with $2^{i-1} \leq n_i < 2^{i}$ we have
$$n_{i+1}=2n_i \ \text{iff} \ u_i=0 \ \text{and} \ n_{i+1}=2n_i+1 \ \text{iff} \ u_i=1.$$

Let us say that $u, v \in \{0,1\}^{\N}$ are \emph{essentially distinct} if they have infinitely many different coordinates.
Equivalently, if $u$ and $v$ are in different cosets of the Cantor group $\{0,1\}^{\N}$  with respect to the subgroup $H$ consisting of
the binary sequences with finite support. Since $H$ is countable there are uncountably many pairwise essentially distinct
elements in the Cantor group. We choose a subset $T \subset \{0,1\}^{\N}$ which intersects each coset in exactly one point.  Clearly, 
$card (T)=2^{\omega}$. Now 
for every branch $\a_u$ where $u \in T$
choose one element
$$x_{u} \in \bigcap_{i \in \N} cl(\Om_{n_i}).$$
Here we use the compactness of $X$ which guarantees that $\bigcap_{i \in \N} cl(\Om_{n_i}) \neq \emptyset$.
We obtain a set $X_T:=\{x_{u}: \ u \in T\} \subset X$ and an onto function $T \to X_T, \ u \mapsto x_u$. 
%Define also $T_0:=\{u \in T: x_u \in X_T \cap C\}$ which is at most countable (possibly empty). 

\sk

\textbf{Claim:}
\ben
\item  The function $T \to X_T, \ u \mapsto x_u$ is injective. In particular,
$X_T \setminus C$ is uncountable.
\item
$|\phi(x_u) - \phi(x_v)| \geq \eps:=b-a$ for every
distinct $x_u, x_v \in X_T \setminus C$.
\een

\sk
\textbf{Proof of the Claim:} (1)
Let $u=(u_i)$ and $v=(v_i)$ are distinct elements in $T$. Denote by
$\a_u:=\{\Omega_{n_i}\}_{i \in \N}$ and $\a_v:=\{\Omega_{m_i}\}_{i \in \N}$ the corresponding branches.
Then, by the definition of $X_T$, we have the uniquely defined points
$x_u \in \cap_{i \in \N} cl(\Om_{n_i})$ and
$x_v \in \cap_{i \in \N} cl(\Om_{m_i})$ in $X_T$.

Since $u,v \in T$ are essentially distinct they have infinitely many different indices.

As $\{f_n\}$ is strongly almost continuous
there exists a sufficiently large $t_0 \in \N$ such that
the points $x_u$ and $x_v$ are both points of continuity of
$f_{n}$ for every $n \geq t_0$.

Now note that if
$u_{i} \neq v_{i}$ then
the sets $\Omega_{n_{i}+1}$ and $\Omega_{m_{i}+1}$ are contained
(respectively) in the
pair of disjoint sets
$A_k:=f_k^{-1}(-\infty,a)$ and $B_k:=f_k^{-1}(b,\infty)$.
Since $u$ and $v$ are essentially distinct we can assume that $i$ is sufficiently large in order to ensure that $k \geq t_0$.
That is, we necessarily have exactly one of the cases:
$$(a) \ \  \ \Omega_{n_{i}+1} \subset A_k, \quad \Omega_{m_{i}+1} \subset B_k$$ or
$$(b) \ \ \  \Omega_{n_{i}+1} \subset B_k, \quad \Omega_{m_{i}+1} \subset A_k.$$

For simplicity we only check the first case (a). For (a) we have
$x_u \in \cls(\Omega_{n_{i}+1}) \subset \cls(f_{k}^{-1}(-\infty,a))$ and $x_v \in \cls(\Omega_{n_{i}+1}) \subset \cls(f_{k}^{-1}(b,\infty))$.
Since $\{x_u,x_v\} \subset cont(f_{n})$ are continuity points for every $n \geq t_0$ and since $k \geq t_0$ by our choice,
we obtain $f_{k}(x_u) \leq a$ and $f_{k}(x_v) \geq b$. So, we can conclude
that $|f_{k}(x_u) -f_{k}(x_v)| \geq \eps:=b-a$ for every $k \geq t_0$. In particular, $x_u$ and $x_v$ are distinct.
This proves (1).

%2503
(2) 
Furthermore, if our distinct $x_u, x_v \in X_T$ are in addition from $X_T \setminus C$ then by (*) we have 
$\lim f_k(x_u)=\phi(x_u)$ and $\lim f_k(x_v)=\phi(x_v)$. It follows that
$|\phi(x_u) - \phi(x_v)| \geq \eps$ and
the condition (2) of our claim is also proved.

\sk
Since $X_T \setminus C$ is an uncountable subset of a Polish space $X$ there exists an uncountable subset $Y \subset X_T \setminus C$ such that any point of $y$ is a condensation point in $X_T \setminus C$.  This follows from the proof of Cantor-Bendixson theorem, 
%2302 
\cite{Kech}). More precisely, define 
$$Q:=\{x \in X_T \setminus C: 
\ \text{there exists a countable open nbd} \ O_x \ \text{of} \ x \ \text{in the space} \ X_T \setminus C \}.$$
Observe that $Q= \bigcup \{ O_x: \ x \in Q \}$. Since $Q$ is second countable, by Lindelof property there exists a countable subcover. Hence, $Q$ is at most a countable subset of $X_T \setminus C$ and any point $y \in Y:= (X_T \setminus C) \setminus Q$ is a condensation point. 
%!!!

Now, it follows by assertion (2) of the Claim that  
for every open subset $U$ in $X$ with
$U \cap Y \neq \emptyset$ we have $\diam(\phi(U \cap Y)) \geq \eps$. This
means that $\phi: X \setminus C \to \R$ is not fragmented. Since $C$ is countable and $X$ is compact metrizable the subset $X \setminus C$ is
Polish. On Polish spaces fragmentability and Baire 1 property are the same for real valued functions (Lemma \ref{r:fr1}.2).
So, we obtain that $\phi: X \setminus C \to \R$ is not Baire 1. This contradicts
% ttt
% to
the assumption that $\phi \in \B_1(X \setminus C)$.

%Since $X_T \setminus C$ is an uncountable subset of a Polish space $X$ there exists an uncountable subset $Y \subset X_T \setminus C$ such that any point of $y$ is a condensation point in $X_T \setminus C$ (this follows from the proof of Cantor-Bendixson theorem, 
%\cite{Kech}). 
%%!!! 
%For the sake of completeness we include here a short proof: d

\end{proof}

%280817 I removed this Remark (as I think now it is too technical and not so interesting)  
%\begin{remarks} \label{2811+}
%	In terms of Theorem \ref{2811} we mention: %useful sufficient conditions for (*). 
%	\ben 
%	\item [(A)] 
%	if each $f_{n_m}$ in ($*$) is continuous on $X \setminus C$ then $\phi \in \B_1(X \setminus C)$ (automatically). In addition, if $F$ is eventually fragmented on $X$ then ($*$) is satisfied. 
%	Therefore, Theorem \ref{2811}, for continuous functions, implies the implication (4) $\Rightarrow$ (2) in Theorem \ref{f:sub-fr} for compact metrizable $X$. Nonmetrizable case can be easily reduced to the metrizable case (see \cite{GM-rose}).    
%\item [(B)] instead of ($*$) we may assume  %at least, formally weaker assumption 
%
%($**$) \ \ \ \ \   $\{f_{n_m}\}_{m \in \N}$ is eventually fragmented on $X \setminus C$. 
%
%\item  [(C)] instead of ($*$) we may assume  %at least, formally weaker assumption
%
%($***$) \ \ \ \ \  $\cls_p(\{f_{n_m}\}_{m \in \N}) \subset \B_1(X \setminus C)$.  
%\een 
%\end{remarks}

\begin{thm} \label{c:2811}
	Let $X$ be a compact metric space,
	and $F$ a bounded family of real valued functions on $X$ such that $F$ is almost continuous.
	%\footnote{e.g., $F=fG$ and $Gz \cap disc(f)$ is finite for every $z \in X$}.
	Assume that $\cls_p(F) \subset \B_1(X)$.
	Then $F$ is a tame family. 
	%1902 there is no independent sequence in $F$. 
\end{thm}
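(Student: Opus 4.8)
The plan is to deduce the statement from Theorem \ref{2811} by verifying its extra hypothesis $(*)$ under the present, cleaner assumption. Both results already assume that $F$ is almost continuous, so that part of the hypothesis is simply carried over; the only thing to check is that the condition $\cls_p(F)\subset \B_1(X)$ forces condition $(*)$, after which Theorem \ref{2811} closes the argument verbatim.

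So I would fix an arbitrary sequence $\{f_n\}_{n\in\N}$ in $F$ and first record that each $f_n$ lies in $\B_1(X)$: since the pointwise closure contains the set itself, $F\subset \cls_p(F)\subset \B_1(X)$. Thus $\{f_n\}$ is a uniformly bounded sequence of Baire class $1$ functions on the \emph{Polish} space $X$ (a compact metric space is Polish), and moreover $\cls_p(\{f_n\})\subset \cls_p(F)\subset \B_1(X)$.

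The key step is to extract a subsequence converging pointwise on all of $X$ to a Baire class $1$ limit. This is precisely the content of the Bourgain--Fremlin--Talagrand theorem \cite{BFT,TodBook}: for a Polish space $X$ the space $\B_1(X)$ is angelic in the topology of pointwise convergence. Since $\{f_n\}$ is uniformly bounded, its pointwise closure $\cls_p(\{f_n\})$ is compact in $\R^X$, and by hypothesis it sits inside $\B_1(X)$; hence $\{f_n\}$ is relatively compact in $\B_1(X)$, and angelicity upgrades this to relative \emph{sequential} compactness. In the more familiar dichotomy form this says that $\{f_n\}$ either has a pointwise convergent subsequence with Baire class $1$ limit, or an independent (equivalently $l_1$-) subsequence whose pointwise closure contains a copy of $\beta\N$, and hence non--Baire-$1$ functions; the second alternative is excluded by $\cls_p(F)\subset \B_1(X)$, so the first holds. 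Passing to such a subsequence $\{f_{n_m}\}$ gives $\phi:=\lim_m f_{n_m}\in \cls_p(F)\subset \B_1(X)$, with convergence at every point of $X$.

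This verifies condition $(*)$ of Theorem \ref{2811} with the empty (countable) set $C=\emptyset$, so that $\B_1(X\setminus C)=\B_1(X)$. Since $F$ is also almost continuous, Theorem \ref{2811} applies and yields that $F$ is a tame family. I expect the only real obstacle to be the justification of this passage to a pointwise convergent subsequence: one must invoke the Bourgain--Fremlin--Talagrand/Rosenthal machinery rather than the elementary Theorem \ref{f:sub-fr}, which is available only for $F\subset C(X)$, and check that all its hypotheses --- Polishness of $X$, uniform boundedness of $F$, and $\cls_p(F)\subset \B_1(X)$ --- are in force. Once the convergent subsequence is in hand, everything else is bookkeeping inherited from Theorem \ref{2811}.
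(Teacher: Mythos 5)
Your proof is correct and follows essentially the same route as the paper: both arguments use the Bourgain--Fremlin--Talagrand theorem to extract, from any sequence in $F$, a pointwise convergent subsequence with Baire class $1$ limit, and then invoke Theorem \ref{2811}. The only difference is organizational — you verify hypothesis $(*)$ directly (with $C=\emptyset$), while the paper runs the same extraction inside a proof by contradiction starting from a hypothetical independent sequence.
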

\begin{proof} Assuming the contrary let $F$ has an independent sequence $F_1:=\{f_n\}$.
	Since $\cls_p(F) \subset \B_1(X)$ we have $\cls_p(F_1) \subset \B_1(X)$.
	By the BFT theorem \cite[Theorem 3F]{BFT} the compactum $\cls_p(F_1)$ is a Fr\'echet topological space.
	Every (countably) compact Fr\'echet
	% ttt
	% (e.g., sequential)
	space is sequentially compact, \cite[Theorem 3.10.31]{Eng},
	hence
	% the compact Fr\'echet space
	$\cls_p(F_1)$ is sequentially compact.
	Therefore the sequence $\{f_n\}$
	% in $F$
	contains a pointwise
	%(on $X$)
	convergent subsequence, say $f_n \to \phi \in \B_1(X)$.
	Now apply Theorem \ref{2811}
	to get a contradiction, taking into account that
	% passing to subsequences, the
	the properties almost continuity and independence are both inherited
	by subsequences.
\end{proof}

%Note that Theorem \ref{c:2811} (for continuous functions) implies the implication (4) $\Rightarrow$ (1) in Theorem \ref{f:sub-fr}. 

\begin{thm} \label{t:tame-typeOften} \
	\ben
	\item Let $X$ be a compact metric $S$-space, $S_0$
	% ttt
	%is
	a subsemigroup of $S$.  Let $f: X \to \R$
	% is
	be a bounded function such that $\cls_p(fS_0) \subset \B_1(X)$
	and  with $fS_0$ almost continuous. 
	%1902 Then $fS_0$ has no independent subsequence.
	Then $fS_0$ is 
	%2002
	a tame family. 
	
	%\footnote{e.g., $F=fG$ and $Gz \cap disc(f)$ is finite for every $z \in X$}.
	\item
	Let $X$ be a compact metric $G$-space
	%almost periodic
	and $G_0 \leq G$
	%  is
	%2603
	a subgroup of $G$ such that (i) $(G_0,X)$ is tame, (ii)
	for every $p \in E(G,X)$ and every $x \in X$ the preimage $p^{-1}(x)$ is countable
	%(e.g., distal)
	and (iii) $G_0 \cap St(x)$ is finite for every $x \in X$.
	%!!!!!!!!$G$-system with the almost free action (that is, the stabilizer of almost any element is finite).
	Suppose further that $f: X \to \R$ is a bounded function with only finitely many points of discontinuity. 
	%1902 Then $fG_0$ has no independent subsequence.
	Then $fG_0$ is
	%2002
	a  tame family. 
	\item
	In particular, (2) holds in the following useful situation: $X=G$ is a compact metric group,
	$h: G_0 \to X$ is a homomorphism of groups and $f: X \to \R$ has finitely many discontinuities. 
	%1902 Then $fG_0$ has no independent subsequence.
	Then $fG_0$ is 
	%2002
	a tame family.
	
	%\item $n$-dimensional Sturmian sequences $\Z^n \to \R$ are tame !!!
	\item
	% ttt
	In all the cases above (1), (2), (3), a coding function $m(f,z): S_0 \to \R$ is a tame function
	on the discrete semigroup $S_0$ for every $z \in X$ and every homomorphism
	$h: S_0 \to S$ (or, $G_0 \to G$). Also the corresponding subshift $(S_0, X_f)$ is tame.
	
	\een
\end{thm}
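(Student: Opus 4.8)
The plan is to reduce each part to a result already in the excerpt: part (1) directly to Theorem \ref{c:2811}, part (2) to Theorem \ref{2811}, part (3) to (2) by passing to the image, and part (4) to Lemma \ref{l:tametype}.4. For (1) there is essentially nothing to do: the family $F:=fS_0$ is bounded, almost continuous, and satisfies $\cls_p(F)\subset\B_1(X)$ by hypothesis, so Theorem \ref{c:2811} applies verbatim and yields that $fS_0$ is tame. All the real content is in (2).

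For (2) I would verify the two hypotheses of Theorem \ref{2811} for $F:=fG_0$. \emph{Almost continuity} is immediate: $disc(f)$ is finite and $G_0\cap St(x)$ is finite by (iii), so Example \ref{ex:event}.2 shows $fG_0$ is (strongly) almost continuous. For \emph{condition (*)}, take an arbitrary sequence $\{fg_n\}$ in $fG_0$ with $g_n\in G_0$. Since $(G_0,X)$ is tame and metric, $E(G_0,X)$ is a Fr\'echet compactum by Theorem \ref{D-BFT}, hence sequentially compact, and each of its elements is a Baire class $1$ self-map of $X$ (the metric characterization of tameness recorded after Theorem \ref{D-BFT}, based on \cite{GMU}). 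Passing to a subsequence I may assume $\tilde g_n\to p$ pointwise for some $p\in E(G_0,X)\subset E(G,X)$ with $p:X\to X$ Baire class $1$. I then set
$$
C:=p^{-1}(disc(f))=\bigcup_{d\in disc(f)}p^{-1}(d).
$$
Hypothesis (ii), applied to $p\in E(G,X)$, makes each fibre $p^{-1}(d)$ countable, so $C$ is countable. For $x\in X\setminus C$ we have $p(x)\in cont(f)$, so $g_nx\to p(x)$ and continuity of $f$ at $p(x)$ give $f(g_nx)\to f(p(x))$; thus $fg_n$ converges on $X\setminus C$ to $\phi:=(f\circ p)|_{X\setminus C}$.

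The delicate step is to check $\phi\in\B_1(X\setminus C)$. As $C$ is countable, $X\setminus C$ is a $G_\delta$ in the compact metric space $X$, hence Polish; so by Lemma \ref{r:fr1}.3 it suffices to prove $\phi$ is fragmented, which (since $X\setminus C$ is hereditarily Baire) amounts by Lemma \ref{r:fr1}.2 to verifying PCP for $\phi$. Given a nonempty relatively closed $A\subseteq X\setminus C$, the restriction $p|_A$ is fragmented (a restriction of the fragmented map $p$) and $A$ is hereditarily Baire, so $p|_A$ has a continuity point $x_1$ by Lemma \ref{r:fr1}.2. Since $x_1\in X\setminus C$ we have $p(x_1)\in cont(f)$, whence $\phi|_A=f\circ(p|_A)$ is continuous at $x_1$. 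This establishes PCP, so $\phi$ is Baire class $1$ on $X\setminus C$; condition (*) holds and Theorem \ref{2811} gives that $fG_0$ is tame.

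For (3) I would note that $fG_0$ coincides with $f\,h(G_0)$, where $h(G_0)$ is a subgroup of the compact metric group $X=G$ acting on itself by translations; there $E(G,X)=G$ consists of translations, so the fibres are singletons (giving (ii)) and $St(x)=\{e\}$ (giving (iii)), while $(h(G_0),X)$ is a subsystem of the equicontinuous — hence tame — system $(G,X)$ (giving (i)), so (2) applies. For (4), the homomorphism $h:S_0\to S$ and the $S$-action make $S_0$ act on $X$, and by (1)--(3) the orbit family $fS_0$ is tame; Lemma \ref{l:tametype}.4 (with $L=S_0$, $Y=X$) then shows $m(f,z)\in\Tame(S_0)$ for every $z$, so it comes from a tame $S_0$-system and the transitive system it generates — the subshift $(S_0,X_f)$ when $f$ is finite-valued — is tame. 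I expect the \emph{main obstacle} to be precisely the Baire class $1$-ness of $\phi=f\circ p$ off the exceptional set in (2): this is where the three hypotheses must cooperate, tameness of $(G_0,X)$ providing a Baire class $1$ limit map $p$, countability of the fibres $p^{-1}(d)$ confining the discontinuity contributions to a countable $C$, and finiteness of $disc(f)$ making $f$ continuous on $p(X\setminus C)$ so that the fragmentability of $p$ is transmitted through $f$ to $\phi$.
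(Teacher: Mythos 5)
Your proposal is correct and follows essentially the same route as the paper: part (1) by direct appeal to Theorem \ref{c:2811}; part (2) by using tameness of $(G_0,X)$ to extract a pointwise-convergent subsequence $g_n \to p \in E(G_0,X)$, setting $C = p^{-1}(\mathrm{disc}(f))$ (countable by hypothesis (ii)), showing the limit $f\circ p$ is Baire class $1$ off $C$, and invoking Theorem \ref{2811} (the paper phrases this as a contradiction, you as a direct verification of condition (*), which is the same argument); and part (4) via Lemma \ref{l:tametype}.4. If anything, your PCP argument for the Baire-1-ness of $\phi = f\circ p_0$ (finding a continuity point of $p$ on each closed $A \subset X\setminus C$ and using that its image avoids $\mathrm{disc}(f)$) is more careful than the paper's own wording, which justifies fragmentedness of the composition by asserting that $f$ is ``uniformly continuous'' --- a slip, since $f$ has discontinuities --- so your treatment cleanly repairs that step.
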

\begin{proof} (1) Apply Theorem \ref{c:2811}.
	
	(2)
	First note that $fG_0$ is strongly almost continuous (Example \ref{ex:event}.2).
	Now assuming the contrary $fG_0$ has an independent subsequence $\{fg_n\}$. Since
	$(G_0, X)$ is a tame metric system we can assume, with no loss in generality,
	that the sequence $\{g_n\}$ converges to an element $p \in E(G_0,X)$.
	Then $f(g_n(x))$ converges to $f(px)$ for every $x \in X \setminus C$, where
	$C: = p^{-1}( disc(f))$ is a countable set.
	Since $X$ is a tame system, $p: X \to X$ is a fragmented function.
	Then also the restricted function $p_0: X \setminus C \to X$ is fragmented.
	%\footnote{The fragmentability of a function is a hereditary property.
		%The same is true for Baire 1 functions (it is not essential that here $X \setminus C$ is Polish)}
	Since $f: X \to \R$ is uniformly continuous we obtain that the composition $f \circ p_0: X \setminus C \to \R$ is fragmented.
	Since $C$ is countable, $X \setminus C$ is Polish. Therefore, by Lemma \ref{r:fr1}.2,
	$f \circ p_0: X \setminus C \to \R$ is Baire 1.
	This however is in contradiction with Theorem \ref{2811}.
	
	(4) Follows from (1) and Lemma \ref{l:tametype}. 
	% and Remark \ref{r:coding}.
\end{proof}

Theorem \ref{t:tame-typeOften}.4 directly implies the following:

\begin{ex}
	For every irrational rotation $\alpha$ of the circle $\T$ and an
	arc \ $D:=[a,b) \subset \T$
	the function $$\varphi_D:=\Z \to \R, \quad n \mapsto \chi_D(n \alpha)$$
	is a tame function on the group $\Z$.
	%\footnote{Note also that this function is not Asplund.}
	In particular, for $D:=[-\frac{1}{4},\frac{1}{4})$ we get that $\varphi_D(n)=\sgn \cos (2 \pi n \a)$ is a tame function on $\Z$.
\end{ex}

%0303
\begin{thm} \label{multi}
	The multidimensional Sturmian $(k,d)$-sequences
	$\Z^k \to \{0,1, \dots, d\}$ (Definition \ref{d:mSturm}) are tame.
\end{thm}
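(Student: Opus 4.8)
The plan is to recognize each multidimensional $(k,d)$-Sturmian sequence as one of the coding functions already proven tame in Theorem \ref{t:tame-typeOften}, so that the statement becomes an immediate corollary rather than a fresh argument. Concretely, take the compact metric group $\T$ acting on itself by rotations, let $f\colon \T\to A=\{0,\dots,d\}\subset\R$ be the coloring attached to the partition $\T=\cup_{i=0}^{d}[c_i,c_{i+1})$ of Definition \ref{d:mSturm}, and let $h\colon \Z^k\to\T$ be the homomorphism $h(n_1,\dots,n_k)=n_1\a_1+\cdots+n_k\a_k$. By definition the sequence in question is exactly the coding function $m(f,z)\colon \Z^k\to\R$, $n\mapsto f(z+h(n))$, for the chosen base point $z\in\T$. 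The first thing to record is that $f$ is bounded and has only finitely many discontinuities: being constant on each half-open arc $[c_i,c_{i+1})$, its discontinuity set is contained in the finite set $\{c_0,c_1,\dots,c_d\}$.

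Next I would check that the translation framework meets the hypotheses of Theorem \ref{t:tame-typeOften}.3 (hence of part (2)). The action of $\T$ on itself by rotations is equicontinuous, so it is WAP and thus tame; its enveloping semigroup is $\T$ itself, every element of which is a rotation, so all point-preimages are singletons and in particular countable; and the stabilizer of any point under translation is trivial. Strong almost continuity of the orbit family is then supplied directly by Example \ref{ex:event}.3, since $f$ has finitely many discontinuities. To stay rigorous in the case where the $\a_i$ are rationally dependent, so that $h$ need not be injective, I would phrase this step through the image subgroup $G_0:=h(\Z^k)\le\T$, which is an honest subgroup of $\T$ and therefore fits the subgroup hypothesis of Example \ref{ex:event}.3 and of Theorem \ref{t:tame-typeOften}.3 verbatim; the set of functions $fG_0=\{f\circ\widetilde a:\ a\in G_0\}$ coincides with $f\Z^k$, so tameness of one is tameness of the other.

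With the hypotheses verified, Theorem \ref{t:tame-typeOften}.3 yields that $fG_0$ (equivalently $f\Z^k$) is a tame family, and then Theorem \ref{t:tame-typeOften}.4, namely Lemma \ref{l:tametype}.4 applied to the homomorphism $h\colon\Z^k\to\T$, the translation action on $Y=\T$, and the base point $z$, gives that $m(f,z)\colon\Z^k\to\R$ is tame on the discrete group $\Z^k$ and that the associated subshift $(\Z^k,X_f)$ is tame. Since all the analytic content, the tree/branch construction producing an uncountable $\eps$-separated set and the consequent failure of the Baire class $1$ property, has already been carried out in Theorem \ref{2811} and packaged into Theorem \ref{t:tame-typeOften}, there is no genuinely hard step remaining; the only point demanding care is the bookkeeping that fits Definition \ref{d:mSturm} into that framework, in particular the harmless reduction to the image subgroup $G_0$ when $h$ fails to be injective.
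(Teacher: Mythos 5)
Your proposal is correct and is essentially the paper's own proof: both reduce Theorem \ref{multi} to Theorem \ref{t:tame-typeOften}, items (3) and (4), via the homomorphism $h\colon\Z^k\to\T$, $(n_1,\dots,n_k)\mapsto n_1\a_1+\cdots+n_k\a_k$, and the observation that the coloring $f$ has only finitely many discontinuities. Your extra care about non-injectivity of $h$ is harmless but unnecessary, since item (3) is stated for an arbitrary group homomorphism $h\colon G_0\to X$ rather than for a subgroup of $X$.
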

\begin{proof} In terms of Definition \ref{d:mSturm} consider the homomorphism
	$$h: \Z^k \to \T, \ \ \ (n_1, \dots,n_k) \mapsto n_1 \a_1 + \dots + n_k \a_k.$$
	The function $f$ induced by a given partition $\T=\cup_{i=0}^{d} [c_i,c_{i+1})$
	$$
	f: \T \to A:=\{0, \dots, d\}, \ \ \ f(t)=i \ \text{iff} \ t \in [c_i,c_{i+1}).
	$$
	has only finitely many discontinuities.
	Now Theorem \ref{t:tame-typeOften} (items (3) and (4)) guarantees that the corresponding
	$(k,d)$-coding function $m(f,z): \Z^k \to A \subset \R$ is tame for every $z \in \T$.
\end{proof}

Note that the tameness of the 
functions on $\Z^k$ from Theorem \ref{multi} 
%Example \ref{ex:multiSturm} 
follows also by results from \cite{GM-c}. 
In fact, such functions come from circularly ordered metric dynamical $\Z^k$-systems. See also Theorem \ref{t:multi} and Remark \ref{r:nonc} below. At least for $(1,d)$-codes, Theorem \ref{multi}, can be derived also from results of Pikula \cite{Pikula},
and 
% (more generally)
of Aujogue \cite{Auj}.

\sk
\subsection{Strong almost 1-1 equivalence and tameness}

		Recall that a $G$-factor $\pi: X \to Y$ 
		is said to be an \emph{almost one-to-one extension} if $$X_0:=\{x \in X: \ \ |\pi^{-1}(\pi(x))|=1\}$$ 
		is a residual subset of $X$. 
		We will say that $\pi : X \to Y$ is a \emph{strongly almost 1-1 extension} if $X \setminus X_0$ is at most countable. 
		%\footnote{Eli, do you know a special name for this case?} 
		
%280817 		
%Recall that compact dynamical $G$-systems $X, Y$ are \emph{semi-conjugate}
%(see for example \cite[p. 30]{diss:Jolivet})   

%280817  promoting the following 5 lines to REMARK
%300817

%Following to \cite[p. 30]{diss:Jolivet} 
We say that compact dynamical $G$-systems $X, Y$ are \emph{strongly almost 1-1 equivalent}  
if there exist a continuous $G$-map $\pi: X \to Y$ and two countable subsets $X_1 \subset X, Y_1 \subset Y$ such that the restriction $\pi: X \setminus X_1 \to Y \setminus Y_1$ is bijective.   
One may show that a surjective strongly almost 1-1 equivalence $\pi: X \to Y$ is exactly a strongly almost 1-1 extension.

\begin{remark} \label{r:semi-conj} 
In \cite[p. 30]{diss:Jolivet} Jolivet
calls a strong almost 1-1 equivalence ``semi-conjugation". However, the name semi-conjugation
is often used as a synonym to factor map; so we use ``strong almost 1-1 equivalence" instead.
\end{remark}

The next lemma is well known; for completeness we provide the short proof.

\begin{lem} \label{claim}
Let
$\pi: X \to Y$ be a continuous onto $G$-map of compact metric $G$-systems. Set
$$
X_0:=\{x \in X: \ \ |\pi^{-1}(\pi(x))|=1\}.
$$
Then the restriction map $\pi: X_0 \to Y_0$ is a topological homeomorphism of $G$-subspaces,
where $Y_0:=\pi(X_0)$.
\end{lem}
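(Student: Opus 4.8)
The plan is to show that $\pi: X_0 \to Y_0$ is a continuous bijection which is moreover a homeomorphism, using the standard fact that a continuous bijection onto a Hausdorff space becomes a homeomorphism as soon as it is a closed (or proper) map, and exploiting compactness. First I would check that $\pi$ restricts to a well-defined bijection. By the very definition of $X_0$, each point $x \in X_0$ has a singleton fiber, so if $x, x' \in X_0$ satisfy $\pi(x) = \pi(x')$ then both lie in $\pi^{-1}(\pi(x))$, which is a singleton, forcing $x = x'$; thus $\pi$ is injective on $X_0$. By construction $Y_0 := \pi(X_0)$, so $\pi: X_0 \to Y_0$ is surjective, hence bijective. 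That it is a $G$-map is inherited from $\pi$, once one observes that $X_0$ is $G$-invariant: since $G$ acts by homeomorphisms commuting with $\pi$, we have $|\pi^{-1}(\pi(gx))| = |\pi^{-1}(g\pi(x))| = |g\,\pi^{-1}(\pi(x))| = |\pi^{-1}(\pi(x))|$, so $gx \in X_0$ whenever $x \in X_0$.

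The substantive point is continuity of the inverse map. I would argue this directly using sequences, which is legitimate since $X$ and $Y$ are metric (hence first countable). Suppose $y_n \to y$ in $Y_0$ with $y_n = \pi(x_n)$, $y = \pi(x)$, and $x_n, x \in X_0$. I want $x_n \to x$ in $X$. Since $X$ is compact metric, it suffices to show that every subsequence of $\{x_n\}$ has a further subsequence converging to $x$. Given any subsequence, extract by compactness a convergent sub-subsequence $x_{n_k} \to x^* \in X$. By continuity of $\pi$ we get $\pi(x^*) = \lim \pi(x_{n_k}) = \lim y_{n_k} = y = \pi(x)$. Thus $x^*$ and $x$ lie in the same fiber $\pi^{-1}(y)$; but $x \in X_0$, so this fiber is the singleton $\{x\}$, whence $x^* = x$. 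Therefore $x_{n_k} \to x$, and since the subsequence was arbitrary, $x_n \to x$. This establishes that $\pi^{-1}: Y_0 \to X_0$ is continuous, so $\pi: X_0 \to Y_0$ is a homeomorphism.

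The step I expect to carry the real weight is the inverse-continuity argument just given; everything else (bijectivity, the $G$-equivariance, $G$-invariance of $X_0$) is essentially formal. The only delicate feature is that $X_0$ and $Y_0$ need not be closed in $X$ and $Y$, so one cannot simply invoke ``continuous bijection of compact Hausdorff spaces is a homeomorphism'' as a black box. The subsequence-within-compactum trick sidesteps this cleanly: it uses compactness of the \emph{ambient} space $X$ to produce limit points, and then uses the defining singleton-fiber property of $X_0$ to pin the limit down to $x$ itself, which is exactly where the hypothesis $x \in X_0$ is indispensable. This is the crux, and it is precisely why the conclusion is phrased as a homeomorphism of the subspaces $X_0, Y_0$ rather than of the whole systems.
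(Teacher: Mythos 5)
Your proof is correct and follows essentially the same route as the paper: both arguments reduce inverse continuity to a sequential statement ($y_n \to y$ in $Y_0$ forces $x_n \to x$) and settle it by combining compactness of the ambient space $X$ with the singleton-fiber property defining $X_0$. The only difference is packaging: where you run a sub-subsequence extraction and identify the limit via $\pi(x^*)=y$ and $\pi^{-1}(y)=\{x\}$, the paper notes that $\pi^{-1}(\{y\}\cup\{y_n\}_{n\in\N})$ is a compact subset of $X$ contained in $X_0$, on which $\pi$ restricts to a continuous bijection onto the compact set $\{y\}\cup\{y_n\}$, hence a homeomorphism --- that is, it does invoke the ``continuous bijection of compacta'' fact you declared unavailable, only applied to a well-chosen compact subset rather than to $X_0$ itself.
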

\begin{proof} Since $G$ is a group 
% observe that 
$X_0$ and $Y_0$ are $G$-invariant. 
The map $\pi: X_0 \to Y_0$ is a continuous bijection.
For every converging sequence $y_n \to y$, where $y_n,y \in Y_0$ the preimage $\pi^{-1}(\{y\} \cup \{y_n\}_{n \in \N})$ is a compact subset of $X$. On the other hand, $\pi^{-1}(\{y\} \cup \{y_n\}) \subset X_0$ by the definition of $X_0$. It follows that the restriction of $\pi$ to $\pi^{-1}(\{y\} \cup \{y_n\})$ is a homeomorphism and $\pi^{-1}(y_n)$ converges to $\pi^{-1}(y)$.
\end{proof}

From Theorem \ref{2811} one can derive the following result which generalizes
the above mentioned result of Huang \cite{H} from Example \ref{e:tameNOThns}.2.

%!!!
\begin{thm} \label{t:GenHuang} \cite{GM-tame} 
	Let $\pi: X \to Y$ be a strongly almost 1-1 extension 
	of compact metric $G$-systems. % such that
	%		$X \setminus X_0$ is countable,
	%		where
	%		$$
	%		X_0:=\{x \in X: \ \ |\pi^{-1}(\pi(x))|=1\}.
	%		$$
	Assume that  the dynamical system $(G,Y)$ is tame and that 
	the set
	$p^{-1}(y)$ is (at most) countable for every $p \in E(Y)$ and $y \in Y$,
\footnote{E.g., this latter condition is always satisfied when $Y$ is distal. Another example of such (non-distal) system is the Sturmian system.}
	then $(G,X)$ is also tame.
\end{thm}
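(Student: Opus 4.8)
The plan is to verify, for an arbitrary $f \in C(X)$, that the orbit $fG=\{fg:g\in G\}$ satisfies the hypotheses of Theorem \ref{2811}; by Definition \ref{d:tameDS}.1 this is exactly what is needed for tameness of $(G,X)$. Since every $g\in G$ acts on $X$ as a homeomorphism, each $fg=f\circ\tilde g$ is continuous, so $fG$ is (trivially even strongly) almost continuous, and the only substantial point is to check condition $(*)$: that every sequence in $fG$ has a subsequence converging pointwise off a countable set to a Baire class $1$ function.

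So I would fix a sequence $\{g_n\}\subset G$ and transfer the analysis to the tame base $(G,Y)$. By Theorem \ref{D-BFT} the enveloping semigroup $E(Y)$ is a Rosenthal (Fr\'echet) compactum, in particular sequentially compact, so after passing to a subsequence I may assume $\tilde g_n\to p$ pointwise on $Y$ for some $p\in E(Y)$. By Definition \ref{d:tameDS}.2 the map $p:Y\to Y$ is fragmented, equivalently (as $Y$ is compact metric) Baire class $1$, and by hypothesis each fiber $p^{-1}(y)$ is countable. Writing $X_0=\{x:|\pi^{-1}(\pi(x))|=1\}$ and $Y_0=\pi(X_0)$, Lemma \ref{claim} gives that $\pi:X_0\to Y_0$ is a homeomorphism; since $X\setminus X_0$ is countable, so are $Y\setminus Y_0\subseteq\pi(X\setminus X_0)$ and $p^{-1}(Y\setminus Y_0)$ (a countable union of countable fibers). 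Using that $\pi$ is injective over $Y_0$ and that $X\setminus X_0$ is countable, the set $C:=\pi^{-1}(p^{-1}(Y\setminus Y_0))$ is again countable.

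For $x\in X\setminus C$ the point $y^*:=p(\pi(x))$ lies in $Y_0$, so $\pi^{-1}(y^*)$ is a single point $x^*\in X_0$. As $\pi(g_n x)=\tilde g_n(\pi(x))\to y^*$, every accumulation point of $\{g_n x\}$ lies in $\pi^{-1}(y^*)=\{x^*\}$, and compactness and metrizability of $X$ then force $g_n x\to x^*$; hence $f(g_n x)\to f(x^*)=:\phi(x)$ by continuity of $f$. Thus $\{fg_n\}$ converges pointwise on $X\setminus C$ to $\phi$. To see $\phi\in\B_1(X\setminus C)$, I would factor $\phi=(f\circ\psi)\circ(p\circ\pi)$ on $X\setminus C$, where $\psi:=(\pi|_{X_0})^{-1}:Y_0\to X_0$ is continuous (Lemma \ref{claim}): the map $p\circ\pi$ is Baire class $1$ (continuous preimages of $F_\sigma$ sets are $F_\sigma$) and carries $X\setminus C$ into $Y_0$, while $f\circ\psi$ is continuous on $Y_0$, and post-composing a Baire-$1$ map with a continuous map again yields a Baire-$1$ map on the Polish space $X\setminus C$. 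This verifies $(*)$, and Theorem \ref{2811} yields that $fG$ is a tame family; since $f\in C(X)$ is arbitrary, $(G,X)$ is tame.

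I expect the bookkeeping around the exceptional set to be the main obstacle. One must simultaneously exploit that $Y\setminus Y_0$ is countable, that $p$ has countable fibers, and that $\pi$ is bijective over $Y_0$, in order to guarantee both that $C$ is countable and that off $C$ the orbit $\{g_n x\}$ genuinely converges \emph{in $X$} (not merely after projecting to $Y$). The second delicate step is the factorization showing $\phi$ is Baire class $1$, which rests on the homeomorphism $\pi:X_0\to Y_0$ of Lemma \ref{claim} and on stability of the Baire-$1$ (equivalently fragmented) property under composition with continuous maps.
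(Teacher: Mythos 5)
Your proposal is correct and follows essentially the same route as the paper's proof: extract a pointwise-convergent subsequence $\tilde g_n \to p$ in $E(Y)$ via Theorem \ref{D-BFT}, produce a countable exceptional set from the countable-fiber hypothesis together with the strong almost 1-1 property, identify the pointwise limit of $\{fg_n\}$ off that set using Lemma \ref{claim}, and conclude with Theorem \ref{2811}. The only (harmless) deviations are that you verify the hypotheses of Theorem \ref{2811} directly rather than arguing by contradiction, your exceptional set $\pi^{-1}(p^{-1}(Y\setminus Y_0))$ is slightly smaller than the paper's $X\setminus\pi^{-1}(Y_0\cap p^{-1}Y_0)$ (your unique-accumulation-point compactness argument makes the condition $\pi(x)\in Y_0$ unnecessary), and you certify $\phi\in\B_1(X\setminus C)$ by factoring through the fragmented map $p$, where the paper simply observes that $\phi$ is a pointwise limit of continuous functions on $X\setminus C$.
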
 
%\begin{thm} 
%Let $\pi: X \to Y$ be a homomorphism of compact metric $G$-systems
%such that
%$X \setminus X_0$ is countable,
%where
%$$
%X_0:=\{x \in X: \ \ |\pi^{-1}(\pi(x))|=1\}.
%$$
%Assume that $(G,Y)$ is tame and that
%% ttt
%the set
%$p^{-1}(y)$ is (at most) countable for every $p \in E(Y)$ and $y \in Y$
%(e.g., this latter condition is always satisfied when $Y$ is distal).
%Then $(G,X)$ is also tame.
%\end{thm} 
\begin{proof}
We have to show that every $f \in C(X)$ is tame. Assuming the contrary, suppose $fG$
contains an independent sequence $fs_n$.
Since $Y$ is metrizable and tame,
one can assume (by Theorem \ref{D-BFT}) that
the sequence $s_n$  converges pointwise to some element $p$ of $E(G,Y)$.
Consider the set $Y_0 \cap p^{-1}Y_0$, where $Y_0 = \pi(X_0)$. Since $p^{-1}(y)$ is countable for every $y \in Y \setminus Y_0$ it follows that
$Y \setminus (Y_0 \cap p^{-1}Y_0)$ is countable.
Therefore, by the definition of $X_0$ and the countability of $X \setminus X_0$, we see that
$X \setminus \pi^{-1}(Y_0 \cap p^{-1}Y_0)$ is also countable. 
%\footnote{I think this thm remains true for semigroup actions. In Lemma \ref{claim} $\pi: X_0 \to Y_0$ remains homeomorphism, though not necessarily  we cannot say that  At this place of the proof we just need to take }  
Now observe that the sequence
$(fs_n)(x)$ converges %to $\check{f}(p \pi(x))$
for every $x \in \pi^{-1}(Y_0 \cap p^{-1}Y_0)$. 
%2302 In fact,
Indeed if we denote $y=\pi(x)$ then
$s_ny$ converges to $py$ in $Y$. In fact we have $py \in Y_0$ (by the choice of $x$) and $s_n y \in Y_0$. 
By Lemma \ref{claim}, 
$\pi: X_0 \to Y_0$ is a $G$-homeomorphism. So we obtain that $s_n x$ converges
to $\pi^{-1}(py)$ in $X_0$.
Since $f: X \to \R$ is continuous, $(fs_n) (x)$ converges to $f(\pi^{-1}(py))$ in $\R$.
Each $fs_n$ is a continuous function, hence so is also its restriction to
$\pi^{-1}(Y_0 \cap p^{-1}Y_0)$.
Therefore the limit function 
%2302 $\phi$ 
$\phi: \pi^{-1}(Y_0 \cap p^{-1}Y_0) \to \R$ 
is Baire 1. Since C:=$X \setminus \pi^{-1}(Y_0 \cap p^{-1}Y_0)$ 
is countable and $fs_n$ is an independent sequence and
Theorem \ref{2811} provides the sought-after contradiction.
\end{proof}

\begin{cor} \label{c:semic} {\rm (Huang \cite{H} for cascades)}  
	Let $\pi: X \to Y$ be a strong almost 1-1 equivalence 
	%280817 adding one line 
%	(in the sense of Remark \ref{r:semi-conj}) 
%310817 (1 line) 
	 of compact metric $G$-systems,  where $Y$ is equicontinuous. 
	 Then $X$ is tame. 
\end{cor}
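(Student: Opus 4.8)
The plan is to reduce this immediately to Theorem~\ref{t:GenHuang}: once we know that the equicontinuous factor $Y$ is tame and has ``small'' fibers along $E(Y)$, the conclusion is automatic. So the work consists only in checking the two hypotheses of that theorem and in arranging that $\pi$ is surjective, so that the strong almost 1-1 equivalence becomes a strong almost 1-1 extension in the sense required there.

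First I would make $\pi$ onto. As $X$ is compact and $\pi$ continuous, $Z:=\pi(X)$ is a closed $G$-invariant subset of $Y$, hence a compact metric $G$-system; as a subsystem of an equicontinuous system it is again equicontinuous. Since the defining bijection $\pi: X\setminus X_1 \to Y\setminus Y_1$ forces $Y\setminus Y_1 \subseteq Z$, one checks that $Y\setminus Y_1 = Z\setminus Y_1$, so $\pi: X\setminus X_1 \to Z\setminus(Y_1\cap Z)$ is the same bijection while $Y_1\cap Z$ is still countable. Replacing $Y$ by $Z$ we may therefore assume that $\pi$ is a \emph{surjective} strong almost 1-1 equivalence, which by the observation preceding Remark~\ref{r:semi-conj} is exactly a strongly almost 1-1 extension; this supplies the structural hypothesis of Theorem~\ref{t:GenHuang}.

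Next I would extract the two required properties of $Y$ from the single standard fact that, for a compact metric equicontinuous $(G,Y)$, the enveloping semigroup $E(Y)$ is a compact group of \emph{homeomorphisms} of $Y$ (via equicontinuity and Ascoli's theorem the pointwise closure of $\tilde{G}$ in $Y^Y$ consists of continuous maps, and a compact group of continuous self-bijections of a compactum consists of homeomorphisms). From this: every $p\in E(Y)$ is continuous, so $Y$ is WAP and hence lies in $\Tame$ by the inclusion $\WAP\subset\Tame$; and every $p\in E(Y)$ is bijective, so $p^{-1}(y)$ is a single point, in particular at most countable, for all $p\in E(Y)$ and $y\in Y$. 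Alternatively one may simply invoke the footnote to Theorem~\ref{t:GenHuang}, since equicontinuous systems are distal.

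With both hypotheses verified, Theorem~\ref{t:GenHuang} applies directly and yields that $(G,X)$ is tame, as claimed. I do not expect any genuine obstacle here: the corollary is precisely the equicontinuous specialization of Theorem~\ref{t:GenHuang}, in which the countable-fiber condition on $E(Y)$ degenerates to singleton fibers. The only mildly delicate point is the reduction to a surjective $\pi$, and even that is routine once one observes that $Y\setminus Y_1\subseteq \pi(X)$.
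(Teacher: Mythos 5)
Your proof is correct and follows essentially the same route as the paper, whose entire proof reads: consider the induced factor $X \to \pi(X) \subset Y$ and apply Theorem \ref{t:GenHuang}. You have merely spelled out the details the paper leaves implicit --- that the corestriction to $\pi(X)$ is a surjective strong almost 1-1 equivalence, hence a strongly almost 1-1 extension, and that an equicontinuous system is WAP (hence tame) with $E(Y)$ a group of homeomorphisms (hence singleton, in particular countable, fibers $p^{-1}(y)$) --- all of which is accurate.
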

%
%An almost 1-1 extension $\pi: X \to Y$ of
%an equicontinuous metric $\Z$-system
%$Y$ with $X \setminus X_0$ countable,
%where $X_0=\{x \in X : |\pi^{-1}\pi(x)|=1\}$, is tame.
\begin{proof}
	Consider the induced factor $X \to f(X) \subset Y$ and 
	%taking into account Lemma \ref{l:semi11},  
	apply Theorem \ref{t:GenHuang}.  
\end{proof}

Recall the following: 
 %(as I understand, from Jolivet \cite{diss:Jolivet} this is an equivalent version. 
 %\footnote{but I could not find an exact formulation in the  references what he gave)} 
 \begin{problem} \label{prob:Pisot} (\textit{A version of Pisot conjecture}) \cite[page 31]{diss:Jolivet}
 	Is it true that every (unimodular) irreducible Pisot substitution dynamical system is 
	semi-conjugate to a toral translation~?   
 \end{problem}
 %\footnote{Pure discrete spectrum is a spectral reformulation that $X$ conjugates to a toral transformation.} 

%It is true for 2-letter Pisot substitutions (Holander-Solomyak \cite{HS}). 
%Recall also that In the reducible case Pisot conjecture is false - Thue-Morse substitution (not tame !).

 By Corollary \ref{c:semic} a related question is: 
 
 \begin{problem} \label{prob:PisotTame} {\rm (A weaker form of Pisot Conjecture)}  
 	Is it true that every substitutional symbolic dynamical system with Pisot conditions above is always tame ?  
 \end{problem}
 
% If not, then by Corollary \ref{c:semic} we get a counterexample to Pisot conjecture \ref{prob:Pisot}. 
 \sk 
 
 \begin{remark} \label{r:examples} T. Jolivet \cite[Theorem 3.1.1]{diss:Jolivet}, in the context of Pisot conjecture, discusses some (substitution) dynamical systems which semi-conjugate to a translation on the two-dimensional torus. In particular: 
 	
 	(a) (Rouzy) Tribonacci 3-letter substitution. 
 	
 	(b) Arnoux-Rouzy substitutions. 
 	
 	(c) Brun substitution.
 	
 	(d) Jacobi-Perron substitution.
 	\sk 		
 	By Corollary \ref{c:semic} all these systems are tame. 
 	%This demonstrates that   the class of tame (substitutional) subshifts is quite large.  
% 	Another interesting question might be: where such systems (or, systems in the context of Pisot Conjecture) are $\mathrm{Tame_1}$ ?  $\mathrm{Tame_2}$ ?	
 \end{remark}

 \sk 
 \subsection{A special class of generalized Sturmian systems}

 Let $R_{\a}$ be an irrational rotation of the torus $\T^d$. 
 In many cases a reasonably chosen subset $D \subset \T^d$ will yield a generalized Sturmian system.

 \begin{ex} \label{e:spheres}   %m0404 [A class of generalized Sturmian systems]
 	Let $\a =(\al_1, \dots, \al_d)$ be a vector in $\R^d,\ d \ge 2$
 	with $1,\al_1, \dots, \al_d$ independent over $\Q$.
 	Consider the minimal equicontinuous dynamical system $(R_\al,Y)$,
 	where $Y = \T^d = \R^d / \Z^d$ (the $d$-torus) and $R_\al y = y +\al$.
 	Let $D$ be a small closed $d$-dimensional ball in $\T^d$ and let $C =
 	\partial D$ be its boundary, a  $(d-1)$-sphere.
 	Fix $y_0 \in {\rm{int}} D$ and let
 	$X = X(D,y_0)$ be the symbolic
 	system generated by the function
 	$$
 	x_0 \in \{0,1\}^\Z \ {\text{defined by}}\  x_0(n) = \chi_D(R_\al^ny_0),
 	\qquad
 	X = \overline{\mathcal{O}_{\sig} x_0} \subset  \{0,1\}^\Z,
 	$$
 	where $\sig$ denotes the shift transformation.
 	This is a well known construction and it is not hard to check that
 	the system $(\sig, X)$
 	is minimal and admits $(R_\al,Y)$ as an almost 1-1 factor:
 	$$
 	\pi : (\sig, X) \to (R_\al, Y).
 	$$
 \end{ex}
 
 \begin{thm}\label{sphere}
 	There exists a ball $D \subset \T^d$ as above such that the corresponding symbolic dynamical system 
 	$(\sig, X)$ is tame. For such $D$ we then have a precise description of $E(\sigma,X) \setminus
 	\Z$ as the product set $\T^d \times \mathcal F$, where $\mathcal F$ is the collection of
 	ordered orthonormal bases for $\R^d$.	
 \end{thm}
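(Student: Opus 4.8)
The plan is to prove tameness first, by a direct analysis of how an orbit approaches the boundary sphere $C=\partial D$, and then to read off the enveloping semigroup from the same analysis. Throughout I fix the center $c$ of $D$ and regard its radius $r$ as a parameter; ``there exists a ball'' will mean that $r$ is chosen generically so that the orbit of every point meets $C$ transversally and approaches each point of $\T^d$ from every direction and order. First I would reduce the problem to a statement on the equicontinuous factor. By Lemma \ref{subshiftsAREcyclic} the system is cyclic with generator $f:=\pi_e\in C(X)$, so $(\sigma,X)$ is tame iff the orbit family $\{\pi_n\}_{n\in\Z}$ is tame on $X$. Since $\pi\colon X\to\T^d$ is an almost $1$-$1$ extension (Example \ref{e:spheres}) and $\pi_n$ agrees on the $1$-$1$ part $X_0$ with $\chi_D\circ R_\al^n$ transported through the homeomorphism $\pi|_{X_0}$ (Lemma \ref{claim}), Lemma \ref{l_1} identifies the independent subsequences of $\{\pi_n\}$ with those of
$$
F:=\{\chi_D\circ R_\al^n:\T^d\to\{0,1\}\}_{n\in\Z}
$$
on $\T^d$, the latter functions being continuous off the exceptional union of spheres. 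Thus it suffices to prove $F$ is a tame family on the Polish space $\T^d$.

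To this end I would apply Theorem \ref{2811}. Fix a sequence $\sigma^{n_k}$; after passing to a subsequence I may assume $n_k\al\to t$ in $\T^d$, and, lifting $n_k\al-\tilde t$ to a null sequence $w_k\in\R^d$, I may further assume (by compactness of the orthogonal group) that the normalized directions of $w_k$ converge, order by order, to an ordered orthonormal basis $b=(u_1,\dots,u_d)\in\mathcal F$. The limit $\phi(y)=\lim_k\chi_D(y+n_k\al)$ then equals $\chi_{\operatorname{int}D}(y+t)$ off the sphere $S:=C-t$, while on $S$ the side selected is decided by the first index $i$ with $\langle u_i,\nu(y+t)\rangle\neq0$, where $\nu$ is the outward radial normal. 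Because $C$ is round, imposing tangency of $u_1,\dots,u_j$ cuts out an equatorial subsphere of dimension $d-1-j$, so after $d-1$ tangency conditions only a two-point set remains, which $u_d$ resolves with opposite signs; hence the recursion uses the full basis and $\phi$ is everywhere defined. Moreover $\phi^{-1}(1)$ is an open region together with a finite union of locally closed hemisphere strata, hence $F_\sigma$, and likewise $\phi^{-1}(0)$, so $\phi\in\B_1(\T^d)$, giving hypothesis $(*)$ with empty exceptional set. The generic choice of $r$, guaranteeing transversal crossings of $C$, yields the almost continuity of $F$ (Definition \ref{d:EvCont}). Theorem \ref{2811} now gives that $F$, and therefore $(\sigma,X)$, is tame.

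For the enveloping semigroup I would use that, the system being metric and tame, every $p\in E(\sigma,X)\setminus\Z$ is the pointwise limit of a \emph{sequence} $\sigma^{n_k}$ (Theorem \ref{D-BFT} and the characterization recalled in the introduction). The directional analysis above attaches to each such limit a pair $(t,b)\in\T^d\times\mathcal F$ and shows conversely that $(t,b)$ determines the map; I would then prove that $(t,b)\mapsto p_{t,b}$ is a bijection onto $E\setminus\Z$. Surjectivity onto $E\setminus\Z$ is immediate from the classification of sequential limits, while surjectivity of the parametrization, that $t$ ranges over all of $\T^d$ and $b$ over all of $\mathcal F$, is exactly where the rational independence of $1,\al_1,\dots,\al_d$ enters, through an equidistribution argument showing that $\{n\al\}$ approaches each point of $\T^d$ along every orthonormal flag of directions and scales; the genericity of $r$ ensures no equatorial stratum is invisible to $\chi_D$. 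Injectivity is the essential point: if $(t,b)\neq(t',b')$ then the two maps already differ on a split fiber of $\pi$, since on some nonempty equatorial stratum of $S$ the two flags assign opposite symbols, and the round geometry ensures every level $u_1,\dots,u_d$ is detected by $\chi_D$.

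The hard part will be the combined directional and curvature analysis of the round sphere: showing that the side chosen in the limit is governed by an \emph{ordered orthonormal} basis rather than a partial flag or an unordered frame, that this makes each $\phi$ Baire class $1$, and that every basis is genuinely realized. This rests on expanding $|y+t+w_k-c|^2-r^2$ to successive orders in $w_k$: the first order produces $\langle u_1,\nu\rangle$, on the tangential equator the curvature term $|w_k|^2/2r$ forces the next direction $u_2$ to be orthogonal to $u_1$, and so on down the dimensions. This single computation simultaneously explains the orthonormality, the termination after exactly $d$ steps, and the two-point resolution at the bottom, the case $d=1$ recovering the double circle $\T\times\{\pm\}$ of the Sturmian system.
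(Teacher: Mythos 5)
Your overall architecture runs parallel to the paper's in its core parts: the extraction of an ordered orthonormal flag from a convergent subsequence of $n_k\alpha$, the resulting everywhere-defined Baire class~$1$ limit functions, and the parametrization of $E(\sigma,X)\setminus\Z$ by pairs in $\T^d\times\mathcal F$ are exactly the paper's Steps 2--4. Your packaging of tameness is different but legitimate: you reduce to tameness of the family $\{\chi_D\circ R_\alpha^n\}$ on $\T^d$ (via Lemma \ref{subshiftsAREcyclic}, Lemma \ref{claim} and Lemma \ref{l_1}, which indeed only require transporting independence in one direction) and then invoke Theorem \ref{2811}, whereas the paper argues directly that every element of $E(\sigma,X)$ is a pointwise limit of a \emph{sequence} $\sigma^{n_i}$ and hence of Baire class~$1$, which gives tameness for metrizable systems; your order (tameness first by Theorem \ref{2811}, sequentiality of limits afterwards from the tameness characterization) is also coherent.

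The genuine gap is at the very first step, which is the existence claim of the theorem itself. You stipulate that ``there exists a ball'' shall mean that $r$ is chosen ``generically so that the orbit of every point meets $C$ transversally and approaches each point of $\T^d$ from every direction and order''; this assumes precisely what must be proved. Transversality is not even the relevant notion (an orbit is a countable set of points, not a curve): what your appeal to almost continuity, via Definition \ref{d:EvCont} and Example \ref{ex:event}, actually requires is that for every $y\in\T^d$ the set $\{n\in\Z : y+n\alpha\in C\}$ is finite, equivalently that no translate of the sphere $C$ contains infinitely many points of $\{n\alpha : n\in\Z\}$. Establishing that some radius has this property is the content of the paper's Step 1, and it is not a soft genericity statement: one must show the set of bad radii is countable. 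The paper does this by noting that a sphere containing infinitely many orbit points is either determined by four of them in general position (so its radius lies in a countable set), or else those points are all coplanar, and the coplanar case is then excluded by an elimination argument that uses the rational independence of $1,\alpha_1,\dots,\alpha_d$. This exclusion is essential: a fixed $(d-2)$-sphere lies on a one-parameter pencil of $(d-1)$-spheres, so an infinite coplanar configuration would produce a \emph{continuum} of bad radii and genericity in $r$ could fail. None of this appears in your proposal, and your curvature expansion of $|y+t+w_k-c|^2-r^2$ cannot substitute for it: that computation governs the limit behaviour along one subsequence, not the finiteness of orbit--sphere incidences on which almost continuity rests. (A minor remark: the ``every direction and order'' half of your assumption, which you use for surjectivity of the parametrization, needs no condition on $r$ at all -- it follows from the density of $\{n\alpha\}$ in $\T^d$, as prescribing a flag only requires approximating suitable points $\beta+t_{i,0}v_0+\cdots+t_{i,d-1}v_{d-1}$ with rapidly decreasing scales.)
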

 
 \begin{proof}
 	{\bf 1.}\
 	First we show that a sphere $C \subset [0,1)^d \cong \T^d$ can be chosen
 	so that for every $y \in \T^d$  the set 
 	$(y + \{n \al : n \in Z\}) \cap C$ is finite.
 	We thank Benjamin Weiss for providing the following proof of this fact.
 	\begin{enumerate}
 		\item
 		For the case $d=2$ the argument is easy.
 		If $A$ is any countable subset of the square
 		$[0,1) \times [0,1)$ there are only a countable
 		number of circles that contain three points of $A$. These circles
 		have some countable collection of radii. Take any circle with a radius
 		which is different from all of them and no translate of it will contain
 		more than two points from the set $A$. Taking $A = \{n\al : n \in \Z\}$
 		we obtain the required circle.
 		\item
 		We next consider the case $d =3$, which easily generalizes to the general case $d \ge 3$.
 		What we have to show is that there can not be infinitely many points in
 		$$
 		A = \{(n\al_1 - [n\al_1],\al_2 - [n\al_2],\al_3 - [n\al_3]): n \in \Z\}
 		$$
 		that lie on a plane.
 		For if that is the case, we consider all $4$-tuples of elements from the set
 		$A$ that do not lie on a plane to get a countable set of radii
 		for the spheres that they determine. 
 		Then taking a sphere with radius different from that collection we obtain our required sphere.
 		In fact, if a sphere contains infinitely many points of
 		$A$ and no $4$-tuple from $A$ determines it then they all lie on a single plane.
 		
 		So suppose that there are infinitely many points in $A$ whose inner
 		product with a vector $v =(z,x,y)$  is always equal to $1$.
 		This means that there are infinitely many
 		equations of the form:
 		\begin{equation}\tag{$*$}
 			z\al_1 + x\al_2 + y\al_3 =
 			1/n + z[n\al_1]/n + x[n\al_2]/n + y[n\al_3]/n.
 		\end{equation}
 		Subtract two such equations with the second using $m$ much bigger than
 		$n$ so that the coefficient of $y$ cannot vanish.
 		We can express $y = rz + sx + t$ with $r, s$  and $t$ rational.
 		This means that we can replace ($*$) by
 		\begin{equation}\tag{$**$}
 			z \al_1 + x\al_2 + y\al_3 =
 			1/n + t[n\al_3]/n + z([n\al_1]/n + r[n\al_3]/n)  +
 			x([n\al_2]/n +s[n\al_3]/n).
 		\end{equation}
 		Now $r, s$ and $t$ have some fixed denominators and (having infinitely
 		many choices) we can take another equation like ($**$) where $n$
 		(and the corresponding $r,s, t$) is replaced by some much bigger $k$,
 		then subtract again to obtain an equation of the form $x = pz + q$ with $p$ and $q$ rational.
 		Finally one more step will show that $z$ itself is rational.
 		However, in view of ($*$), this contradicts the independence of
 		$1, \al_1, \al_2, \al_3$ over $\Q$ and our proof is complete.
 	\end{enumerate}
 	
 	{\bf 2.}\
 	Next we show that for $C$ as above
 	\begin{quote}
 		for every converging sequence $n_i\al$, say $n_i\al \to \beta \in \T^d
 		\cong E(R_\al,\T^d)$, there exists a subsequence $\{n_{i_j}\}$ such
 		that for every $y \in \T^d$, $y + n_{i_j}\al$ is either
 		eventually in the interior of $D$ or eventually in its exterior.
 		%		\footnote{Eli, as I understand this means that the sequence of functions $\chi_D n_{i_j}$ pointwise converges on $\T^d$. Right ? If so, is it true that the limit function is Baire 1 ? Then we can apply Theorem \ref{2811}.   
 			%		 One more question: maybe easier to speak about $D$ and its complement ?} 
 	\end{quote}
 	Clearly we only need to consider points $y \in C - \beta$.
 	Renaming we can now assume that $n_i \al \to 0$ and that $y \in C$.
 	Passing to a subsequence if necessary we can further assume that
 	the sequence of unit vectors $\frac{n_i\al}{\|n_i \al\|}$ converges,
 	\begin{equation*}
 		\frac{n_i\al}{\|n_i \al\|} \to v_0 \in \mathbb{S}^{d-1}.
 	\end{equation*}
 	In order to simplify the notation we now assume that $C$ is centered
 	at the origin.
 	For every point $y \in C$ where $\langle y, v_0 \rangle \ne 0$
 	we have that $y + n_i\al$ is either eventually in the interior of $D$
 	or eventually in its exterior. On the other hand, for the points $y \in C$
 	with $\langle y, v_0 \rangle = 0$ this is not necessarily the case.
 	In order to deal with these points we need a more detailed information
 	on the convergence of $n_{i}\al$ to $\beta$. At this stage we consider
 	the sequence of orthogonal projections of the vectors $n_{i}\al$ onto
 	the subspace $V_1 = \{u \in \R^d : \langle u , v_0 \rangle =0\}$, say
 	$u_i= \proj_{v_0}(n_i \al) \to u = \proj_{v_0}(\beta)$.
 	If it happens that eventually $u_i =0$, this means that all
 	but a finite number of the $n_i \al$'s are on the line defined by $v_0$
 	and our required property is certainly satisfied
 	\footnote{Actually this possibility can not occur,
 		as is shown in the first step of the proof.}.
 	Otherwise
 	we choose a subsequence (again using the same index) so that
 	\begin{equation*}
 		\frac{u_i}{\|u_i\|} \to v_1 \in \mathbb{S}^{d-2}.
 	\end{equation*}
 	Again (as we will soon explain) it is not hard to see that for points
 	$y \in C \cap V_1$ with
 	$\langle y, v_1 \rangle \ne 0$ we have that $y + n_{i}\al$ is either eventually in the interior of $D$ or eventually in its exterior.
 	For points $y \in C \cap V_1$ with $\langle y, v \rangle = 0$ we have
 	to repeat this procedure. Considering the subspace
 	$V_2 = \{u \in V_1 : \langle u , v_1 \rangle =0\}$,
 	we define the sequence of projections $u'_i= \proj_{v_1}(u_i) \in V_2$
 	and pass to a further
 	subsequence which converges to a vector $v_2$
 	\begin{equation*}\label{dir}
 		\frac{u'_i}{\|u'_i\|} \to v_2 \in \mathbb{S}^{d-3}.
 	\end{equation*}
 	Inductively this procedure will produce an {\bf ordered orthonormal basis}
 	$\{v_0, v_1, \dots,v_{d-1}\}$ for $\R^d$ and a final
 	subsequence (which for simplicity we still denote
 	as $n_i$) such that
 	
 	\begin{quote}
 		for each $y \in \T^d$,
 		$y + n_i\al$ is either eventually in the \\
 		interior of $D$
 		or it is eventually in its exterior.
 	\end{quote}
 	
 	This is clear for points $y \in \T^d$ such that $y + \beta \not\in C$.
 	Now suppose we are given a point $y$ with $y + \beta \in C$.
 	We let $k$ be the first index with
 	$\langle y + \beta, v_k \rangle \ne 0$. As $\{v_0, v_1, v_2, \dots,v_{d-1}\}$
 	is a basis for $\R^d$ such $k$ exists.
 	We claim that
 	the sequence $y + n_i \al$ is either eventually in the interior of $D$ or it is eventually in its exterior.
 	To see this consider the affine hyperplane
 	which is tangent to $C$ at $y +\beta$
 	(which contains the vectors $\{v_0,\dots, v_{k-1}\}$).
 	Our assumption implies that the sequence $y + n_i \al$ is either
 	eventually on the opposite side of this 
 	%130116 hyperplain 
 	hyperplane from the sphere, in which
 	case it certainly lies in the exterior of $D$, or it eventually lies on the same side as the sphere.
 	However in this latter case it can not be squeezed
 	in between the sphere and the tangent hyperplane, as this would imply
 	$\langle y + \beta, v_k \rangle = 0$, contradicting our assumption.
 	Thus it follows that in this case the sequence
 	$y + n_i \al$ is eventually in the interior of $D$.

 	{\bf 3.}\
 	Let now $p$ be an element of $E(\sig, X)$.
 	We choose a {\bf net} $\{n_\nu\} \subset \Z$ with $\sig^{n_\nu} \to p$.
 	It defines uniquely an element $\beta \in E(Y) \cong \T^d$ so that
 	$\pi(px) = \pi(x) + \beta$ for every $x \in X$.
 	Taking a subnet if necessary we can assume
 	that the net $\frac{\beta - n_\nu\al}{\|\beta - n_\nu \al\|}$ converges  to some $v_0 \in S^{d-1}$. And, as above, proceeding by induction we assume likewise that all the corresponding limits $\{v_0,\dots, v_{k-1}\}$ exist.
 	
 	Next we choose a {\bf sequence} $\{n_i\}$ such that
 	$n_i \al \to \beta$,
 	$\frac{\beta - n_i\al}{\|\beta - n_i \al\|} \to v_0$ etc.
 	We conclude that $\sig^{n_i} \to p$.
 	Thus every element of $E (\sig,X)$ is obtained as
 	a limit of a sequence in $\Z$ and is therefore of Baire class 1.
 	
 	{\bf 4.}\	
 	From the proof we see that the elements of $E(\sigma,X) \setminus
 	\Z$ can be parametrized by the set
 	$\T^d \times \mathcal F$, where $\mathcal F$ is the collection of
 	ordered orthonormal bases for $\R^d$,
 	$\ p \mapsto (\beta, \{v_0,\dots, v_{d-1}\})$.
 \end{proof}

 \sk 
\section{Remarks about order preserving systems}
\label{s:order}

\subsection{Order preserving action on the unit interval}
\label{s:orderOnI}

Recall that for the group $G=\H_+[0,1]$ comprising the orientation preserving self-homeomorphisms
of the unit interval,
the $G$-system $X=[0,1]$  with the obvious $G$-action is tame \cite{GM-AffComp}.
One way to see this is to observe that the enveloping semigroup of this dynamical
system naturally embeds into the Helly compact space (and hence is a Rosenthal compact space).
By Theorem \ref{D-BFT}, $(G,X)$ is tame. 
%By Theorem \ref{t:tame}  this means that every $p \in E(X)$ is a Baire class 1 map. In fact we can say more.
%As every monotonic map $[0,1] \to [0,1]$ has at most countably many discontinuities,
%% and in particular is Baire 1,
%this holds also for every $p \in E(G,X)$. 
We list here some other properties of $\H_+[0,1]$.

\begin{remark} \label{H_+[0,1]}
Let $G:=\H_+[0,1]$. Then
\ben
\item  \emph{(Pestov \cite{Pest98})} \ $G$ is extremely amenable.
\item \cite{GM-suc} \ $\WAP(G)=\Asp(G)=\SUC(G)=\{constants\}$ and every Asplund  representation of $G$ is trivial.
\item \cite{GM-AffComp} \ $G$ is representable on a (separable) Rosenthal space. 
\item
%2503 (\cite[Example 9.23]{RD} and \cite[Example 4.4]{UspComp})
%Eli, I errorously included \cite{RD}; they mention only the case of H[0,1] not \H_+[0,1]
\emph{(Uspenskij \cite[Example 4.4]{UspComp})}
\ $G$ is Roelcke precompact.
\item
%0202  $\Tame(G) \cap \UC(G) = \UC(G)$
$\UC(G) \subset \Tame(G)$,
that is, the Roelcke compactification of $G$ is tame.
\item $\Tame(G) \neq \UC (G)$.
\item $\Tame(G) \neq \RUC(G)$, that is, $G$ admits a transitive dynamical system which is not tame.
\item  \cite{MePolev}  \ $\H_+[0,1]$ and $\H_+(\T)$ are minimal topological groups.
\een
\end{remark}

In properties (5) and (6) we answer two questions of T. Ibarlucia 
which are related to  \cite{Ibar}.
For the details see \cite{GM-tame}.  

\begin{thm} \label{t:RP}
The Polish group $G=\H_+(\T)$ is Roelcke precompact.
%\footnote{what about the Roelcke compactification of $G$. Is it tame ? c-ordered ?}
\end{thm}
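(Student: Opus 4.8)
The plan is to reduce the statement to Uspenskij's theorem that $\H_+[0,1]$ is Roelcke precompact (Remark \ref{H_+[0,1]}.4) by splitting $G=\H_+(\T)$ into a compact rotation part and the stabilizer of a base point. Let $K=\{R_\theta:\theta\in\T\}$ be the rotation subgroup; since $\theta\mapsto R_\theta$ embeds the compact group $\T$ into $G$ for the topology of uniform convergence, $K$ is a compact subgroup. Let $H=\{h\in G:\ h(0)=0\}$ be the stabilizer of $0\in\T$. Viewing $\T=[0,1]/(0\sim 1)$, an orientation preserving homeomorphism fixing $0$ is exactly an orientation preserving self-homeomorphism of $[0,1]$, so $H$ is topologically isomorphic to $\H_+[0,1]$ and is thus Roelcke precompact. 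Finally every $h\in G$ factors as $h=R_{h(0)}\cdot(R_{-h(0)}h)$ with $R_{-h(0)}h\in H$, giving the decomposition $G=KH$.

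It then suffices to establish a general reduction: \emph{if a topological group $G$ equals a product $KH$ of a compact subgroup $K$ and a Roelcke precompact subgroup $H$, then $G$ is Roelcke precompact.} I would prove this directly from the definition. Fix a neighborhood $V$ of $e$ and a symmetric neighborhood $V_1$ with $V_1V_1\subseteq V$. The crucial ingredient is a tube-lemma property of the compact group $K$: there is a neighborhood $W\subseteq V_1$ of $e$ with $kWk^{-1}\subseteq V_1$ for \emph{all} $k\in K$ simultaneously, obtained by applying continuity of the conjugation map $K\times G\to G$ along $K\times\{e\}$ together with compactness of $K$. Using Roelcke precompactness of $H$, pick a finite $F_H\subseteq H$ with $H\subseteq WF_HW$, and using total boundedness of $K$, pick a finite $E\subseteq K$ with $K\subseteq WE$. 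Then for $g=kh$ with $k=w_3e$ $(w_3\in W,\ e\in E)$ and $h=w_1fw_2$ $(w_i\in W,\ f\in F_H)$ one computes
$$
g=w_3\,e\,w_1\,f\,w_2=w_3\,(ew_1e^{-1})\,(ef)\,w_2\in (V_1V_1)\,(EF_H)\,V_1\subseteq V\,F\,V,
$$
where $F:=EF_H$ is finite. Hence $G=VFV$ for each $V$, which is precisely Roelcke precompactness.

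The main obstacle, and essentially the only subtlety, is hidden in this last computation. The naive attempt covers $K$ by finitely many translates of $W$ and multiplies the coverings of $K$ and $H$, but the finite set $E$ so produced depends on $W$, whereas moving the stray small factor $w_1$ of $K$ past the finite representatives of $H$ needs $W$ to be small \emph{relative to} $E$; this circularity is what must be broken. The uniform conjugation estimate $kWk^{-1}\subseteq V_1$ for all $k\in K$ resolves it, since it controls $ew_1e^{-1}$ before $E$ is even chosen, keeping the product inside $V$. Everything else is routine bookkeeping. (Alternatively one could give a self-contained combinatorial argument: fix $n$ with $1/n<\eps$, classify $h\in\H_+(\T)$ by the monotone ``staircase pattern'' of its graph relative to the $n\times n$ grid on $\T\times\T$; there are only finitely many such patterns, and two homeomorphisms with the same pattern differ by left and right reparametrizations moving points by $O(1/n)$, hence lie in a common double coset $V_\eps f V_\eps$.)
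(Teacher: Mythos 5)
Your proposal is correct and follows essentially the same route as the paper: the same decomposition $G=KH$ with $K\cong\T$ the rotation subgroup and $H=\St(z)\cong \H_+[0,1]$ the stabilizer of a point (Roelcke precompact by Uspenskij), combined with the general fact that a product of a compact set and a Roelcke precompact subgroup is Roelcke precompact. The only difference is that the paper cites this reduction as ``easy to verify either directly or by applying \cite[Prop. 9.17]{RD}'', whereas you supply the direct verification, correctly identifying the uniform conjugation estimate $kWk^{-1}\subseteq V_1$ for all $k\in K$ as the point that makes the bookkeeping go through.
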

\begin{proof}
First a general fact: if a topological group $G$ can be represented as
$G=KH$,
where $K$ is a compact subset and $H$ a Roelcke-precompact subgroup, then $G$ is also Roelcke-precompact.
This is easy to verify either directly
or by applying \cite[Prop. 9.17]{RD}.
As was mentioned in Theorem \ref{H_+[0,1]}.4,  $\H_+[0,1]$ is Roelcke precompact.
Now, observe that in our case
$G = KH$, where $H:=St(1) \cong \H_+[0,1]$ is the stability group of
$1 \in \T$ and $K \cong \T$ is the subgroup
of $G$ consisting of the rotations of the circle.
Indeed, the coset space $G/H$ is homeomorphic to $\T$ and there exists a natural continuous section $s: \T \to K \subset G$.
\end{proof}

\subsection{Circularly ordered systems} 

In \cite{GM-c} we introduce the class of circularly ordered (c-ordered) dynamical systems which naturally generalizes the class of linearly ordered systems. A compact $S$-system $X$ is said to be c-ordered 
(notation $(S,X) \in $ CODS) 
if the topological space $X$ is c-ordered and every $s$-translation $X \to X$ is c-order preserving.

 \begin{ex} \label{ex:c} \ 
 	\ben 
 	\item With every c-ordered compact space there is the associated topological group $\H_+(X)$ of c-order preserving homeomorphisms. Certainly, $X$ is a c-ordered $\H_+(X)$-system. 
	Every linearly ordered $G$-system is c-ordered. %, \cite{GM-c}.
	%www  
 	\item 
 	The Sturmian like $\Z^k$-subshifts 
 	%Example \ref{ex:multiSturm} 
 	(see Theorem \ref{multi}) 
 	admit a circular order. Moreover, their enveloping semigroups also are c-ordered systems, \cite{GM-c}. 
 	\item 
 	Every element $g$ of the projective group $\PGL(2,\R)$ defines a homeomorphism on the circle $\T \to \T$ which 
	is either c-order preserving or $c$-order reversing. % defines a circularly ordered system.  
 	%Recall that by a result of E. Akin the enveloping semigroup $E$ is first countable. It follows that this dynamical system is tame. 
 	%This gives another possibility to conclude that the $\PGL(2,\R)$-system $S_1$ is tame. 
 	\een
 \end{ex}

\begin{thm} \label{t:CoisWRN} \cite{GM-c} 
	\ben 
	\item 
	Every c-ordered compact, not necessarily metrizable, $S$-space $X$ is Rosenthal representable 
	(that is, $\mathrm{WRN}$), hence, in particular, tame. 
	So, $\mathrm{CODS} \subset \mathrm{WRN} \subset \mathrm{Tame}$. 
		\item \label{t:GrRepr}  
		The topological group $\H_+(X)$ (with compact open topology) is 
		Rosenthal representable for every c-ordered compact space $X$. 
		For example, this is the case for $\H_+(\T)$.  
		\een
\end{thm}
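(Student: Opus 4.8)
The plan is to prove both items by exhibiting, on a circularly ordered compact space, a sufficiently rich family of tame continuous functions and then invoking the representation machinery recorded in Example \ref{ex:tame} and Theorem \ref{t:tame-f}.

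For (1), fix a c-ordered compact $S$-space $X$ and write $\mathrm{Var}(f)$ for the total variation of a bounded $f\colon X\to\R$ computed along the circular order. The first observation I would make is that $S$ acts by variation-nonincreasing maps: if $\tilde s\colon X\to X$ is the (continuous, c-order preserving) $s$-translation, then any cyclically ordered tuple is carried by $\tilde s$ to a weakly cyclically ordered tuple, so every variation sum of $fs=f\circ\tilde s$ is already a variation sum of $f$, whence $\mathrm{Var}(fs)\le\mathrm{Var}(f)$. Consequently, for a continuous $f$ of bounded variation the whole orbit $fS$ is uniformly bounded and of uniformly bounded total variation; by Example \ref{ex:tame}.6 the family $fS$ is tame, and since on the compact space $X$ we have $f\in\RUC(X)$, Lemma \ref{l:tametype}.1 gives $f\in\Tame(X)$.

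Next I would verify that the continuous functions of bounded variation separate the points of $X$. Given $x\ne y$, the circular order topology supplies an arc containing exactly one of them, and, ordered spaces being monotonically normal, one can build a continuous monotone (hence bounded variation) Urysohn-type function on an enclosing arc that distinguishes $x$ from $y$. Since $\Tame(X)$ is a norm-closed $S$-invariant subalgebra of $C(X)$ which, by the previous step, contains this separating family, the Stone--Weierstrass theorem yields $\Tame(X)=C(X)$; that is, $(S,X)$ is tame. Finally, the separating, $S$-invariant, bounded tame family just produced is equivariantly representable on a Rosenthal Banach space (the note following Example \ref{ex:tame}, and \cite{GM-tame}), so $(S,X)$ is $\mathrm{WRN}$. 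Combined with the trivial inclusion $\mathrm{WRN}\subset\Tame$ (bounded subsets of a Rosenthal space are tame families on the dual, Example \ref{ex:tame}.1), this establishes $\mathrm{CODS}\subset\mathrm{WRN}\subset\Tame$.

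For (2), I would apply (1) to the natural action of $G=\H_+(X)$ on the c-ordered compact space $X$: each $g\in G$ is a c-order preserving homeomorphism, so $(G,X)\in\mathrm{CODS}$ and hence $X$ is $\mathrm{WRN}$, represented on a Rosenthal space $V$. Because $X$ is compact, the evaluation action $G\times X\to X$ is jointly continuous and topologically effective for the compact-open topology on $G$, so the homomorphism $G\to\Iso(V)$ induced by the representation is a topological embedding; therefore $G$ is Rosenthal representable, and taking $X=\T$ gives the statement for $\H_+(\T)$. The main obstacle I anticipate lies in (1): producing enough continuous bounded-variation functions to separate points when $X$ is non-metrizable (no explicit formulas are available, and one must exploit the normality properties of ordered spaces), and, more essentially, upgrading mere tameness to genuine Rosenthal representability in the non-metrizable regime, since the dichotomy of Theorem \ref{D-BFT} is then unavailable and one must rely instead on the direct representation theorem for tame families. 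For (2) the remaining delicate point is to confirm that the abstract representation of the system is faithful and open enough to embed the topological group $G$.
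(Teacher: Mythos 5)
You should first be aware that the paper itself contains no proof of this theorem: it is quoted from \cite{GM-c}. Measured against that source, your outline reconstructs its strategy faithfully: c-order preserving translations do not increase circular variation, so the set $F=\{f\in C(X):\ \|f\|_\infty\le 1,\ \mathrm{Var}(f)\le 1\}$ is a bounded, $S$-invariant, point-separating family which is tame by Example \ref{ex:tame}.6 (the result of \cite{Me-Helly}), and a representation theorem for such families then yields the Rosenthal representation; your Stone--Weierstrass deduction that $(S,X)$ is tame, and the inclusion $\mathrm{WRN}\subset\Tame$ via Example \ref{ex:tame}.1, are both correct.

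The one step that does not hold up as written is the passage from the tame invariant family to $\mathrm{WRN}$. The note following Example \ref{ex:tame} produces only a representation of the \emph{family}: a pair $\nu\colon F\to V$, $\alpha\colon X\to V^*$ with no action of $S$ on $V$ and no equivariance, so by itself it says nothing about Rosenthal representability of the dynamical system. Nor can one argue ``tame $\Rightarrow$ WRN'' abstractly: in the non-metrizable setting this implication is not available (Theorem \ref{D-BFT} needs metrizability, and within this paper tameness yields WRN only for \emph{cyclic} systems, Theorem \ref{t:tame-f}.6; a c-ordered system need not be cyclic, in contrast with subshifts, Lemma \ref{subshiftsAREcyclic}). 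What is actually required --- and what \cite{GM-c} invokes --- is the equivariant theorem of \cite{GM-tame}: a compact $S$-system admitting a bounded, $S$-invariant, point-separating tame family in $C(X)$ is WRN. You do cite \cite{GM-tame} and explicitly flag this as the crux, so this is a gap of precision rather than of conception, but the whole theorem rests on that result, not on the note you lean on. Two smaller repairs: monotone normality does not produce monotone functions, so the separation step should instead run through the order dichotomy (a jump in the arc between $x$ and $y$ gives a separating clopen arc; otherwise that arc is densely ordered and order-complete, and a dyadic Urysohn-type construction gives a continuous monotone separating function). In item (2), your compactness argument for inverse continuity of $G\to\Iso(V)$ does work (uniform convergence of the coefficients $x\mapsto\langle v,\alpha(g_ix)\rangle$ together with point separation forces $g_i\to\mathrm{id}$ uniformly on $X$), but the forward continuity of the co-homomorphism into $\Iso(V)$ with the strong operator topology is not automatic; it is supplied by the equivariant construction of \cite{GM-rose,GM-tame}.
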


The Ellis compactification $j: G \to E(G,\T)$ of the group $G=\H_+(\T)$ is a topological embedding.
In fact, observe that
the compact open topology on $j(G) \subset C_+(\T, \T)$ coincides
with the pointwise topology.
%So, in fact $G$ is embedded into the pointwise closure $K$ of $C_+(\T)$.
%Then $K$ is a compact right topological semigroup which can be treated as a circle analog of the Helly compactum.
This observation implies, by \cite[Remark 4.14]{GM-survey}, that
$\Tame(G)$ separates points and closed subsets. For any group $G$ having sufficiently many tame functions the universal tame semigroup compactification $G \to G^{\Tame}$ is a topological embedding.

\begin{remarks} \ 
	\ben 
	\item Regarding Theorem \ref{H_+[0,1]}.2 we note that recently Ben-Yaacov and Tsankov \cite{BenTsankov}
	found some other Polish groups $G$ for which $\WAP(G) = \{constants\}$ (and which are therefore also reflexively trivial).
	
	\item Although $G=\H_+(\T)$ is representable on a (separable) Rosenthal Banach space, 
	the group $\H_+(\T)$ is Asplund-trivial. Indeed, it is algebraically simple
	\cite[Theorem 4.3]{Ghys} and 
	contains a copy of $\H_+[0,1]=St(z)$ (a stabilizer group of some point $z \in \T$) which is Asplund-trivial \cite{GM-suc}.
	Now, as in \cite[Lemma 10.2]{GM-suc},
	use an observation of Pestov, which implies that 
	any continuous Asplund representation of $\H_+(\T)$ is trivial. 
	\een
\end{remarks}

\begin{question}  Is it true that the universal tame compactification 
	$u_t: G \to G^{\Tame}$ is an embedding for every Polish group $G$ ? 
\end{question}

The universal Polish group $G=H[0,1]^{\N}$ is a natural candidate for a counterexample. 

%\footnote{For every locally compact group $G$, $u_t$ is a topological embedding. 
%	Indeed, 1-point compactification is a WAP compactification. For $SL(2,\R)$ the 1-point compactification is the universal WAP compactification. It is interesting to evaluate $u_t$ for $SL(2,\R)$. Eli, is it true that it is the enveloping semigroup of the natural action of $SL(2,\R)$ on the circle ?}

\sk 
	\subsection{Noncommutative Sturmian like symbolic systems} 
	\label{s:nonc} 
		
%		
%		Let us ask general question: which abstract finite coloring $G \to \{0, \dots ,d\}$ of a discrete copy of some group $G$ are tame defining a minimal tame symbolic system. 
%		Any Sturmian bisequence $\Z \to \{0,1\}$ is a prototype.     
	The following construction yields many tame coding functions for subgroups of $\H_+(\T)$,
	and via any abstract  homomorphism $h: G \to \H_+(\T)$,	 
	we obtain coding functions on $G$.
%	There exists a natural geometric embedding of the projective linear group $\PSL_2(\R)$ into $\H_+(\T)$ 
%	(see for example \cite{Ghys}). 
%	So, among others we can consider subgroups of $\PSL_2(\R)$. 
% Moreover these coding functions naturally extend classical Sturmian (and some new Sturmian like codings) using natural cutting of $\T$ in finitely many arcs.  
%iii  
%E.g., considering $\PSL_2(\Z)$ as subgroups of $\H(\T)$ we get this way 
%Sturmian like $\PSL_2(\Z)$-codings  %m0404 $\PSL_2(\R)$-codings.
%and tame subshifts $X \subset \{0,1\}^{\Z_2 \ast \Z_3}$. 
%	
	
	%220420  H_+() instead of H()
	 \begin{thm} \label{t:multi} 
	 		Let $h: G \to \H_+(\T)$ be a group homomorphism and let
	 	$$
	 	f: \T \to A:=\{0, \dots ,d\}%, \ \ \ f(t)=i \ \text{iff} \ t \in [c_i,c_{i+1}).
	 	$$
	 	be a finite coloring map induced by a finite partition of the circle $\T$ comprising disjoint arcs. 
	  
	 	%$G \subset \H_+(\T)$ be a ny subgroup. 
	 	Then, for any given point $z \in \T$ we have:
	 	\ben  	
	 \item the coding function $\varphi=m(f,z): G \to \{0, \dots ,d\}$ is tame on the discrete copy of $G$. 
	 \item the corresponding symbolic $G$-system $G_{\varphi} \subset  \{0,1, \cdots, d\}^{G}$ is tame. 
	 \item  if the action of $G$ on $\T$ is minimal then, in many cases, the $G$-system $G_{\varphi}$ is minimal and circularly ordered.
	 \een
	 \end{thm}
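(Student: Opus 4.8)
The plan is to reduce part (1) to the circular order of $\T$ together with the bounded variation criterion of Example~\ref{ex:tame}.6, then feed the outcome into Lemma~\ref{l:tametype}.4; parts (2) and (3) will follow from the cyclic compactification machinery and an almost one-to-one lifting argument.

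First I would work on the circularly ordered compactum $\T$ carrying the tautological action of $\H_+(\T)$. The coloring $f:\T\to\{0,\dots,d\}$ is a step function, and its circular total variation is the sum of the absolute sizes of its finitely many jumps at the arc endpoints. Each $h(g)\in\H_+(\T)$ is an orientation preserving homeomorphism, hence carries arcs to arcs preserving the cyclic order; thus every $f\circ h(g)$ is again a step function exhibiting the same values in the same cyclic order, so the family $f\,h(G)=\{f\circ h(g):g\in G\}$ has a uniform bound on its total variation as well as on its sup norm. By Example~\ref{ex:tame}.6 applied to the circularly ordered set $\T$, the family $f\,h(G)$ is therefore tame. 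Now I would apply Lemma~\ref{l:tametype}.4 with $L=G$, $S=\H_+(\T)$, $Y=\T$ and the given $f$: since $fL=f\,h(G)$ is tame, for every $z\in\T$ the coding function $\varphi=m(f,z):G\to\R$ is tame on the discrete group $G$, which is (1).

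For (2) I would invoke Lemma~\ref{l:tametype}.1 with $X=G$ discrete, where every bounded function lies in $\RUC(G)$: since $\varphi\,G$ is a tame family we get $\varphi\in\Tame(G)$ together with a jointly continuous $G$-compactification $\nu:G\to Y$ onto a tame system with $\varphi=\tilde\varphi\circ\nu$. By Lemma~\ref{subshiftsAREcyclic} and Remark~\ref{r:cycl-comes} this cyclic system $Y$ is exactly the orbit closure $G_\varphi=\cls_p\{g\varphi\}\subset\{0,\dots,d\}^G$ (indeed $\delta_\varphi(x)=x\varphi$), so $G_\varphi$ is tame; alternatively, $G_\varphi$ is a factor of the tame system $Y$, and tameness passes to factors.

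For (3), assuming $(G,\T)$ minimal, I would construct the factor map $\pi:G_\varphi\to\T$ dual to the orbit map $g\mapsto h(g)z$, exhibiting $(G,G_\varphi)$ as an extension of $(G,\T)$ that is one-to-one off the countable $G$-orbit of the finite set of arc endpoints. Under suitable genericity of the endpoints' orbits and of $z$ --- precisely the content of the hedge \emph{``in many cases''} --- this $\pi$ is almost one-to-one, whence $G_\varphi$ is minimal. The circular order of $\T$ then lifts through $\pi$ by splitting each endpoint orbit point into the one-sided copies recorded by the symbolic coordinates, exactly as in the classical Sturmian splitting, producing a compact circularly ordered space on which $G$ acts by c-order preserving maps, i.e. $(G,G_\varphi)\in\mathrm{CODS}$. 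The hard part is this last step: one must verify that splitting along the \emph{moving} arc endpoints still yields a genuine compact circularly ordered space and that the induced $G$-action preserves the circular order, while simultaneously isolating the genericity hypotheses needed for minimality. By contrast, once the bounded variation observation and Lemma~\ref{l:tametype} are in hand, (1) and (2) are essentially bookkeeping.
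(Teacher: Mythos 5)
Your proposal follows essentially the same route as the paper's proof: bounded total variation of the orbit $f\,h(G)$ under orientation-preserving homeomorphisms, tameness of bounded-variation families on a circularly ordered set (Example~\ref{ex:tame}.6, i.e.\ \cite{Me-Helly,GM-c}), and then Lemma~\ref{l:tametype} to obtain the tame coding function and the tame subshift, which is exactly how the paper handles (1) and (2). For (3) the paper itself only defers to results of \cite{GM-c}, so your Sturmian-splitting sketch, with its explicit genericity caveats, is a faithful (and no less complete) counterpart.
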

	 \begin{proof}(A sketch)
	 % We sketch a proof. 
	 The coloring map $f: \T \to A:=\{0, \dots ,d\}$ has bounded variation. Every circle homeomorphism $g \in \H_+(\T)$ is 
	 %either circular order preserving or reversing. 
	 circular order preserving. 
	 This implies that 
	 the orbit $fG=\{fg: g \in G\}$, as a bounded family of real (discontinuous) functions on $\T$, 
	 has bounded total variation.  	
	As we know by \cite{Me-Helly, GM-c} any such family on $\T$ (or, on any other circularly ordered set)  is tame. 
	From Lemma \ref{l:tametype}.4 we conclude that $\varphi=m(f,z)$ is a tame function. This yields (1) and (2). %!!
	For (3) we use some results from \cite{GM-c}. 
	\end{proof} 
	
	\begin{remark} \label{r:nonc}  
	Some particular cases of this construction (for a suitable $h$) are as follows:  
	\ben 
	\item Sturmian and Sturmian like multidimensional symbolic $\Z^k$-systems. % (Example \ref{ex:multiSturm}). 
	\item Consider a subgroup $G$ of $\PSL_2(\R)$, isomorphic to $F_2$,
	which is generated by two M\"{o}bius transformations as in \cite{GM-fp}, say an irrational rotation and 
	a parabolic transformation. 
	When $d=1$ (two colors) 
	we get the corresponding minimal tame subshift $X \subset \{0,1\}^{F_2}$. 
	\item 
One can consider coding functions on any Fuchsian group $G$.	
E.g., for the noncommutative modular group $G=\PSL_2(\Z) \simeq \Z_2 \ast \Z_3$. 
%!!! 
	\item More generally, at least in the assertions (1) and (2) of Theorem \ref{t:multi}, one may replace 
	the circle $\T$ by any circularly ordered set $X$ with circular order preserving $f: X \to A$. 
%	It would be interesting also to study tame codings for some Fuchsian subgroups. For example on the subgroup $\PSL_2(\Z) \simeq \Z_2 \ast \Z_3$ which also gives minimal tame symbolic $\PSL_2(\Z)$-systems, some tame subshifts of $\{0, \dots ,d\}^{\PSL_2(\Z)}$. 
%	\footnote{Which "Todorcevic class of tameness" we have here ?} 
%	Note that coding via actions of $\PSL_2(\Z)$ is a well known classic topic (see \cite{Series}) coming back from Artin. 
	\een 
	\end{remark}

	 \sk 
\section{Tame minimal systems and topological groups} 

\label{sec,int}

Recall that for every topological group $G$ there exists a
unique universal minimal $G$-system $M(G)$.
%It can be defined as a minimal left ideal in the $G$-space $\beta_G G$, where $\beta_G: G \to \beta_G G=G^{\RUC}$ be the greatest $G$-compactification of $G$ (defined by the algebra $\RUC(G)$). 
Frequently 
 $M(G)$ is nonmetrizable. For example, this is the case for every locally compact noncompact $G$. On the other hand,
many interesting massive Polish groups are extremely amenable that is, having trivial $M(G)$.
See for example \cite{Pest98, PestBook, UspComp,van-the2}.
The first example of a nontrivial yet metrizable $M(G)$ was found by Pestov.
In \cite{Pest98} he shows that for $G:=\H_+(\T)$ the universal minimal system $M(G)$
can be identified with the natural action of $G$ on the circle $\T$.
Glasner and Weiss \cite{GWsym,GW-Cantor} gave an explicite description
of $M(G)$ for the symmetric group $S_{\infty}$ and for $\H(C)$
(the Polish group of homeomorphisms of the Cantor set $C$).
%and show that $M(G)$ are metrizable and admit a nice description
%($M(S_{\infty})$ can be identified with the space of linear orderings on $\N$).
Using model theory Kechris, Pestov and Todor\u{c}evi\'{c} gave in \cite{KPT} many new
examples of various subgroups of $S_{\infty}$ with metrizable (and computable) $M(G)$.

\sk
 
Note that the universal almost periodic factor $M_{AP}(G)$ of $M(G)$ is the Bohr compactification $b(G)$ of $G$. 
When the induced homomorphism $G \to \Homeo b(G)$ is injective (trivial) 
the topological group $G$ is called maximally (resp., minimally) almost periodic.  
Every topological group $G$ has a universal minimal tame system 
$M_t(G)$ which is the largest tame $G$-factor of $M(G)$. It is not necessarily AP (in contrast to the HNS and WAP cases). 
%However, the case of $\K$ = tame systems is full of life and seemingly there are several interesting directions. 
 There are (even discrete) minimally almost periodic groups which however 
 admit effective minimal tame systems, or 
in other words, groups for which
 the corresponding homomorphism $G \to \Homeo (M_t(G))$ is injective. 
 For example, the countable group $\PSL_2(\Q)$ is minimally almost periodic (von Neumann and Wigner) 
 even in its discrete topology.  
It embeds densely into the group $\PSL_2(\R)$ which  
acts effectively and transitively on the circle. 
 Thus, the circle provides a topologically effective minimal action for every dense subgroup $G$ of $\PSL_2(\R)$. 
 In particular, it is effective (though not topologically effective) for the discrete copy of $\PSL_2(\Q)$). 

\begin{question} \label{q:tMIN} 
	Which Polish groups (e.g., discrete countable groups) $G$ have effective tame minimal actions~?
	Equivalently, when is the homomorphism $G \to \Homeo (M_t(G))$ injective? 

%	When is $G \to \Homeo (M_t(G))$ a topological embedding ? 
	
%	When is $M_t(G)$ trivial ?  
\end{question}

%This seems to be widely open even for the discrete case. 
%Probably (circularly) ordered groups play a major role in this theory. 

Next we will discuss in more details the question ``when is $M(G)$ tame ?".
% i.e. when do we have $M_t(G) = M(G)$ ? 

\begin{defin} \label{d:int-tame} 
	We say that a topological group $G$ is \emph{intrinsically tame} 
	% (or, \emph{int-tame}, for short)
	if one of the following equivalent conditions is satisfied:
	
	\ben 
		\item every continuous action of $G$ on a compact space $X$ admits a $G$-subsystem $Y \subset X$ which is tame. 
		\item any minimal compact $G$-system is tame.
	\item the universal minimal $G$-system $M(G)$ is tame.
	\item the natural projection $M(G) \to M_t(G)$ is an isomorphism. 
	\een 
\end{defin}
 
%dec6 Eli, Tsankov gave a complete (countable) list of all minimal $G$-systems of $G:=S_{\infty}$ do you know which one is $M_t(G)$ ?  
% %
%1612 the following remark 
%(for general properties which stable under subdirect products) 
%was a part of our temporary file named something like: intrinsic??  
%By the way Lionel asked if $M_t(G)$ can be computed in this way 
%The construction of $M_t(G)$ can be localized similar to the case of $M(G)$. 
%Indeed, $M_t(G)$ can be identified with a minimal left ideal $G$-space in the Gelfand space 
%$G^{\Tame}$, where $G \to G^{\Tame}$ is the compactification defined
%by the algebra $\Tame(G) \subset \RUC(G)$.

The $G$-space $M_t(G)$ can also be described as a minimal left ideal in the 
universal tame $G$-system $G^{\Tame}$. 
Recall that $G \to G^{\Tame}$ is a semigroup $G$-compactification 
determined by the algebra $\Tame(G)$. 
The latter is isomorphic to its own enveloping semigroup
and thus has a structure of a compact right topological semigroup. Moreover, any
two minimal left ideals there, are isomorphic as dynamical systems.

In \cite{GM1} we defined, for a topological group $G$ and
a dynamical property $P$, the notion of $P$-fpp
($P$ fixed point property). Namely $G$ has the $P$-fpp if every $G$-system which
has the property $P$ admits a $G$ fixed point.
Clearly this is the same as demanding that every minimal $G$-system with the property
$P$ be trivial. Thus for $P=\Tame$ a group $G$ has the tame-fpp iff $M_t(G)$ is trivial.

%mm!!! SUC

\sk 
We will need the following
theorem which extends a result in \cite{Gl-str}.

\begin{thm}\label{pd}
Let $(G,X)$ be a metrizable minimal tame dynamical system and suppose
it admits an invariant probability measure. Then $(G,X)$ is point distal.
If moreover, with respect to $\mu$ the system $(G,\mu,X)$ is weakly
mixing then it is a trivial one point system.
\end{thm}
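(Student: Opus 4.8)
The plan is to reduce both assertions to the structure theory of tame minimal systems, combining Theorem~\ref{almost-auto} with a measure-theoretic analysis of the enveloping semigroup. Recall that a point $x$ is \emph{distal} if it is proximal only to itself, so $(G,X)$ is point distal as soon as we exhibit one distal point; I will in fact aim to show that $\mu$-almost every point is distal.

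First I would record the key feature of the invariant measure under tameness: \emph{every} $p\in E(X)$ preserves $\mu$. Indeed, by the dynamical BFT dichotomy (Theorem~\ref{D-BFT}) the enveloping semigroup of a metric tame system is Fr\'echet, so each $p\in E(X)$ is a pointwise limit $p=\lim_n g_n$ of a sequence from $G$. For $f\in C(X)$ the functions $f\circ g_n$ are uniformly bounded and converge pointwise to $f\circ p$, so bounded convergence together with the $G$-invariance of $\mu$ gives
$$
\int f\circ p\,d\mu=\lim_n\int f\circ g_n\,d\mu=\int f\,d\mu,
$$
whence $p_*\mu=\mu$. Now apply this to a minimal idempotent $u\in E(X)$ (which exists by classical Ellis theory and, by tameness, is a Baire class~$1$, hence Borel, self-map of $X$; see the remarks following Theorem~\ref{D-BFT}). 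Since $u^2=u$, the fixed-point set coincides with the image, $A_u:=\{x:ux=x\}=u(X)$, and it is Borel, being the preimage of the diagonal under the Borel map $x\mapsto(x,ux)$. Because $u(ux)=ux$ for all $x$ we have $u^{-1}(A_u)=X$, and therefore $\mu(A_u)=\mu(u^{-1}A_u)=1$: \emph{each minimal idempotent fixes $\mu$-almost every point}. By Ellis--Auslander theory a point of a minimal flow is distal precisely when it is fixed by every minimal idempotent, so point distality amounts to producing a point lying in the intersection of the conull sets $A_u$.

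The main obstacle is precisely this passage from ``each minimal idempotent fixes a.e.\ point'' to a point fixed \emph{simultaneously} by all minimal idempotents, since a priori this is an intersection of uncountably many conull sets. Here I would invoke the structure theorem: when $G$ is abelian, Theorem~\ref{almost-auto}.1 presents $(G,X)$ as an almost one-to-one extension $\pi:X\to Y$ of an equicontinuous, hence distal, system $Y$. If $x$ lies in the residual set $X_0=\{x:|\pi^{-1}\pi(x)|=1\}$ and $(x,y)$ is proximal, then $(\pi x,\pi y)$ is a proximal pair in the distal system $Y$, forcing $\pi y=\pi x$ and hence $y=x$; thus every point of $X_0$ is distal and $(G,X)$ is point distal. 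Moreover, by Theorem~\ref{almost-auto}.2 the map $\pi$ is a measure isomorphism onto $(Y,\mathrm{Haar})$, so $X\setminus X_0$ is $\mu$-null and a.e.\ point is distal, consistent with the idempotent computation above. For general $G$ the same conclusion should follow from the enveloping-semigroup analysis once one controls the family of minimal idempotents (for instance using that proximality is countably determined, as $j(G)$ is separable in $C(X,X)$, combined with an ergodic decomposition of $\mu$); this control is the technical heart of the argument.

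For the second assertion I would argue that weak mixing is incompatible with the isometric structure just exhibited. By Theorem~\ref{almost-auto}.2 the measure-preserving system $(G,\mu,X)$ is isomorphic to the minimal equicontinuous system $(Y,\mathrm{Haar})$, which has pure point spectrum. A system with pure point spectrum that is weakly mixing has no nonconstant eigenfunctions, so $L^2(Y,\mathrm{Haar})$ reduces to the constants and $Y$ must be a single point. Since $\pi:X\to Y$ is almost one-to-one, a one-point base forces the residual set $X_0=\{x:|\pi^{-1}\pi(x)|=1\}$ to equal $\{x:|X|=1\}$ and to be nonempty, i.e.\ $X$ itself is a single point. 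The only genuinely delicate ingredient is again the input from Theorem~\ref{almost-auto} (equivalently, that tameness together with an invariant measure yields an isometric measurable structure); granting it, the spectral step is routine.
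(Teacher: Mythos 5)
Your first step --- every $p \in E(X)$ preserves $\mu$ (via the Fr\'echet property and bounded convergence), hence every minimal idempotent $u$ satisfies $\mu(uX) = \mu(\{x : ux=x\}) = 1$ --- is correct, and it is essentially the content of \cite[Proposition 4.3]{Gl-str} that the paper cites. The problem is what comes next. You correctly identify the main obstacle, namely passing from ``each minimal idempotent fixes $\mu$-a.e.\ point'' to ``some point is fixed by \emph{all} minimal idempotents simultaneously,'' but you then resolve it only for abelian $G$, by invoking Theorem \ref{almost-auto}, which is stated (and proved in the cited literature) only for abelian acting groups. For general $G$ you write that the conclusion ``should follow \ldots this control is the technical heart of the argument,'' which is an acknowledgment of a gap, not a proof. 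The theorem carries no commutativity hypothesis, and the paper genuinely needs the general case: Theorem \ref{pd} is applied in the proof of Theorem \ref{t:intrinsic}.4 to a minimal model of a Bernoulli system over an \emph{arbitrary} infinite discrete group, to conclude that intrinsically tame discrete groups are finite; an abelian-only proof cannot serve that purpose. The same defect affects your second assertion, whose spectral argument again runs through Theorem \ref{almost-auto}.2.

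The paper closes this gap without any structure theorem. For a minimal idempotent $v$, the set $C_v$ of continuity points of $v$ restricted to $\ov{vX}$ is a dense $G_\delta$ subset of $\ov{vX}$ and is contained in $vX$ (\cite[Lemma 4.2(ii)]{Gl-str}); since $\mu(vX)=1$ and $\mu$ has full support by minimality, $\ov{vX}=X$, and point distality then follows as in \cite[Proposition 4.4]{Gl-str}, with no hypothesis on $G$. For the weak mixing statement the paper argues: measure-theoretic weak mixing plus full support gives topological weak mixing, and if $X$ were nontrivial, the Veech--Ellis structure theorem for point distal systems would produce a nontrivial equicontinuous factor, which, being a factor of a topologically weakly mixing system, would itself be topologically weakly mixing --- a contradiction. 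To salvage your approach you would have to replace Theorem \ref{almost-auto} by its generalization to arbitrary groups \cite{Gl-17}, or carry out the continuity-point analysis of minimal idempotents directly; as written, your proposal proves a strictly weaker statement than the one claimed.
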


\begin{proof}
%2503 Eli, do you intend to keep this footnote (as a footnote) ?
%\footnote{It seems that this observation, namely that the existence of
%an invariant measure can replace the assumption that $G$ is abelian
%in proving point distally, can be pushed to a proof of the full statement
%of Theorem \cite[Proposition 5.1]{Gl-str} (modulo some obvious modifications) under the assumption that $X$
%supports an invariant measure.}
With notations as in \cite{Gl-str} we observe that for any minimal
idempotent $v \in E(G,X)$ the set $C_v$ of continuity points of $v$
restricted to the set $\ov{vX}$, is a dense $G_\del$ subset of $\ov{vX}$
and moreover $C_v \subset vX$ (\cite[Lemma 4.2.(ii)]{Gl-str}).
Also, by \cite[Proposition 4.3]{Gl-str} we have $\mu(vX)=1$, and it follows
that $\ov{vX}=X$. The proof of the claim that $(G,X)$ is point distal
is now finished as in \cite[Proposition 4.4]{Gl-str}.

Finally, if 
the measure preserving system $(G,\mu,X)$ is weakly
mixing it follows that it is also topologically weakly mixing. By the
Veech-Ellis structure theorem for point distal systems
\cite{V, E}, if $(G,X)$ is nontrivial it admits a nontrivial equicontinuous
factor, say $(G,Y)$. However $(G,Y)$, being a factor of $(G,X)$, is
at the same time also topologically weakly mixing which is a contradiction.
\end{proof}

%\begin{remark}
%It seems that this observation, namely that the existence of
%an invariant measure can replace the assumption that $G$ is abelian
%in proving point distally, can be pushed to a proof of the full statement
%of Proposition 5.1 in \cite{Gl-str} (modulo some obvious modifications) under the assumption that $X$
%supports an invariant measure.
%\end{remark}

%1612 Eli, as you (and Todor) mentioned $M_t(G)$ is trivial for
%(4) G= the symmetric group (that is, G has fixed point property on tame systems); 
% what is $M_t(G)$ for:
%(3) G=discrete group (maybe its Borh compactification ?)
%(5) G=\H(C) ? (again one-point system ?)
%is it OK to touch "Lionel's case" G=Aut (S(2)) ? 

\begin{thm} \label{t:intrinsic} \
\begin{enumerate}
\item
Every extremely amenable group is intrinsically tame.
\item
The Polish group $\H_+(\T)$ of orientation preserving homeomorphisms of the circle is intrinsically tame.
%!!!    \footnote{In fact, $\H_+(\T)$ even is "intrinsically c-ordered".}  
% (but not extremely amenable).
\item
The Polish groups $\Aut(\mathbf{S}(2))$ and 
$\Aut(\mathbf{S}(3))$, of automorphisms 
of the circular directed graphs $\mathbf{S}(2)$ and $\mathbf{S}(3)$, are intrinsically tame.
\item
A discrete group which is intrinsically tame is finite. 
\item
For an abelian infinite countable discrete group $G$, its universal
minimal tame system $M_t(G)$ is a highly proximal extension of
its Bohr compactification $G^{AP}$ (see e.g. \cite{Gl-str}). 
\item
The Polish group $\H(C)$, of homeomorphisms of the Cantor set,
is not intrinsically tame.
\item
The Polish group $G=S_{\infty}$, of permutations of the natural numbers,
is not intrinsically tame. In fact $M_t(G)$ is trivial; i.e. $G$ has the tame-fpp.
% every minimal tame $G$-system is trivial.
%\footnote{Eli, do you know (nontrivial) discrete group $G$ with trivial $M_t(G)$ ? 
%Or at least with non-effective $M_t(G)$ ? (that is, the action of $G$ on $M_t(G)$ is non-effective)}
\end{enumerate}
\end{thm}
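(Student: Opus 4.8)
The plan is to treat the seven assertions in three groups according to the technique involved.

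For the positive structural results (1)--(3) I would argue directly from condition (3) of Definition \ref{d:int-tame}, namely that $M(G)$ be tame, together with Theorem \ref{t:CoisWRN}. Assertion (1) is immediate: extreme amenability means $M(G)$ is a one-point system, which is trivially tame. For (2) I would invoke Pestov's theorem \cite{Pest98}, which identifies $M(\H_+(\T))$ with the tautological action of $\H_+(\T)$ on the circle $\T$; since $\T$ is a circularly ordered compactum and $\H_+(\T)$ acts by circular-order preserving homeomorphisms, the pair lies in CODS, so by Theorem \ref{t:CoisWRN}.1 it is Rosenthal representable, hence tame. For (3) the same mechanism applies once one knows, via the KPT correspondence \cite{KPT} in its circular-order form, that the universal minimal flows of $\Aut(\mathbf{S}(2))$ and $\Aut(\mathbf{S}(3))$ can be realized as the compact spaces of circular orders compatible with the respective structures; these are again circularly ordered $G$-systems, to which Theorem \ref{t:CoisWRN} applies.

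For (4) and (5) I would use the structure theory of tame minimal systems. For (4), suppose $G$ is infinite discrete and, for contradiction, intrinsically tame, so by condition (2) every minimal $G$-system is tame; it then suffices to exhibit one non-tame minimal $G$-system. I would produce a minimal subshift $X\subseteq\{0,1\}^G$ which is free on an infinite subset $A\subseteq G$ (for amenable $G$ one may take a minimal subshift of positive entropy, which is free on a set of positive density) and deduce non-tameness from the Kerr--Li independence criterion, the group analogue of Theorem \ref{t:3}.2. For (5) I would note that the maximal equicontinuous factor of $M_t(G)$ is the universal minimal equicontinuous system, i.e. the Bohr compactification $G^{AP}$, and then invoke the point-distal structure theory of \cite{Gl-str}: for $G$ abelian a tame minimal system carrying an invariant measure is point distal, and in the universal setting this forces $M_t(G)\to G^{AP}$ to be a highly proximal extension, the non-metric analogue of the almost 1-1 extension furnished by Theorem \ref{almost-auto}.

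The negative results (6) and (7) are where the work lies. For (6) I would use the Glasner--Weiss description \cite{GW-Cantor} of $M(\H(C))$ as the space of maximal chains of clopen subsets of the Cantor set and exhibit inside it an explicit independent sequence of continuous functions, so that $M(\H(C))$ is non-tame. For (7) I would first show that $M(S_\infty)$ is non-tame using the Glasner--Weiss identification \cite{GWsym} of $M(S_\infty)$ with the space $LO$ of linear orders on $\N$: the continuous function $f$ given by $f(R)=1$ when $1$ precedes $2$ in the order $R$ and $f(R)=0$ otherwise has an independent orbit, since for any disjoint finite $P,M\subseteq S_\infty$ one can choose a single linear order realizing any prescribed pattern of the pairs queried by the corresponding translates of $f$. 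This already proves that $S_\infty$ is not intrinsically tame. The stronger claim that $M_t(S_\infty)$ is trivial I would obtain as follows: $S_\infty$ is amenable (its universal minimal flow $LO$ carries a uniform invariant measure), so every minimal $S_\infty$-system admits an invariant measure; passing to a nontrivial metrizable tame minimal factor (if one existed) and applying Theorem \ref{pd} would make it point distal, whence by the Veech--Ellis structure theorem \cite{V,E} it would have a nontrivial equicontinuous factor; but the Bohr compactification of $S_\infty$ is trivial, so its only equicontinuous minimal system is a point, a contradiction. Hence $M_t(S_\infty)$ is trivial and $S_\infty$ has the tame-fpp.

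The main obstacle I anticipate is twofold. On the positive side, carrying out (3) rigorously requires the careful identification of the universal minimal flows of $\Aut(\mathbf{S}(2))$ and $\Aut(\mathbf{S}(3))$ as circularly ordered spaces, which is the genuinely new input. On the negative side, the hardest step is the triviality of $M_t(S_\infty)$ in (7): one must simultaneously use amenability, reduce to the metrizable case so that Theorem \ref{pd} applies, invoke point-distal structure theory, and exploit the vanishing of the Bohr compactification of $S_\infty$. The delicate point is the metrizable reduction, ensuring that a putative nontrivial $M_t(S_\infty)$ yields a nontrivial metrizable tame minimal system to which these tools genuinely apply.
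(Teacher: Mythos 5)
Your items (1) and (2) coincide with the paper's proof. For (3) your mechanism differs slightly: the paper quotes the computation of $M(G)$ for $\Aut(\mathbf{S}(2))$ and $\Aut(\mathbf{S}(3))$ from \cite{van-the} and then gets tameness from a cardinality count --- every element of $E(M(G))$ is an order preserving map, so $\card E(M(G)) \leq 2^{\aleph_0}$, which for a metrizable system yields tameness via the dichotomy of Theorem \ref{D-BFT} --- whereas you want to recognize $M(G)$ as a circularly ordered $G$-system and apply Theorem \ref{t:CoisWRN}. That is a viable variant, but the input you attribute to \cite{KPT} is really the computation in \cite{van-the}, and you would still have to verify that the flows produced there are c-ordered $G$-systems rather than merely spaces of circular orders.

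The serious gap is in (4). You propose to exhibit, for every infinite discrete $G$, a minimal subshift free on an infinite set, but your recipe (a positive-entropy minimal subshift, free on a set of positive density) only makes sense for amenable $G$, and you offer nothing in the non-amenable case; producing a non-tame \emph{minimal} action of an arbitrary infinite discrete group is precisely the hard part. The paper argues in the reverse direction and never needs such a construction: assuming $G$ infinite and intrinsically tame, Weiss' theorem \cite{W} provides a minimal topological model $(G,X,\mu)$ of the Bernoulli measure preserving system; this $X$ is metrizable, minimal, tame (by intrinsic tameness), and carries an invariant measure with respect to which it is weakly mixing, so Theorem \ref{pd} forces $X$ to be a single point, contradicting nontriviality of the Bernoulli system. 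A related weakness occurs in (5): your phrase ``in the universal setting this forces'' conceals the actual argument. The paper uses countability of $G$ to realize $M_t(G)$ as a minimal sub-product of metrizable tame minimal systems, each of which, by the structure theorem, is an almost 1-1 extension of its equicontinuous factor; this structure passes to the sub-product and yields a highly proximal extension of $G^{AP}$. Invariant measures and point distality by themselves do not give the highly proximal conclusion.

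Concerning (6) and (7): in (6) your plan to locate an independent sequence inside the Glasner--Weiss space of maximal chains \cite{GW-Cantor} is left unexecuted and is more work than needed; the paper simply notes that the tautological minimal action $(G,C)$ is a factor of $M(G)$ and is not tame, since the cascade generated by the shift homeomorphism of $C \cong \{0,1\}^{\Z}$ has enveloping semigroup homeomorphic to $\beta\N$. In (7) your explicit independence argument on $LO(\N)$ is correct, and it is a genuinely more elementary proof of non-tameness than the paper's, which combines the full-support invariant measure, the two-point range of minimal idempotents, the sequential criterion for tame metric systems, and Lebesgue's theorem. However, your proof that $M_t(S_\infty)$ is trivial leans on the unproved assertion that the Bohr compactification of $S_\infty$ is trivial; this is true but is itself a nontrivial fact (it needs, e.g., Gaughan's minimality of the Polish topology on $S_\infty$ together with the Schreier--Ulam--Baer classification of its normal subgroups, or an equivalent argument). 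The paper sidesteps this entirely: the full-support invariant measure and the two-point range of minimal idempotents are both inherited by factors of $LO(\N)$, so the same integration argument rules out every nontrivial tame factor of $M(G)$, giving triviality of $M_t(S_\infty)$ directly.
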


\begin{proof}
(1) Is trivial. 

(2) Follows from Pestov's theorem \cite{Pest98}, which
identifies $(G,M(G))$ for $G = \H_+(\T)$ as the tautological action of $G$ on $\T$, and from Theorem \ref{t:CoisWRN} which asserts that this 
system is tame (being c-ordered). 

(3) 
The universal minimal $G$-systems for the groups $\Aut(\mathbf{S}(2))$ and 
$\Aut(\mathbf{S}(3))$ are computed in \cite{van-the}.
In both cases it is easy to check that every element of the enveloping semigroup
$E(M(G))$ is an order preserving map. As there are only $2^{\aleph_0}$ order
preserving maps, it follows that the cardinality of $E(M(G))$ is
$2^{\aleph_0}$, whence, in both cases, the dynamical system $(G, M(G))$ is tame.

In order to prove Claim (4) we assume, to the contrary, that $G$ is infinite
and apply a result of B. Weiss  \cite{W}, to obtain
a minimal model, say $(G,X,\mu)$, of the
Bernoulli probability measure preserving
system $(G,\{0,1\}^G,  (\frac12 (\del_0 + \del_1))^G)$.
Now $(G,X,\mu)$ is metrizable, minimal and tame, and it carries
a $G$-invariant probability measure with respect to which the system is weakly mixing.
Applying Theorem \ref{pd} we conclude that $X$ is trivial.
This contradiction finishes the proof. 
%m2004 what is the status of the next footnote ? 
\footnote{Modulo an extension of Weiss' theorem, which does not yet exist, 
a similar idea would work for any locally compact group.
% (replacing the Bernoulli system with a Poisson (or Gauss) system).
The more general statement would be: A locally compact group which is
intrinsically tame is compact.}

(5) 
In \cite{H}, \cite{KL} and \cite{Gl-str} it is shown that a metric minimal tame $G$-system
is an almost one-to-one extension of an equicontinuous system.
%(Note that not every minimal almost one-to-one extension of
%a minimal equicontinuous $G$-system is tame,
%such systems e.g. can have positive topological entropy.)
%Of course every minimal equicontinuous $G$-system is tame. 
Now tameness is preserved under sub-products, and because our group $G$ is
countable, it follows that $M_t(G)$ is a minimal sub-product of all the
minimal tame metrizable systems. In turn this implies that $M_t(G)$ is
a (non-metrizable)  highly proximal extension of the Bohr compactification
$G^{AP}$ of $G$.

(6) 
To see that $G = \H(C)$ is not intrinsically tame 
it suffices to show that the tautological action $(G,C)$, which is a factor of $M(G)$, 
is not tame. To that end note that the shift transformation
$\sig$ on $X = \{0,1\}^\Z$ is a homeomorphism of the
Cantor set. Now the enveloping semigroup $E(\sig,X)$ of the cascade
$(\sig,X)$, a subset of $E(G,X)$, is homeomorphic to $\beta\N$.

(7) 
To see that $G = S_{\infty}$ is not intrinsically tame we recall first that,
by \cite{GW}, the universal minimal dynamical system for this group
can be identified with the natural action of $G$ on the compact metric
space $X = LO(\N)$ of linear orders on $\N$. Also, it follows from the
analysis of this dynamical system that for any minimal idempotent
$u \in E(G,X)$ the image of $u$ contains exactly two points,
say $uX = \{x_1,x_2\}$.
A final fact that we will need concerning the system $(G,X)$ is that
it carries a $G$-invariant probability measure $\mu$ of full support \cite{GW}.
Now to finish the proof, suppose that $(G,X)$ is tame. Then there
is a {\bf sequence} $g_n \in G$ such that $g_n \to u$ in $E(G,X)$.
If $f \in C(X)$ is any continuos real valued function, then we have,
for each $x \in X$,
$$
\lim_{n \to \infty} f(g_n x) = f(ux) \in \{f(x_1), f(x_2)\}.
$$
But then, choosing a function $f \in C(X)$ which vanishes 
at the points\ $x_1$ and $x_2$ and with $\int f\,d\mu=1$, we get, 
by Lebesgue's theorem,
$$
1 = \int f \, d\mu = \lim_{n\to \infty} \int f(g_nx) \, d\mu =
\int f (ux) \, d\mu =0.
$$

Finally, the property of supporting an invariant measure, as well as the 
fact that the cardinality of the range of minimal idempotents is
$\le 2$, are inherited by factors and thus the same argument shows that $M(G)$
admits no nontrivial tame factor.
Thus $M_t(G)$ is trivial.  
\end{proof}

We will say that $G$ is \emph{intrinsically c-ordered} 
if the $G$-system $M(G)$ is circularly ordered.  
Using this terminology Theorem \ref{t:intrinsic} says that the Polish groups $G=\H_+(\T)$, $\Aut(\mathbf{S}(2))$ and 
$\Aut(\mathbf{S}(3))$ are intrinsically c-ordered. 
% Also, every $G$ which has tame-fpp (like $S_{\infty}$) is trivially intrinsically c-ordered. 
%280817
Note that for $G=\H_+(\T)$ every compact $G$-space $X$ contains a copy of $\T$ as  $G$-subspace or a $G$-fixed point. 

\vspace{0.3cm}

The (nonamenable) group $G=\H_+(\T)$ has one more remarkable property. Besides $M(G)$,
one can also effectively compute the affine analogue of $M(G)$.
%which, in a sense, estimates how much non-amenable is $G$.
Namely, the \emph{universal irreducible affine system} of $G$ (we denote it by $I\!A(G)$) which was defined
and studied in \cite{Gl-thesis, Gl-book1}.
It is uniquely determined up to affine isomorphisms.  
For any topological group $G$ 
the corresponding affine compactification $G \to I\!A(G)$ coincides with the
affine compactification $G \to P(M_{sp}(G))$, where, $M_{sp}(G)$ is the 
\emph{universal strongly proximal minimal system} of $G$
and $P(M_{sp}(G))$ is the space of probability measures on the compact
space $M_{sp}(G)$. 
For more information regarding affine compactifications of dynamical systems we refer to \cite{GM-AffComp}. 

%280817  I think this form of the definition is more convenient (compare Definition 8.2)
\begin{defin} \label{d:ConvIntTame}
We say that $G$ is \emph{convexly intrinsically tame} 
if one of the following equivalent conditions is satisfied:

\ben 
\item every compact affine dynamical system
$(G,Q)$ admits an affine tame $G$-subsystem. 
\item every compact affine dynamical system
$(G,Q)$ admits a tame $G$-subsystem. 
\item every irreducible affine $G$-system is tame.
\item the universal irreducible affine $G$-system $I\!A(G)$ is tame.  
\een 
\end{defin}

Note that the $G$-system $P(X)$ is affinely universal for a $G$-system $X$; also, 
$P(X)$ is tame whenever $X$ is \cite[Theorem 6.11]{GM-rose}, \cite{GM-AffComp}. 
In particular, it follows that any intrinsically tame group is convexly intrinsically tame.

It is well known that a topological group $G$ is amenable iff $M_{sp}(G)$ is trivial
(see \cite{Gl-book1}). Thus
$G$ is amenable iff $I\!A(G)$ is trivial and it follows that every amenable group is trivially convexly intrinsically tame.

 Thus we have the following diagram which
emphasizes the analogy between the two pairs of properties:
\begin{equation*}
\xymatrix
{
 \text{extreme amenability}\  \ar@2{->} [d] \ar@2{->}[r]\  &  \  \text{intrinsically tame} \ar@2{->}[d]\  \\
 \text{amenability}\  \ar@2{->}[r] &  \  \text{convexly intrinsically tame}
}
\end{equation*}

\begin{remark} \label{r:collapsing} 
Given a class $P$ of compact $G$-systems which is stable under subdirect products, one 
can define the notions of an intrinsically $P$ group and a convexly intrinsically 
$P$ group in a manner analogous to the one we adopted for $P=\Tame$.
We then note that 
in this terminology a group is convexly intrinsically HNS (and, hence, also conv-int-WAP) iff it is amenable. 
This follows easily from the fact that the algebra $\Asp(G)$ is left amenable, \cite{GM-fp}. 
This ``collapsing effect" together with the special role of tameness in 
the dynamical BFT dichotomy \ref{D-BFT} suggest that the 
%050112 notions of intrinsic tameness and con-int-tameness are 
% natural analogues of extreme amenability and amenability, respectively. 
notion of convex intrinsic tameness is a natural analogue of amenability. 
\end{remark}

At least for discrete groups, if $G$ is intrinsically HNS then it is finite.
In fact, for any group, an HNS minimal system is equicontinuous 
(see \cite{GM1}), so that
for a group $G$ which is intrinsically HNS the universal minimal system $M(G)$ coincides with its Bohr compactification $G^{AP}$.
Now for a discrete group, it is not hard to show that an infinite minimal equicontinuous 
system admits a nontrivial almost one to one (hence proximal) 
extension which is still minimal. 
Thus $M(G)$ must be finite.
%050115 why $M(G)$ must be finite ?  (discuss in skype)  
However, by a theorem of Ellis \cite{Ellis}, for discrete groups the group $G$ acts freely on $M(G)$,
so that $G$ must be finite as claimed. Probably similar arguments will show that 
a locally compact intrinsically HNS group is necessarily compact.

\begin{thm} \label{t:conv-int-tame} \
	\begin{enumerate}
		\item $S_{\infty}$
		is amenable (hence convexly intrinsically tame) but not intrinsically tame. 
		\item
		$\H(C)$ 
		is not convexly intrinsically tame.
		\item $\H([0,1]^{\N})$ is not convexly intrinsically tame.
		
		\item $\H_+(\T)$ 
		is a (convexly) intrinsically tame nonamenable topological group. 
		
		\item $SL_n(\R)$, $n >1$ 
		(more generally, any semisimple Lie group $G$ with finite center and no compact factors) is convexly intrinsically tame nonamenable topological group. 
	%	(that is, with tame $I\!A(G)$). 
		
		%\footnote{Eli, do you know (nontrivial) discrete group $G$ with trivial $M_t(G)$ ? 
		%Or at least with non-effective $M_t(G)$ ? (that is, the action of $G$ on $M_t(G)$ is non-effective)}
	\end{enumerate}
\end{thm}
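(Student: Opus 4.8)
The plan is to read all five assertions through the equivalences of Definitions \ref{d:int-tame} and \ref{d:ConvIntTame}, using the identification $I\!A(G)=P(M_{sp}(G))$, the facts that tameness is inherited by subsystems, products and factors and that $P(X)$ is tame whenever $X$ is, and the criterion ``$G$ amenable $\iff M_{sp}(G)$ trivial''. Two items are immediate. For (1), the proof of Theorem \ref{t:intrinsic}.7 records that $M(S_\infty)=LO(\N)$ supports a $G$-invariant probability measure; since $M(G)$ is the universal minimal flow and factor maps push forward invariant measures, every minimal $S_\infty$-flow, hence (via a minimal subflow) every $S_\infty$-flow, carries an invariant measure, so $S_\infty$ is amenable and therefore convexly intrinsically tame, while Theorem \ref{t:intrinsic}.7 itself says it is not intrinsically tame. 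For (4), $\H_+(\T)$ is intrinsically tame by Theorem \ref{t:intrinsic}.2, hence convexly intrinsically tame; its nonamenability follows since the tautological flow $(\H_+(\T),\T)$ is minimal (Pestov) and strongly proximal — a large-measure complementary arc of a non-atomic measure can be squeezed by an orientation preserving homeomorphism into an arbitrarily short arc, so orbit closures in $P(\T)$ meet the point masses — whence $M_{sp}(\H_+(\T))$ is nontrivial.

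For (2) and (3) I would prove that the tautological action $(G,X)$, with $X=C$ the Cantor set when $G=\H(C)$ and $X=[0,1]^\N$ the Hilbert cube when $G=\H([0,1]^\N)$, is minimal, strongly proximal and not tame. Minimality is the homogeneity of $X$, which makes $G$ act transitively. For strong proximality, given $\mu\in P(X)$ I pick $x_0$ avoiding the (countably many) relevant atoms and push a set of $\mu$-measure close to $1$ into a small neighborhood of $x_0$: a proper clopen set in the Cantor case, and a finite composition of coordinatewise squeezing homeomorphisms in the Hilbert cube case. For non-tameness, the shift $\sigma$ of $\{0,1\}^\Z$ belongs to $\H(C)$, and, after embedding $\{0,1\}^\Z$ as a $Z$-set and extending, also to $\H([0,1]^\N)$; the metric cascade it generates has enveloping semigroup mapping onto $E(\sigma,\{0,1\}^\Z)\cong\beta\N$, so by Theorem \ref{D-BFT} that cascade is not tame, and since tameness passes to subgroups, $(G,X)$ is not tame. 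Finally, being minimal and strongly proximal, $(G,X)$ is a factor of $M_{sp}(G)$; if $M_{sp}(G)$ were tame so would be its factor $(G,X)$, a contradiction, so $M_{sp}(G)$ — hence its subsystem of point masses inside $I\!A(G)=P(M_{sp}(G))$ — is not tame, and $G$ is not convexly intrinsically tame.

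For (5) let $G$ be semisimple with finite center and no compact factors. Its universal strongly proximal minimal flow $M_{sp}(G)$ is the Furstenberg maximal boundary $G/P$, with $P$ a minimal parabolic, which is nontrivial; hence $G$ is nonamenable. Since $I\!A(G)=P(M_{sp}(G))=P(G/P)$ and $P(\cdot)$ preserves tameness, it suffices to show $(G,G/P)$ is tame. I would embed $G/P$ $G$-equivariantly into a finite product $\prod_i\mathbb{P}(V_i)$ of projectivized fundamental representations (via highest weight orbits / Pl\"{u}cker coordinates); each factor $(G,\mathbb{P}(V_i))$ is tame because $(GL(V_i),\mathbb{P}(V_i))$ is tame by Akin's computation quoted in Example \ref{e:tameNOThns}.1 and tameness descends to the subgroup $G$. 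As tameness is closed under products and subsystems, $G/P$ is tame, so $I\!A(G)$ is tame and $G$ is convexly intrinsically tame.

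The main obstacles are the two genuinely geometric steps. First, the strong proximality of $(\H([0,1]^\N),[0,1]^\N)$: one must concentrate an arbitrary Borel probability measure near a single point using only homeomorphisms of the cube, which I would achieve by squeezing finitely many coordinates towards a non-atomic value and controlling the remaining coordinates in the product topology. Second, in (5), the $G$-equivariant embedding of the maximal boundary into a product of projective spaces together with the upgrade of Akin's tameness statement from the standard representation to arbitrary fundamental representations; everything else is bookkeeping with the permanence properties of tameness and the cited structure theorems.
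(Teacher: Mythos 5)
Your proposal follows the same skeleton as the paper's proof in all five items: exhibit an explicit minimal, strongly proximal system for each group, decide its tameness, and translate back through $I\!A(G)=P(M_{sp}(G))$ using the permanence properties of tameness (subsystems, factors, products, and the fact that $P(X)$ is tame when $X$ is). Items (2) and (4) are essentially identical to the paper's argument. Within the common skeleton you make three genuine substitutions. In (1) you deduce amenability of $S_\infty$ from the invariant measure on $M(S_\infty)=LO(\N)$ (recorded in the proof of Theorem \ref{t:intrinsic}.7) by pushing it forward to every minimal flow, instead of citing de la Harpe \cite{Harpe}; this is a valid, self-contained alternative. In (3) the paper gets non-tameness of $(\H([0,1]^{\N}),[0,1]^{\N})$ from the universality of this action for all Polish groups \cite{MeNZ}, whereas you embed $\{0,1\}^{\Z}$ as a Z-set, extend the shift by the Z-set extension theorem, and invoke non-tameness of the full shift together with heredity of tameness under subgroup restriction and subsystems; this is more elementary and avoids the universality machinery. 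In (5) the paper quotes Ellis and Akin to assert that $E(G,G/P)$ is Rosenthal and applies Theorem \ref{D-BFT}, while you embed the flag manifold $G$-equivariantly into a finite product of projectivized representations and use Akin's tameness of projective actions plus closure under products, subsystems and subgroup restriction; your version is arguably more careful (Ellis's and Akin's results concern projective spaces and spheres, not flag manifolds), at the cost of importing the standard Tits/Pl\"{u}cker embedding.

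The one genuine gap is your strong proximality argument for the Hilbert cube in (3). Coordinatewise homeomorphisms cannot do the job: every homeomorphism of $[0,1]$ maps the endpoint set $\{0,1\}$ to itself, so mass sitting on the pseudo-boundary of $[0,1]^{\N}$ is trapped there coordinate by coordinate. Concretely, for $\mu=\frac12\delta_{(0,0,\dots)}+\frac12\delta_{(1,1,\dots)}$ the orbit of $\mu$ under the product group $\prod_i \H[0,1]$ consists of the measures $\frac12\delta_a+\frac12\delta_{a'}$ with $a,a'$ opposite corners, and its closure contains no point mass; note that $\frac12(\delta_0+\delta_1)$ is even an invariant measure for $\H[0,1]$ acting on $[0,1]$, so no amount of squeezing along coordinates helps. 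To repair this step you must use homeomorphisms of the cube that are not products --- for instance Keller's homogeneity theorem or Z-set unknotting, to move mass supported on faces into the pseudo-interior --- and then iterate inside the orbit closure (which is legitimate, since $\overline{G\nu}\subset\overline{G\mu}$ whenever $\nu\in\overline{G\mu}$). The claim itself is true, and the paper only says it ``can be easily checked,'' but the specific mechanism you propose fails on such measures, so this step needs to be redone.
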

\begin{proof}
	(1) $S_{\infty}$ is  amenable, \cite{Harpe}.  
	It is not intrinsically tame by Theorem \ref{t:intrinsic}.7.
	
	(2) 
	%1612 what is $IA_t(G)$ for G=\H(C) or for $G=\H([0,1]^{\N})$ ?
	%probably trivial system ? 
	% % 
	Natural action of $\H(C)$ on the Cantor set $C$ is minimal and strongly proximal, 
	but this action is not tame; it contains, as a subaction, a copy of the full shift $(\Z,C) \cong (\sigma,\{0,1\}^\Z)$.
	
	(3)  The group $\H([0,1]^{\N})$ is a universal Polish group (see Uspenskij \cite{UspUn}).  
	It is not convexly intrinsically tame.
	This can be established by observing that the action of this group on the Hilbert cube is minimal, 
	strongly proximal and not tame.
	The strong proximality of this action can be
	easily checked.
	%0403
	% derived from a result of Gutman \cite{Gutman}
	% which asserts that the space of maximal chains of continua on the
	% Hilbert cube is minimal and strongly proximal.
	The action is not tame because it is a \emph{universal action} (see \cite{MeNZ}) 
	for all Polish groups on compact metrizable spaces.
	
	(4) 
	The (universal) minimal $G$-system $\T$ for $G=\H_+(\T)$ is strongly proximal. 
	Hence, $I\!A(G)$ in this case is easily computable
	and it is exactly $P(\T)$ which, as a $G$-system, is tame (by Theorem \ref{t:intrinsic}.4). 
	Thus, $\H_+(\T)$ is a (convexly) intrinsically tame. 
	%280817 is this explanation OK ? Or maybe you have a direct reference ? 
	%121217
%	For the nonamenability of $\H_+(\T)$ note that it contains Fuchsian (hence, discrete) noncyclic free subgroups. 
%%

(5) 
By Furstenberg's result \cite{Furst-63-Poisson} 
the universal minimal strongly proximal system
$M_{sp}(G)$ is the homogeneous space $X=G/P$, where $P$ is a minimal parabolic 
subgroup (see \cite{Gl-book1}).
Results of Ellis \cite{Ellis93} and Akin \cite{Ak-98} (Example \ref{e:tameNOThns}.1) 
show that the enveloping semigroup $E(G,X)$ in this case is a Rosenthal compact space, 
whence the  system $(G,X)$ is tame by the dynamical BFT dichotomy (Theorem \ref{D-BFT}). 	
	\end{proof}

%280817 according to your explanation (as I remember it) 
In particular, for $G=SL_2(\R)$ note that 
in any compact \textit{affine} $G$-space we can find either a
	1-dimensional real projective $G$-space
	(a copy of the circle) or a fixed point.  
	% $IA(G)=P(K)$ -- probablity measures on $K$, where  $K$ 
	%is the 1-dimensional real projective space $\simeq \T$ = circle. 	
For general $SL_n(\R)$, $n \geq 2$ -- flag manifolds and their $G$-quotients. 
%%  

 %280817  So, we have a good loc. compact distingusihing example $SL_n(\R)$. 
 %What about discrete G? 
 %Question: Is it true that there exists a nonamenable but conv-int-tame 
 %DISCRETE group $G$? 
 %Eli, maybe it is worth to pose this question ? 

\bibliographystyle{amsplain}

\end{document}